\begin{document}

\renewcommand{\theenumi}{\rm (\roman{enumi})}
\renewcommand{\labelenumi}{\rm \theenumi}

\newtheorem{thm}{Theorem}[section]
\newtheorem{defi}[thm]{Definition}
\newtheorem{lem}[thm]{Lemma}
\newtheorem{prop}[thm]{Proposition}
\newtheorem{cor}[thm]{Corollary}
\newtheorem{exam}[thm]{Example}
\newtheorem{conj}[thm]{Conjecture}
\newtheorem{rem}[thm]{Remark}
\allowdisplaybreaks

\title{Recurrence of direct products of diffusion processes in random media having zero potentials}

\author{Daehong Kim\footnote{The first named author is partially supported by a Grant-in-Aid for
Scientific Research (C)  No.~17K05304 from Japan Society for the Promotion of Science.}~ and Seiichiro Kusuoka\footnote{The second named author is partially supported by a Grant-in-Aid for Scientific Research (C) No.~17K14204 from Japan Society for the Promotion of Science.}}

\date{\empty}

\maketitle
\begin{abstract}
In this paper, we consider the recurrence of some multi-dimensional diffusion processes in random environments including zero potentials.
Previous methods on diffusion processes in random environments are not applicable to the case of such environments.
In main theorems, we obtain a sufficient condition to be recurrent for the product of a multi-dimensional diffusion process in semi-selfsimilar random environments and one-dimensional Brownian motion, and also more explicit sufficient conditions in the case of Gaussian random environments and random environments generated by L\'evy processes.
To prove them, we introduce an index which measures the strength of recurrence of symmetric Markov processes, and give some sufficient conditions for recurrence of direct products of symmetric diffusion processes.
The index is given by the Dirichlet forms of the Markov processes.
\end{abstract}
{\bf Keywords}~~Direct products of diffusion processes, Dirichlet forms, Random environment, Recurrence
\vskip 0.5cm
\noindent
{\bf Mathematics Subject Classification (2010)}~~Primary 60K37, secondary 60J60, 60G60, 31C25 
%

\section{Introduction}\label{sec:intro}
Global properties of stochastic processes as well as related problems are important topics in both probability and potential theories. Among those, recurrence and transience of Markov processes have been studied by many authors under various probabilistic and analytic aspects in discrete and in continuous time. For instance, it is well-known that a $d$-dimensional Brownian motion consisting of $d$ independent one-dimensional standard Brownian motions is recurrent if $d=1, 2$, and transient otherwise. For more general diffusion processes, we have also many criteria for their recurrence and transience, but the criteria are not always so easy to be checked. In general, whether diffusion processes are recurrent or transient depends on their generators (see \cite{FOT}, \cite{ICHI}, \cite{IW}). In this spirit, Ichihara \cite{ICHI} gave elegant criteria for the recurrence and transience of the diffusion process associated with a second order elliptic partial differential operator $L$ on $\mathbb{R}^d$ defined by
\begin{equation}\label{Elliptic}
L=\sum_{i,j=1}^d\frac{\partial}{\partial x_i}\left(a_{ij}(x)\frac{\partial}{\partial x_j}\right),
\end{equation}
where $a_{ij}(x)$ is a symmetric coefficient function such that the matrix $A(x):=(a_{ij}(x))_{1\le i,j \le d}$ is strictly positive definite on $\mathbb{R}^d$. 

Let ${\mathcal W}$ be the space of locally bounded Borel measurable functions on $\mathbb{R}^d$ vanishing at the origin and let ${\mathcal Q}$ be a probability measure on ${\mathcal W}$. In the present paper, an element of ${\mathcal W}$ is called an environment. Given an environment $w$, consider ${\bf Y}^w=(Y_w(t), P_x^w, x \in \mathbb{R}^d)$, the diffusion process with generator
\begin{equation*}
\frac{1}{2}(\Delta - \nabla w \cdot \nabla)=\frac{1}{2}e^w\sum_{k=1}^d\frac{\partial}{\partial x_k}\left(e^{-w}\frac{\partial}{\partial x_k}\right).
\end{equation*}
It is well-known that $Y_w(t)$ can be constructed from the diffusion process $X_w(t)$ associated with (\ref{Elliptic}) provided $a_{ij}=\frac{1}{2}\delta_{ij}e^{-w}$ through a random time change of $X_w(t)$. 
We call a stochastic process ${\mathcal Y}^w=(Y_w(t), {\mathcal Q}\otimes P_x^w, x \in \mathbb{R}^d)$ the diffusion process in a random environment. In the case where $d=1$ and $(w, {\mathcal Q})$ is a Brownian environment, Brox \cite{Brox} noticed that the process ${\bf Y}^w$ is a continuous version of Sinai's walk (see \cite{Sinai}) and showed that $Y_w(t)$ moves very slowly in some sense by the effect of the environment. Later, Brox's result was extended to a multi-dimensional diffusion process in a non-negative L\'evy's Brownian environment (see \cite{KIM}, \cite{Mathieu}).

Recurrence and transience of multi-dimensional diffusion processes in various random environments have been studied by many authors, in combining Ichihara's criteria with the ergodic aspects of measure preserving transformations on the random environments. 
The first result on this problem was obtained by Fukushima et al. in a one-dimensional Brownian environment (see \cite{FNT}). Tanaka considered the diffusion process ${\bf Y}^w$ in a L\'evy's Brownian environment and proved that it is to be recurrent for almost all environments in any dimension (see \cite{Tanaka93}), which made the effect of random environments on this problem quite transparent. After that, Tanaka's result was extended to a large class of multi-dimensional random environments (see \cite{KTT1}, \cite{KTT3}, \cite{Takahashi04}, \cite{TakaTamu}). In \cite{KTT3}, the authors considered multi-dimensional diffusion processes in multi-parameter random environments and studied their recurrence and transience. More precisely, the authors obtained some conditions for the dichotomy of recurrence and transience for $d$-dimensional diffusion process $Y_w(t)=(Y_{w}^1(t), Y_{w}^2(t), \cdots, Y_{w}^d(t))$ corresponding to the generator
\begin{equation}\label{directgenerator}
\frac{1}{2}\sum_{k=1}^de^{w(x_k)}\frac{\partial}{\partial x_k}\left(e^{-w(x_k)}\frac{\partial}{\partial x_k}\right) ,
\end{equation}
where $w$ is a one-dimensional (semi-)stable L\'evy process whose values at different $d$ points are regarded as constituting a multi-parameter environment. In their proof, the following property of the environments was crucial: for any $a_0 >0$ and $\theta \ge 1$
\begin{equation}\label{condition}
{\mathcal Q}\left( \inf_{\sigma \in S^{d-1}}\sum_{j=1}^dw(\theta \sigma_{j}) >a_0\right)>0,
\end{equation}
where $S^{d-1}$ denotes the unit sphere in $\mathbb{R}^d$. It turned out that the property (\ref{condition}) works well with Ichihara's test in studying the recurrence and transience of $Y_w(t)$. 
However, the property (\ref{condition}) does not hold if one component of $w$ takes value identically zero. Indeed, for the two-dimensional direct product of diffusion process $(Y_w^1(t), B(t))$ given by the pair of the Brox's diffusion $Y_w^1(t)$ and a one-dimensional Brownian motion $B(t)$ independent of $Y_w^1(t)$, let $\widetilde{w}$ be the environment relative to $(Y_w^1(t), B(t))$. Then $\widetilde{w}(x_1,x_2)=w(x_1)$ and hence $\widetilde{w}(\sigma )=w(0)=0$ for $\sigma :=(0,1) \in S^1$. 
In this sense, a $(d+1)$-dimensional diffusion process in $d$-parameter random environments 
\begin{equation}\label{directproc}
(Y_w(t),B(t)):=(Y_w^{1}(t), Y_w^{2}(t), \cdots, Y_w^{d}(t), B(t))
\end{equation}
with a one-dimensional Brownian motion $B(t)$ independent of $\{Y_w^{j}(t), j=1,2,\cdots, d\}$ is out of the framework of \cite{KTT3} (also of \cite{TakaTamu}).  

The purpose of this paper is to study the recurrence of some multi-dimensional diffusion processes in random environments including zero potentials. For this, we introduce a criterion for the recurrence of direct products of symmetric Markov processes motivated by Okura \cite{Ok}.
In the criterion, the index induced by the Dirichlet forms plays an important role as representing the strength of recurrence of the associated Markov processes.  
The criterion works well in the case of diffusion processes in random environments, and we are able to show that the diffusion processes in semi-selfsimilar random environments have very strong recurrence in sense of the index.
As a result, we can show the recurrence of direct products of Markov processes given by the pair of a $d$-dimensional diffusion process in almost all environments having usual randomness, and a one-dimensional Brownian motion (see Theorems \ref{thm:RE1}, \ref{thm:Gauss} and \ref{thm:Levy}).

\section{Main results}\label{sec:thms}
Now, we state our framework and the main results of the present paper. 

Let ${\mathcal W}$ be the space of locally bounded and Borel measurable functions on ${\mathbb R}^{d}$ with the topology generated by the uniform convergence on compact sets.
Let ${\mathscr B}({\mathcal W})$ be the Borel $\sigma$-field of ${\mathcal W}$ and ${\mathcal Q}$ be a probability measure on $({\mathcal W}, {\mathscr B}({\mathcal W}))$. We call an element $w \in {\mathcal W}$ an environment and assume that ${\mathcal Q}(w(0)=0) =1$.
For given $w\in {\mathcal W}$, let ${\bf X}^w=(X_w(t), P_x^w, x \in \mathbb{R}^d)$ be the diffusion process associated with the generator \eqref{Elliptic} provided $a_{ij}=\frac{1}{2}\delta_{ij}e^{-w}$, equivalently, associated with the strongly local Dirichlet form $({\mathcal E}^{w}, {\mathcal F}^w)$ defined by
\begin{equation}\label{eq:DFw}
\left\{
\begin{array}{ll}
\displaystyle {\mathcal F}^w  := \left\{ f\in L^2({\mathbb R}^d; e^{-w}{\rm d}x ) : \frac{\partial f}{\partial x^i} \in L^2({\mathbb R}^d; e^{-w}{\rm d}x), i=1,2,\dots ,d \right\} ,\\%
\displaystyle {\mathcal E}^w (f,g) \displaystyle := \frac 12 \int _{{\mathbb R}^d} \nabla f (x) \cdot \nabla g(x)\, e^{-w(x)} {\rm d}x, \quad f,g \in {\mathcal F}^w,
\end{array}
\right.
\end{equation}
where the derivatives ${\partial f}/{\partial x_i}$ are taken in the sense of Schwartz distributions. 

For $r >1$ and $\alpha >0$, let $T$ be a mapping from ${\mathcal W}$ to ${\mathcal W}$ defined by $Tw(x) = r^{-\alpha} w(rx)$ for $x\in {\mathbb R}^d$. We assume that
\begin{equation}\label{eq:selfsimilar}
{\mathcal Q}(A) = {\mathcal Q}(TA) ,\quad A\in {\mathscr B}({\mathcal W}),
\end{equation}
which implies that $T$ is a measure preserving transformation of ${\mathcal Q}$.
We call a space $({\mathcal W}, {\mathscr B}({\mathcal W}), {\mathcal Q})$ satisfying the condition (\ref{eq:selfsimilar}) an $\alpha$-semi-selfsimilar random environment.

Our first result concerns a sufficient condition to be recurrent for the direct product of a $d$-dimensional diffusion process in semi-selfsimilar random environments and a one-dimensional Brownian motion.

\begin{thm}\label{thm:RE1}
Assume that $T$ is weakly mixing and
\begin{align}\label{assump;RE1}
{\mathcal Q} \left(\overline{w}(1, r^2) -\underline{w}(0,1) <2\underline{w}(1, r^2) \right) >0,
\end{align}
where $\overline{w}$ and $\underline{w}$ are given by
$$
\overline{w}(a,b):=\sup_{a \le |x| \le b}w(x), \quad \underline{w}(a,b):=\inf_{a \le |x| \le b}w(x)
$$
for $a, b \ge 0$.
Then, the $(d+1)$-dimensional direct product of diffusion process $(X_{w}(t), B(t))$ given by the pair of the $d$-dimensional diffusion process $X_{w}(t)$ and a one-dimensional Brownian motion $B(t)$ independent of $X_{w}(t)$ is recurrent for almost all environments.
\end{thm}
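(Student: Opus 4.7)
The plan is to deploy the general Dirichlet-form criterion for recurrence of direct products (which the authors develop earlier in the paper) together with the ergodic theory of the semi-selfsimilar transformation $T$. That criterion will say that $(X_w, B)$ is recurrent whenever an index measuring the strength of recurrence of $X_w$ and the analogous index for $B$ combine past a critical threshold. The one-dimensional Brownian motion $B$ has an explicit index that I can read off immediately from its Dirichlet form $\tfrac12\int_{\mathbb{R}}(f'(y))^2\,dy$, so the real task is producing an almost-sure lower bound on the index of $X_w$.

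I would estimate this index scale by scale. Using radial cut-off functions $u_n$ equal to $1$ on the ball of radius $r^{2(n-1)}$ and vanishing outside the ball of radius $r^{2n}$, the Dirichlet energy $\mathcal{E}^w(u_n, u_n) = \tfrac12\int |\nabla u_n|^2 e^{-w}\,dx$ on the annulus $\{r^{2(n-1)} \le |x| \le r^{2n}\}$ is controlled from above by an expression in $\overline{w}(r^{2(n-1)}, r^{2n})$, while the $L^2(e^{-w}\,dx)$-mass on the inner ball is controlled from below in terms of $\underline{w}(0, r^{2(n-1)})$ and $\underline{w}(r^{2(n-1)}, r^{2n})$. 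Semi-selfsimilarity $Tw(x) = r^{-\alpha}w(rx)$ (applied $2(n-1)$ times) makes the scale-$n$ quantities equal in distribution, up to an $\alpha$-rescaling, to the scale-$1$ quantities $\overline{w}(1, r^2)$, $\underline{w}(0,1)$ and $\underline{w}(1, r^2)$ that appear in the hypothesis \eqref{assump;RE1}.

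Since $T$ is weakly mixing, hence ergodic, the Birkhoff ergodic theorem applied to the indicator of the event in \eqref{assump;RE1} guarantees that for $\mathcal{Q}$-a.e.\ $w$ this event is realised at a positive density of scales $n$. Summing the per-scale contributions from the good scales yields an almost-sure lower bound on the index of $X_w$ large enough to push the sum of indices past the threshold in the product criterion, which then gives recurrence of $(X_w, B)$ for $\mathcal{Q}$-a.e.\ $w$.

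The main obstacle is calibrating the Dirichlet-form index so that the inequality in \eqref{assump;RE1} at scale $1$ translates precisely into the per-scale contribution that the index needs, and so that the "bad" scales (on which the inequality fails) do not overwhelm the good ones in the final capacity-type estimate. Once the index is set up so that contributions from disjoint annuli decouple correctly, the remaining steps reduce to a careful combination of the cut-off construction, the scaling identities coming from $T$, and the ergodic theorem.
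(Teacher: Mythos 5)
Your architecture coincides with the paper's: the product criterion of Proposition \ref{thm:criterion}, radial cut-offs supported on annuli at geometric scales, semi-selfsimilarity to pull scale $n$ back to scale $1$, and the weak mixing of $T$ to produce infinitely many good scales for $\mathcal{Q}$-a.e.\ $w$ (the paper uses Lemma \ref{prop:mixing}; a positive density of good scales is more than is needed). However, the step you defer as ``the main obstacle'' is the actual content of the proof, and your sketch of it (``summing the per-scale contributions from the good scales'') points toward a capacity-sum test that is not what the criterion asks for. Proposition \ref{thm:criterion} requires that $\frac{1}{k}\prod_i \bigl(u^{(i)}_{n^{(i)}(k)}, u^{(i)}_{n^{(i)}(k)}\bigr)_{m^{(i)}} \to 0$, where $n^{(i)}(k)$ is the \emph{first} scale at which the Rayleigh quotient of the $i$-th factor drops below $1/k$; nothing is summed over scales. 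For the one-dimensional Brownian factor the $L^2$ mass at that critical scale is of order $k^{1/2}$ and no better (Corollary \ref{ex:Bm}), so the entire burden is to show that the environment factor satisfies $\int_{\mathbb{R}^d} |u_{n_{\ell(k)}^w}|^2 e^{-w}\,{\rm d}x = o(k^{1/2})$, and it is not explained how your outline would produce this.

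This is precisely where the factor $2$ in \eqref{assump;RE1} enters, via two separate estimates at the critical scale $n$. First, the definition of the critical index together with the lower bound $\underline{w} > a r^{\alpha(n-1)}$ on the annulus gives $\|u_n\|^2_{e^{-w}{\rm d}x} \lesssim k\, r^{(d-2)n} e^{-a r^{\alpha(n-1)}}$ (Lemma \ref{lem:w3}, via comparison with the previous scale, where the Rayleigh quotient still exceeds $1/k$). Second, the same definition together with the upper bound $\overline{w}(r^{n},r^{n+1}) - \underline{w}(0,r^{n+1}) < b\, r^{\alpha(n-1)}$ gives $k \lesssim r^{2n} e^{b r^{\alpha(n-1)}}$. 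Combining with exponent $\gamma = 1/2$ yields $k^{-1/2}\|u_n\|^2_{e^{-w}{\rm d}x} \lesssim r^{(d-1)n}\exp\left(\left(\tfrac{b}{2}-a\right) r^{\alpha(n-1)}\right)$, which tends to $0$ exactly because $b < 2a$; this is the calibration against the Brownian factor's $k^{1/2}$. You also need the preliminary reduction (a union over rational $\widetilde a$) converting the event in \eqref{assump;RE1} into one with a fixed quantitative gap $b \le 2a - 4\varepsilon$, and the observation that the infimum defining the critical index may be restricted to the subsequence of good scales. Since none of this appears in your outline, what you have is a correct plan of attack (matching Propositions \ref{thm:criterion} and \ref{prop:RE}) rather than a proof.
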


It is known that random environments make a strong effect for the recurrence of the diffusion process in random environments.
Indeed, diffusion processes in various random environments are recurrent in any dimension under some natural conditions (cf. \cite{KTT1}, \cite{KTT3}, \cite{Takahashi04}, \cite{TakaTamu}). 
However, we cannot apply the previous methods to prove Theorem \ref{thm:RE1}, because the component $B(t)$ of the product process $(X_{w}(t), B(t))$ has no effect of environments. As an alternative to the previous method, 
we employ a new criterion for the recurrence of direct products of symmetric Markov processes based on the theory of Dirichlet forms (Proposition \ref{thm:criterion} below). The criterion will be applied to diffusion processes in random environments together with Proposition \ref{prop:RE}, which plays a key role for the proof of Theorem \ref{thm:RE1}. 

We remark that
the $(d+d')$-dimensional direct product process $(X_{w}(t), B^1(t), \dots , B^{d'}(t) )$ given by the pair of  $X_w(t)$
and a $d'$-dimensional Brownian motion $(B^1(t), \dots , B^{d'}(t) )$ independent of $X_{w}(t)$, is transient whenever $d'\ge 3$, because the marginal $(B^1(t), \dots , B^{d'}(t) )$ is transient.
To our best knowledge, the case $d'=2$ is an open problem but we believe that it will be transient in view of the result in  discrete cases. 
In fact, a similar problem was already concerned in discrete cases. In \cite{Devulderetal}, the authors considered $d+d'$ independent walks on $\mathbb{Z}$, $d$ of them performing Sinai's walk and $d'$ of them performing simple symmetric random walk, and proved that the direct product is recurrent almost all environments if and only if $d'\le 1$, or $d=0$ and $d'=2$. 

Next, we consider specific laws of random environments.
Precisely, we show some sufficient conditions in the cases that Gaussian random environments and random environments generated by L\'evy processes. Such environments are concerned in \cite{KTT1} and \cite{KTT3}, respectively.
By giving some assumptions on laws of environments we are able to discuss clearer sufficient conditions for the recurrence as follows, while the sufficient condition given in Theorem \ref{thm:RE1} is somewhat abstract.

Let us consider a probability measure ${\mathcal Q}$ on $({\mathcal W}, {\mathscr B}({\mathcal W}))$ as a Gaussian measure. 
We assume that ${\mathcal Q}(w(0)=0)=1$ and $E^{\cal Q}[w(x)]=0$ for $x\in {\mathbb R}^d$. Here $E^{\cal Q}$ stands for the expectation with respect to ${\cal Q}$. 
Let $K$ be the covariance kernel of ${\mathcal Q}$, that is, $K(x,y) = E^{\cal Q}[w(x)w(y)]$ for $x,y\in {\mathbb R}^d$. Since ${\mathcal Q}$ is a probability measure on $({\mathcal W}, {\mathscr B}({\mathcal W}))$, $K$ is a measurable function on ${\mathbb R}^d\times {\mathbb R}^d$.

\begin{thm}\label{thm:Gauss}
Assume that
\begin{align}
\label{eq:thmGauss1} &\sup _{1\le |x| \le r^2} \int _{|y|\leq r^2} K(x,y) {\rm d}y -\inf _{|x|\le 1} \int _{|y|\leq r^2} K(x,y) {\rm d}y <2\inf _{1\le |x|\le r^2} \int _{|y|\leq r^2} K(x,y) {\rm d}y ,\\
\label{eq:thmGauss2}& \lim _{n\rightarrow \infty} r^{-\alpha n} \sup _{x,y\in D_1} K(r^n x,y) =0.
\end{align}
Then, the $(d+1)$-dimensional direct product of diffusion process $(X_{w}(t), B(t))$ given by the pair of $X_{w}(t)$ and a one-dimensional Brownian motion $B(t)$ independent of $X_{w}(t)$ is recurrent for almost all environments.
\end{thm}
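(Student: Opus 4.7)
My plan is to deduce Theorem~\ref{thm:Gauss} from Theorem~\ref{thm:RE1} by verifying its two hypotheses---the weak mixing of $T$ and the positivity statement~(\ref{assump;RE1})---under the Gaussian assumptions~(\ref{eq:thmGauss1}) and~(\ref{eq:thmGauss2}). Throughout I take (\ref{eq:selfsimilar}) as part of the standing framework; for a centered Gaussian ${\mathcal Q}$ this is equivalent to the covariance scaling $K(rx,ry) = r^{2\alpha} K(x,y)$, which will be used to propagate properties from $D_1$ to all of ${\mathbb R}^d$.

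For the weak mixing of $T$ on $({\mathcal W}, {\mathcal Q})$ I use the Wiener chaos decomposition: for a centered Gaussian measure, mixing of a measure-preserving map reduces by Wick's formula and density of polynomials to the decay of covariances between first-chaos generators $w(y)$ and $T^n w(x) = r^{-\alpha n} w(r^n x)$. Direct computation yields
\begin{equation*}
E^{\mathcal Q}\bigl[ w(y)\, T^n w(x) \bigr] = r^{-\alpha n} K(r^n x, y),
\end{equation*}
and (\ref{eq:thmGauss2}) says this tends to $0$ uniformly on $x,y\in D_1$. For arbitrary $x,y \in {\mathbb R}^d$, choose $k$ with $r^{-k}x, r^{-k}y \in D_1$; the scaling identity gives $K(r^n x, y) = r^{2\alpha k} K(r^n (r^{-k} x), r^{-k} y)$, so the decay extends pointwise to all of ${\mathbb R}^d$. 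Mixing, and hence weak mixing of $T$, then follows.

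For (\ref{assump;RE1}) I would use Gaussian regression. Set
\begin{equation*}
Z := \int_{|y|\leq r^2} w(y)\,{\rm d}y,\qquad V := \mathrm{Var}(Z) >0,\qquad F(x) := E^{\mathcal Q}\bigl[ w(x) Z \bigr] = \int_{|y|\leq r^2} K(x,y)\,{\rm d}y,
\end{equation*}
and decompose $w(x) = (F(x)/V) Z + \widetilde w(x)$ with $\widetilde w$ centered Gaussian, independent of $Z$, and almost surely locally bounded. Writing $M := \sup_{|x|\leq r^2} |\widetilde w(x)|$ and working on $\{Z \geq 0\}$ with $F$ non-negative on the three relevant regions, one obtains
\begin{align*}
\overline w(1,r^2) - \underline w(0,1) &\leq \Bigl( \sup_{1\leq |x|\leq r^2} F(x) - \inf_{|x|\leq 1} F(x) \Bigr) \frac{Z}{V} + 2M, \\
2\underline w(1,r^2) &\geq 2\inf_{1\leq |x|\leq r^2} F(x) \cdot \frac{Z}{V} - 2M.
\end{align*}
Hypothesis (\ref{eq:thmGauss1}) is precisely the statement that the coefficient of $Z/V$ on the first line is strictly smaller than that on the second, so on the independent positive-probability event $\{Z \in [\lambda, \lambda+1]\} \cap \{M \leq M_0\}$ with $\lambda$ taken large and $M_0$ fixed, the inequality in (\ref{assump;RE1}) holds; hence ${\mathcal Q}$ assigns positive mass to the event in (\ref{assump;RE1}), and Theorem~\ref{thm:RE1} concludes the proof. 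The delicate step I expect to be the main obstacle is this positivity argument: one must verify $V>0$ (which follows since (\ref{eq:thmGauss1}) forces $F \not\equiv 0$), justify the almost sure local boundedness of $\widetilde w$ (inherited from $w \in {\mathcal W}$ and the local boundedness of $F$), and track signs carefully in case $F$ fails to be non-negative, choosing the sign of $\lambda$ consistently with that of the right-hand side of (\ref{eq:thmGauss1}).
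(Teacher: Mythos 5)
Your reduction is the same as the paper's---verify weak mixing of $T$ and the positivity condition \eqref{assump;RE1}, then invoke Theorem \ref{thm:RE1}---but your verification of \eqref{assump;RE1} takes a genuinely different route. The paper (Lemma \ref{lem:Gauss1}) observes that $F:=\int_{|y|\le r^2}K(\cdot,y)\,{\rm d}y$ lies in the Cameron--Martin space $\mathcal H$ of $\mathcal Q$ and invokes the support theorem, ${\mathcal Q}(\sup_x|w(x)-F(x)|<\varepsilon)>0$, so that on a small uniform ball around $F$ the strict inequality \eqref{eq:thmGauss1} for $F$ transfers to $w$ with a $4\varepsilon$ loss. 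You instead perform a one-dimensional Gaussian regression of $w$ on $Z=\int_{|y|\le r^2}w(y)\,{\rm d}y$ and condition on $Z$ large; this is in effect the decomposition of $w$ along the single Cameron--Martin direction $F$ plus an independent residual, and it replaces the full support theorem by the more elementary facts that $V>0$ (forced by \eqref{eq:thmGauss1}, which gives $\inf_{1\le|x|\le r^2}F>0$ since $F(0)=0$), that $\sup_{|x|\le r^2}|\widetilde w(x)|<\infty$ a.s.\ (for which you should note that local boundedness of $F$ follows from the Gaussian integrability of $\sup_{|x|\le r^2}|w(x)|$), and independence of the two conditioning events. Your sign worries are actually moot: $\sup_x(cF(x))=c\sup_xF(x)$ and $\inf_x(cF(x))=c\inf_xF(x)$ for $c=Z/V\ge 0$ regardless of the sign of $F$, so the displayed bounds hold verbatim on $\{Z\ge 0\}$. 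For the mixing hypothesis the paper simply cites Lemma 3.3 of \cite{KTT1}; your chaos-decomposition sketch is the standard proof of that lemma, though the extension of the covariance decay from $D_1$ to general $x,y$ needs two separate scaling exponents (one for $x$, one for $y$, since $K(r\cdot,r\cdot)=r^{2\alpha}K(\cdot,\cdot)$ scales both arguments jointly), not a single $k$ placing both points in $D_1$; this is easily repaired and is in any case internal to the cited lemma. Net effect: your argument is somewhat longer but more self-contained, avoiding the Gaussian support theorem at the cost of a few integrability checks.
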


Note that the condition \eqref{eq:thmGauss1} above is stronger than those in Theorem 3.5 in \cite{KTT1}, because we need the strength of the recurrence for $X_w(t)$ to show the recurrence of the direct product process.

As a direct consequence of Theorem \ref{thm:Gauss}, we have the following corollary.

\begin{cor}\label{cor:BroxBrown}
The two-dimensional direct product of diffusion process $(Y_w(t), B(t))$ given by the pair of the Brox's diffusion process $Y_w(t)$ and a one-dimensional Brownian motion $B(t)$ independent of $Y_w(t)$ is recurrent for almost all environments.
\end{cor}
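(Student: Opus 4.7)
The plan is to specialize Theorem~\ref{thm:Gauss} to the case where ${\mathcal Q}$ is the law of a two-sided standard Brownian motion on $\mathbb{R}$. With $d=1$, this is a centered Gaussian measure on ${\mathcal W}$ satisfying ${\mathcal Q}(w(0)=0)=1$, with covariance kernel $K(x,y)=\min(|x|,|y|)\,\mathbf{1}_{\{xy\ge 0\}}$ (the two halves of the two-sided Brownian motion are independent, so the opposite-sign contribution vanishes). Brownian scaling $w(r\,\cdot)\stackrel{d}{=}r^{1/2}w(\cdot)$ shows that $Tw(x)=r^{-1/2}w(rx)$ is a ${\mathcal Q}$-preserving transformation for every $r>1$, so the environment is $\tfrac12$-semi-selfsimilar; this leaves $r$ as a free parameter that can be tuned when checking \eqref{eq:thmGauss1}.

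Next, verify the two analytic hypotheses. Condition \eqref{eq:thmGauss2} is immediate: for $x,y\in D_1$ one has $0\le K(r^n x,y)\le |y|\le 1$, whence $r^{-n/2}\sup_{x,y\in D_1}K(r^n x,y)\le r^{-n/2}\to 0$. For \eqref{eq:thmGauss1}, a short calculation using the explicit form of $K$ yields
$$
\varphi(x):=\int_{|y|\le r^2}K(x,y)\,{\rm d}y=|x|\,r^2-\tfrac12 x^2,\qquad |x|\le r^2.
$$
Reading off the three extrema on the relevant ranges of $x$ gives $r^4/2$ (at $|x|=r^2$), $0$ (at $x=0$), and $r^2-\tfrac12$ (at $|x|=1$). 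Thus \eqref{eq:thmGauss1} reduces to the polynomial inequality $r^4-4r^2+2<0$, which holds on $(1,\sqrt{2+\sqrt 2})$. Picking any such $r$ is enough.

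Once both conditions are verified, Theorem~\ref{thm:Gauss} yields recurrence of $(X_w(t),B(t))$ for ${\mathcal Q}$-almost every environment $w$. To finish, one identifies the paper's $X_w(t)$—the diffusion associated with $(\mathcal{E}^w,\mathcal{F}^w)$ on $L^2(\mathbb{R};e^{-w}{\rm d}x)$, whose $L^2$-generator is $\tfrac12(\Delta-\nabla w\cdot\nabla)$—with Brox's diffusion $Y_w(t)$, giving the corollary. The step I expect to require the most care is confirming that the admissible window for $r$ in \eqref{eq:thmGauss1} is non-empty for this particular kernel (the arithmetic happens to work out, but it is not automatic), together with the clean identification $X_w=Y_w$ via the matching of symmetrizing measures; everything else is a routine application of the already-established Theorem~\ref{thm:Gauss}.
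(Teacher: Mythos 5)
Your verification of the hypotheses of Theorem~\ref{thm:Gauss} is exactly the paper's: the same kernel $K(x,y)=(|x|\wedge|y|)\,\mathbf{1}_{(0,\infty)}(xy)$, the same evaluation $\int_{|y|\le r^2}K(x,y)\,{\rm d}y=|x|r^2-\tfrac12x^2$ yielding the three extrema $r^4/2$, $0$ and $r^2-\tfrac12$, and the same admissible window $1<r<\sqrt{2+\sqrt2}$; the check of \eqref{eq:thmGauss2} is likewise routine (note only that $|y|\le r$, not $|y|\le 1$, on $D_1$, which changes nothing).

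Where you diverge is the final step, and it is the only non-routine point of the corollary. The paper does \emph{not} identify $X_w$ with $Y_w$: it regards Brox's diffusion as the time change of $X_w$ by the additive functional $\int_0^{\cdot}e^{-w(X_w(s))}\,{\rm d}s$, so that $Y_w$ is symmetric with respect to $e^{-2w}\,{\rm d}x$, and then invokes Corollary~\ref{maincor} --- the time-changed version of the product criterion --- together with the recurrence of $X_w$ to conclude. This extra step is not cosmetic: if $Y_w$ is a nontrivial time change of $X_w$, recurrence of $(X_w(t),B(t))$ does not transfer softly to $(Y_w(t),B(t))$, because $(X_w(\tau_t),B(t))$ runs its two coordinates on different clocks and is not a time change of the product process; Corollary~\ref{maincor} exists precisely to handle this, and using it requires re-checking \eqref{1kinner2} with $g^{(1)}=e^{-w}$ and $g^{(2)}=1$. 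Your shortcut --- reading \eqref{eq:DFw} literally as a form on $L^2(\mathbb{R};e^{-w}{\rm d}x)$, whose generator is $\tfrac12(\Delta-\nabla w\cdot\nabla)$, and declaring that process to be Brox's diffusion --- is defensible under the standard normalization of Brox's diffusion (scale $e^{w}$, speed $e^{-w}{\rm d}x$), and under that reading no time change is needed. But as written the identification $X_w=Y_w$ is asserted rather than argued; you should either justify it explicitly under your chosen normalization, or, if you adopt the paper's convention in which $Y_w$ has symmetrizing measure $e^{-2w}\,{\rm d}x$, supply the missing passage through Corollary~\ref{maincor}.
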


A similar problem was concerned in Proposition 3.1 in \cite{KTT3}, but the absolute value of the Brownian environment was taken in the component of the diffusion process in random environment.
The advantage of the method in the present paper is that we are able to show the recurrence without taking the absolute value.
Proofs of Theorem \ref{thm:Gauss} and Corollary \ref{cor:BroxBrown} are given in Section \ref{subsec:Gauss}.

Finally, we give the result on the case that random environments generated by L\'evy processes.
Let ${\mathcal W}$ be the space of functions $w$ on ${\mathbb R}$ satisfying the following: $w(0)=0$, $w$ is right (resp. left) continuous with left (resp. right) limits on $[0,\infty)$ (resp. $(-\infty, 0)$).
For $i=1,2,\dots , d$, we set a probability measure ${\mathcal Q}_i$ on $({\mathcal W}, {\mathscr B}({\mathcal W}))$ such that $(w(x), x \in [0,\infty), {\mathcal Q}_i)$ and $(w(-x), x \in  [0,\infty ), {\mathcal Q}_i)$ are independent semi-selfsimilar L\'evy processes with an exponent $\alpha _i \in (0,2]$ (see Definitions 13.4 and 13.12 in \cite{Sa}). 
Define the probability measure ${\mathcal Q}$ on $({\mathcal W}^d, {\mathscr B}({\mathcal W})^{\otimes d})$ by
\[
{\mathcal Q} := \bigotimes _{i=1}^d {\mathcal Q_i} .
\]
Denote the $i\,$th component of $w \in {\mathcal W}^d$ by $w^i$ and write $w_+^i (t) := w^i(t)$ and $w_-^i (t) := w^i(-t)$ for $t\in [0,\infty )$.

\begin{thm}\label{thm:Levy}
Let $i=1,2,\dots, d$. If $\alpha _i =2$ or both $w_+^i$ and $w_-^i$ have positive jumps with positive probabilities, then the $(d+1)$-dimensional direct product process  $(X_{w}(t), B(t))$ given by the pair of $X_{w}(t)$ and a one-dimensional Brownian motion $B(t)$ independent of $X_{w}(t)$ is recurrent for almost all environments.
\end{thm}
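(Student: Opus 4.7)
My plan is to reduce to the criterion underlying Theorem~\ref{thm:RE1}---namely Propositions~\ref{thm:criterion} and~\ref{prop:RE}---applied coordinate-by-coordinate, exploiting the tensor-product structure of both the environment and the diffusion. Since $w = (w^1, \ldots, w^d) \in \mathcal{W}^d$ has independent components under $\mathcal{Q}$, the generator \eqref{directgenerator} splits as a sum of one-dimensional generators, so $X_w(t) = (X_w^1(t), \ldots, X_w^d(t))$ is a direct product of independent one-dimensional diffusions $X_w^i$ in the scalar environments $w^i$, with Dirichlet forms decomposing as tensor products. It therefore suffices to bound the recurrence index of each factor in the sense of Proposition~\ref{thm:criterion} and then combine.

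For each $i$, I would verify the $d=1$ hypotheses of Theorem~\ref{thm:RE1} for $(w^i, \mathcal{Q}_i)$. The scaling $T_i w^i(x) := r^{-\alpha_i} w^i(rx)$ is $\mathcal{Q}_i$-measure-preserving by semi-selfsimilarity, and the independent-increments structure of $w^i$ readily yields weak mixing. The nontrivial step is the quantitative estimate \eqref{assump;RE1} for $w^i$. In the case $\alpha_i = 2$, $w^i$ is essentially a two-sided Brownian motion, and a direct Gaussian computation---analogous to verifying \eqref{eq:thmGauss1} in Theorem~\ref{thm:Gauss} specialized to one dimension---yields the required inequality with positive probability. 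In the case where $w_+^i$ and $w_-^i$ both have positive jumps with positive probability, I would construct, via the L\'evy--It\^o decomposition, a positive-probability event on which each of $w_+^i$, $w_-^i$ has a single large positive jump of size $\geq M$ occurring in a tiny initial interval $[1, 1+\varepsilon]$ and the remaining small-jump/continuous part stays within an $O(1)$-band on $[0, r^2]$. On this event, $\underline{w^i}(1, r^2) \geq M - O(1)$ and $\overline{w^i}(1, r^2) - \underline{w^i}(0,1) \leq M + O(1)$, so taking $M$ sufficiently large delivers \eqref{assump;RE1}.

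With \eqref{assump;RE1} established for every component, Proposition~\ref{prop:RE} supplies a strong lower bound on the recurrence index of each $X_w^i$, for $\mathcal{Q}_i$-a.e.\ $w^i$. Plugging these bounds into Proposition~\ref{thm:criterion} applied to $(X_w^1, \ldots, X_w^d, B)$---exactly as in the proof of Theorem~\ref{thm:RE1}, with the one-dimensional Brownian motion $B$ absorbing the residual slack in the combined index---yields the recurrence of the $(d+1)$-dimensional product process for $\mathcal{Q}$-a.e.\ $w$.

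The main obstacle is the positive-jump case: one must simultaneously control both half-lines of each L\'evy process $w^i$, so that the infima over $1 \le |x| \le r^2$ are each large, while $\overline{w^i}(1, r^2) - \underline{w^i}(0,1)$ remains bounded by essentially the same large jump. The delicate point is the careful book-keeping, via a suitable truncation of the L\'evy measure, between the single large positive jump on each side and the concentration of the remainder on $[0, r^2]$; this is where the hypothesis that \emph{both} $w_+^i$ and $w_-^i$ carry positive jumps is used in an essential way.
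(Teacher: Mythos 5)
Your overall strategy coincides with the paper's: factor $X_w$ into $d$ independent one-dimensional diffusions via $e^{-w(x)}\,{\rm d}x=\prod_{i=1}^d e^{-w^i(x^{(i)})}\,{\rm d}x^{(i)}$, verify the one-dimensional version of \eqref{assump;RE1} for each $(w^i,{\mathcal Q}_i)$ (weak mixing coming from independent increments, the Gaussian case being direct), feed the resulting bounds from Proposition \ref{prop:RE} with small exponents $\gamma_i$ into Proposition \ref{thm:criterion}, and let the $k^{1/2}$ bound of Corollary \ref{ex:Bm} for the Brownian factor fit into the remaining room. That decomposition and combination step is sound and is exactly what the paper does.

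The gap is in your construction of the positive-probability event in the jump case. You place the single large positive jump in the interval $[1,1+\varepsilon]$ and then claim $\underline{w}^i(1,r^2)\ge M-O(1)$. This is false: if the jump occurs at some $t_0\in(1,1+\varepsilon]$, then for $1\le x<t_0$ the path still equals its $O(1)$ small-jump/continuous part, so $\underline{w}^i(1,r^2)\le w^i(1)=O(1)$, while $\overline{w}^i(1,r^2)-\underline{w}^i(0,1)$ is of order $M$; the target inequality $\overline{w}^i(1,r^2)-\underline{w}^i(0,1)<2\,\underline{w}^i(1,r^2)$ then fails for large $M$ instead of holding. The large jump must be forced to occur inside $[0,1]$ on each half-line, so that $w^i(1)$ and $w^i(-1)$ are already large, and one then uses the independence of increments to confine $w^i_{\pm}$ to a narrow band around $w^i_{\pm}(1)$ throughout $[1,r^2]$, while keeping $\underline{w}^i(0,1)>-M$ on the same event. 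This is precisely the content of the paper's Lemma \ref{prop:Levy} (the event $\{\underline{w}^i(0,1)>-M\}\cap\{w^i(-1)\wedge w^i(1)>a\}$ has positive probability, built from a L\'evy--It\^o decomposition with a single truncation level) and Lemma \ref{lem:Levy} (band control on $[1,r^2]$ plus a two-sided window for $w^i_+(1)$, yielding $\underline{w}^i(1,r^2)>\widehat a$ and $\overline{w}^i(1,r^2)-\underline{w}^i(0,1)<\widehat a(1+\varepsilon)$ with positive probability). With the jump relocated to $[0,1]$ and that band argument supplied, the rest of your proof goes through as in the paper.
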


This result can be regarded as an extension of Theorem 1.2 (i) in \cite{KTT3}.
Theorem \ref{thm:Levy} implies that diffusion processes in random environments generated by L\'evy processes with positive jumps have very strong recurrence, and even taking a direct product of the diffusion process and a one-dimensional Brownian motion, the direct product process is still recurrent.
Proof of Theorem \ref{thm:Levy} is given in Section \ref{subsec:Levy}.
\vskip 0.2cm
The rest of the present paper is organized as follows. In Section \ref{sec:prod}, we give criteria for the recurrence of direct products of general symmetric Markov processes including a random time changed version, and prove some lemmas on diffusion processes in non-random environments. In Section \ref{sec:RE}, we give some sufficient conditions on the random environment for the recurrence of a multi-dimensional direct product process in an ergodic random environment. In Section \ref{sec:App}, we consider concrete examples for the result obtained in Section \ref{sec:RE} with Gaussian and stable L\'evy environments. 
For notational convenience, we let $a \wedge b:=\min\{a, b\}$ for any $a, b \in \mathbb{R}$.

\section{Recurrence of products of Dirichlet forms and some lemmas on diffusion processes in non-random environments}\label{sec:prod}

\subsection{Recurrence of products of Dirichlet forms}\label{sec:prod-1}
In this section, we give some analytic recurrence criteria for direct products of symmetric Dirichlet forms (or, of symmetric Markov processes). The result will be obtained by a simple obsevation for the recurrence of direct products of symmetric Markov processes due to \cite{FuO} and \cite{Ok}. 

For $i=1,2,\dots, N$, let $E^{(i)}$ be a locally compact separable metric space and $m^{(i)}$ be a positive Radon measure on $E^{(i)}$ with full support. Let $({\mathcal E}^{(i)}, {\mathcal F}^{(i)})$ be a symmetric regular Dirichlet form on $L^2(E^{(i)}, m^{(i)})$ possessing ${\mathcal C}^{(i)}$ as its core.  
It is well-known that $({\mathcal E}^{(i)}, {\mathcal F}^{(i)})$ generates a strongly continuous Markovian semigroup $(T_t^{(i)})_{t\ge 0}$ of symmetric operators on $L^2(E^{(i)},m^{(i)})$. 
Let ${\bf X}^{(i)}= (\Omega ^{(i)}, {\mathcal M}^{(i)}, X^{(i)}(t), P_{x^{(i)}}^{(i)})$ be the $m^{(i)}$-symmetric Hunt process associated to $({\mathcal E}^{(i)}, {\mathcal F}^{(i)})$. We say that $({\mathcal E}^{(i)}, {\mathcal F}^{(i)})$ (or ${\bf X}^{(i)}$) is irreducible if any $(T_t^{(i)})_{t\ge 0}$-invariant set $B$ satisfiess $m^{(i)}(B)=0$ or $m^{(i)}(E^{(i)}\setminus B)=0$. 

Let ${\bf X}= (\Omega , {\mathcal M}, X(t), P_x)$ be the process on $E$ defined by the product of ${\bf X}^{(i)}$, where 
\begin{align*}
&\Omega = \Omega ^{(1)} \times \Omega ^{(2)}\times \cdots \times \Omega ^{(N)}, \quad {\mathcal M} = {\mathcal M}^{(1)} \otimes {\mathcal M}^{(2)} \otimes \cdots \otimes {\mathcal M}^{(N)},\\
& E = E^{(1)} \times E^{(2)} \times \cdots \times E^{(N)},\\
&P_x = P_{x^{(1)}}^{(1)} \otimes P_{x^{(2)}}^{(2)} \otimes \cdots P_{x^{(N)}}^{(N)}, \quad x= (x^{(1)}, x^{(2)}, \dots , x^{(N)}) \in E,\\
&X(t,\omega ) = \left( X^{(1)}(t,\omega _1), X^{(2)}(t,\omega _2), \dots , X^{(N)}(t,\omega _N) \right) , \quad \omega =(\omega _1, \omega _2, \dots , \omega_N )\in {\Omega}.
\end{align*}
We note that the marginal processes $\{(X^{(i)}(t), t\ge 0), i=1,2, \dots, N\}$ are independent under $P_x$.
Let $m$ be the product measure of $\{ m^{(i)}, i=1,2, \dots, N\}$. Assume that ${\bf X}^{(i)}$ is irreducible for any $i=1,2,\dots, N$. 
Then, ${\bf X}$ is also to be an $m$-symmetric irreducible Markov process on $E$ (\cite[Proposition 3.1]{FuO}, \cite[Theorem 2.6]{Ok}). Let $({\mathcal E}, {\mathcal F})$ be the associated Dirichlet form of ${\bf X}$ on $L^2(E,m)$. Then $({\mathcal E}, {\mathcal F})$ possesses the linear span of ${\mathcal C}^{(1)} \otimes \dots \otimes {\mathcal C}^{(N)}:=\{\phi^{(1)}\otimes \cdots \otimes \phi^{(N)} : \phi^{(i)} \in {\mathcal C}^{(i)}, i=1,2,\dots, N\}$ as its core, where $(\phi^{(1)}\otimes \cdots \otimes \phi^{(N)})(x) := \phi ^{(1)} (x^{(1)}) \phi ^{(2)} (x^{(2)}) \dots \phi ^{(N)}(x^{(N)})$. Thus the Dirichlet form $({\mathcal E}, {\mathcal F})$ is to be regular and also admits the following expressions: for $u^{(i)} \in {\mathcal F}^{(i)}$ ($i=1,2,\dots, N$), $u:=u^{(1)}\otimes \cdots \otimes u^{(N)} \in {\mathcal F}$ and
\begin{equation}\label{eq:Dprod}
{\mathcal E}\left(u,u\right) = \sum_{i=1}^N {\mathcal E}^{(i)} \left(u^{(i)},u^{(i)}\right) \prod _{j=1, j\neq i}^N \left(u^{(j)},u^{(j)} \right) _{m^{(j)}}
\end{equation}
where $(\cdot , \cdot )_{m^{(i)}}$ denotes the inner product on $L^2(E^{(i)}, m^{(i)})$ (\cite[Theorems 1.3 and 1.4]{Ok}). 

Let $X$ be a locally compact separable metric space and $\mu$ a positive Radon measure on $X$ with full support. A regular Dirichlet form $({\mathcal A}, {\mathcal V})$ on $L^2(X,\mu)$ (or the corresponding Markov process ${\bf M}$) is non-transient if and only if the following property holds:
\begin{enumerate}
\item[(R)] There exists a sequence $\{u_n\}_{n\ge 1} \subset {\mathcal V}$ such that $0 \le u_n \le 1$ $\mu$-a.e., $\lim_{n\to \infty}u_n=1$ $\mu$-a.e. and ${\mathcal A}(u_n,u_n) \to 0$ as $n\to \infty$ 
\end{enumerate}
(see \cite[Theorem 1.6.3]{FOT}). In particular, it is known that $({\mathcal A}, {\mathcal V})$ (or ${\bf M}$) is to be recurrent if it is irreducible and possesses the property (R) (\cite[Lemma 1.6.4]{FOT}).
\vskip 0.2cm
Now we give some simple criteria for the non-transience of ${\bf X}$ through the marginal processes $\{ {\bf X}^{(i)}, i=1,2,\dots, N\}$ in an analytic way. 

\begin{prop}\label{thm:criterion}
Let $\{({\mathcal E}^{(i)}, {\mathcal F}^{(i)}), i=1,2,\cdots, N\}$ and $({\mathcal E}, {\mathcal F})$ be as above.
Assume that there exist sequences $\{u^{(i)}_n\}_{n\ge 1} \subset {\mathcal F}^{(i)}$ such that~ $0 \le u^{(i)}_{n^{(i)}(k)} \le 1$ $m^{(i)}$-a.e., $\lim_{k\to \infty}u^{(i)}_{n^{(i)}(k)}=1$ $m^{(i)}$-a.e., $u^{(i)}_{\infty}:=1$ $m^{(i)}$-a.e. $(i=1,2, \dots, N)$ and 
\begin{equation}\label{1kinner}
\lim _{k\rightarrow \infty} \frac{1}{k}\, \prod _{i=1}^N \left(u_{n^{(i)}(k)}^{(i)}, u_{n^{(i)}(k)}^{(i)} \right) _{m^{(i)}} =0
\end{equation}
for the index
\begin{align}\label{ID}
n^{(i)}(k) := \inf \left\{ n\in {\mathbb N} :  {\mathcal E}^{(i)}\left(u_n^{(i)}, u_n^{(i)}\right) \leq \frac 1k \left(u_n^{(i)}, u_n^{(i)}\right)_{m^{(i)}} \right\} , \quad k\in {\mathbb N}. 
\end{align}
Then the Dirichlet form $({\mathcal E}, {\mathcal F})$ $($or the direct product process ${\bf X}$ of $\{ {\bf X}^{(i)}, i=1,2,\dots, N\}$$)$ 
is non-transient.
\end{prop}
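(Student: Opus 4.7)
The plan is to construct a single sequence $\{v_k\} \subset {\mathcal F}$ that witnesses the recurrence criterion (R) of \cite[Theorem 1.6.3]{FOT} for the product form $({\mathcal E}, {\mathcal F})$; non-transience will then be immediate. Concretely I would set
\[
 v_k := u^{(1)}_{n^{(1)}(k)} \otimes u^{(2)}_{n^{(2)}(k)} \otimes \cdots \otimes u^{(N)}_{n^{(N)}(k)} , \qquad k \in {\mathbb N}.
\]
Since each factor lies in ${\mathcal F}^{(i)}$, the tensor-product description of the product Dirichlet form recalled just above the statement gives $v_k \in {\mathcal F}$, and the pointwise bounds $0\le u^{(i)}_{n^{(i)}(k)} \le 1$ together with the product structure of $m$ yield $0 \le v_k \le 1$ $m$-a.e.\ and $v_k \to 1$ $m$-a.e.\ as $k\to\infty$.

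The heart of the proof is the energy bound. Applying the product formula (\ref{eq:Dprod}) to $v_k$ expresses ${\mathcal E}(v_k, v_k)$ as a sum over $i$ of terms in which the $i$-th factor is the Dirichlet energy ${\mathcal E}^{(i)}(u^{(i)}_{n^{(i)}(k)}, u^{(i)}_{n^{(i)}(k)})$ and the remaining factors are $L^2$-inner products. The whole point of the definition (\ref{ID}) is that for each $i$ this Dirichlet energy is at most $\frac{1}{k}(u^{(i)}_{n^{(i)}(k)}, u^{(i)}_{n^{(i)}(k)})_{m^{(i)}}$, so every summand collapses to $\frac{1}{k}$ times the \emph{same} $N$-fold product of $L^2$-norms, and I would obtain
\[
 {\mathcal E}(v_k, v_k) \le \frac{N}{k} \prod_{i=1}^N \left(u^{(i)}_{n^{(i)}(k)}, u^{(i)}_{n^{(i)}(k)}\right)_{m^{(i)}} .
\]
By the hypothesis (\ref{1kinner}) the right-hand side tends to $0$ as $k \to \infty$, so $\{v_k\}$ satisfies (R).

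The argument is essentially a bookkeeping exercise and I do not foresee any genuine obstacle; its whole content is the realisation that the index (\ref{ID}) is engineered precisely to trade, via the product formula, the Dirichlet energies ${\mathcal E}^{(i)}$ for the $L^2$-norms that appear in (\ref{1kinner}). The only points needing a short check are that $v_k$ actually belongs to ${\mathcal F}$ and has finite norm, both of which are immediate from $u^{(i)}_{n^{(i)}(k)} \in {\mathcal F}^{(i)} \subset L^2(E^{(i)}, m^{(i)})$ together with the tensor-product core description.
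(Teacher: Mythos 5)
Your proposal is correct and coincides with the paper's own proof: the authors likewise form the tensor product $u_{n(k)}:=u^{(1)}_{n^{(1)}(k)}\otimes\cdots\otimes u^{(N)}_{n^{(N)}(k)}$, apply the product formula \eqref{eq:Dprod} together with the defining inequality in \eqref{ID} to get ${\mathcal E}(u_{n(k)},u_{n(k)})\le \frac{N}{k}\prod_{i=1}^N\bigl(u^{(i)}_{n^{(i)}(k)},u^{(i)}_{n^{(i)}(k)}\bigr)_{m^{(i)}}$, and conclude via condition (R). No gaps.
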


\begin{proof}
Let 
$$
u_{n(k)}:=u_{n^{(1,\dots, N)}(k)}=u_{n^{(1)}(k)}^{(1)}\otimes \cdots \otimes u_{n^{(N)}(k)}^{(N)}.
$$
It is then easy to see that $0 \le u_{n(k)} \le 1$ $m$-a.e. and $u_{n(k)} \rightarrow 1$ as $k \to \infty$ $m$-a.e. Moreover, by (\ref{eq:Dprod}) and the assumption (\ref{1kinner}), we have
\begin{align*}
{\mathcal E}\left(u_{n(k)},u_{n(k)}\right)&= \sum_{i=1}^N {\mathcal E}^{(i)} \left(u^{(i)}_{n^{(i)}(k)},u^{(i)}_{n^{(i)}(k)}\right) \prod _{j=1, j\neq i}^N \left(u^{(j)}_{n^{(j)}(k)},u^{(j)}_{n^{(j)}(k)} \right) _{m^{(j)}} \\
&\le \frac{N}{k}\,\prod _{i=1}^N \left(u_{n^{(i)}(k)}^{(i)}, u_{n^{(i)}(k)}^{(i)} \right)_{m^{(i)}} \longrightarrow 0 \quad {\rm as}~~k \to \infty.
\end{align*}
Hence $({\mathcal E}, {\mathcal F})$ (or ${\bf X}$) is non-transient.  
\end{proof}

For a strictly positive continuous function $g^{(i)}$ on $E^{(i)}$, let $\{{\bf Y}^{(i)}, i=1,2,\dots, N\}$ be the time changed processes of $\{{\bf X}^{(i)}, i=1,2,\dots, N\}$ defined by 
$$
Y^{(i)}(t):=X^{(i)}\left(\tau_t^{(i)}\right),
$$
where $\tau^{(i)}_t$ is the right continuous inverse of the positive continuous additive functional $A_t^{(i)}=\int_0^tg^{(i)}(X^{(i)}(s)){\rm d}s$ of ${\bf X}^{(i)}$, that is, $\tau^{(i)}_t=\inf\{s>0 : A_s^{(i)} > t\}$ $(i=1,2,\dots, N)$. Then, since the fine support of $g^{(i)}m^{(i)}$ equals to $E^{(i)}$, $\{{\bf Y}^{(i)}, i=1,2,\dots, N\}$ are $g^{(i)}m^{(i)}$-symmetric Humt processes on $E^{(i)}$. Note that the irreducibility and non-transience are stable under time-changed transform (see \cite[Theorem 6.2.3]{FOT} and \cite[Theorems 8.2 and 8.5]{Sil}). Hence $\{{\bf Y}^{(i)}, i=1,2,\dots, N\}$ are irreducible and non-transient if $\{{\bf X}^{(i)}, i=1,2,\dots, N\}$ are so. Let $\{(\check{\mathcal E}^{(i)}, \check{\mathcal F}^{(i)}), i=1,2,\dots N\}$ be the associated Dirichlet forms of $\{{\bf Y}^{(i)}, i=1,2,\dots, N\}$ on $L^2(E^{(i)}, g^{(i)}m^{(i)})$. Then $(\check{\mathcal E}^{(i)}, \check{\mathcal F}^{(i)})$ is given by
\begin{align*}
\left\{\begin{array}{ll}
\check{\mathcal F}^{(i)}=\left\{\varphi\in L^2\left(E^{(i)}, g^{(i)}m^{(i)}\right) : \text{there exists }f\in {\mathcal F}^{(i)}_e\text{ such that }\varphi=f\ g^{(i)}m^{(i)}\text{-a.e.}\right\} \\ 
\check{\mathcal E}^{(i)}(\varphi,\varphi)={\mathcal E}^{(i)}(f,f)\quad\text{ for }\varphi \in \check{\mathcal F}^{(i)}\text{ and } f\in {\mathcal F}^{(i)}_e\text{ with }\varphi=f\ g^{(i)}m^{(i)}\text{-a.e.,}
\end{array}\right.
\end{align*}
where ${\mathcal F}^{(i)}_e$ is the extended Dirichlet space of ${\mathcal F}^{(i)}$ (see \cite{FOT} for the definition). 
Then we can obtain the following corollary as a consequence of Proposition \ref{thm:criterion}. 

\begin{cor}\label{maincor}
For $i=1,2\dots, N$, let $g^{(i)}$ be a strictly positive continuous function on $E^{(i)}$. 
Assume that the marginal processes $\{{\bf X}^{(i)}, i=1,2,\dots, N\}$ are irreducible and non-transient.
If there exist sequences $\{u^{(i)}_n\}_{n\ge 1} \subset {\mathcal F}^{(i)}_e \cap L^2(E^{(i)}, g^{(i)}m^{(i)})$ such that~ $0 \le u^{(i)}_{n^{(i)}(k)} \le 1$ $g^{(i)}m^{(i)}$-a.e., $u^{(i)}_{n^{(i)}(k)} \rightarrow 1$ as $k \to \infty$ $g^{(i)}m^{(i)}$-a.e. $(i=1,2, \dots, N)$ and
\begin{equation}\label{1kinner2}
\lim _{k\rightarrow \infty} \frac{1}{k}\, \prod _{i=1}^N \left(u_{n^{(i)}(k)}^{(i)}, u_{n^{(i)}(k)}^{(i)} \right) _{g^{(i)}m^{(i)}} =0,
\end{equation}
for the index
\begin{equation}\label{nik}
n^{(i)}(k) := \inf \left\{ n\in {\mathbb N} :  {\mathcal E}^{(i)}\left(u_n^{(i)}, u_n^{(i)}\right) \leq \frac 1k \left(u_n^{(i)}, u_n^{(i)}\right)_{g^{(i)}m^{(i)}} \right\} , 
\end{equation}
then the direct product process ${\bf Y}$ of $\{ {\bf Y}^{(i)}, i=1,2,\dots, N\}$ is recurrent.
\end{cor}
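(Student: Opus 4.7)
The plan is to reduce Corollary \ref{maincor} to Proposition \ref{thm:criterion} applied to the time-changed Dirichlet forms $(\check{\mathcal E}^{(i)}, \check{\mathcal F}^{(i)})$ on $L^2(E^{(i)}, g^{(i)}m^{(i)})$, and then to upgrade non-transience to recurrence via irreducibility.

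First, I would verify that each test function $u_n^{(i)}$ belongs to $\check{\mathcal F}^{(i)}$ with the correct Dirichlet energy. By hypothesis $u_n^{(i)} \in {\mathcal F}^{(i)}_e \cap L^2(E^{(i)}, g^{(i)}m^{(i)})$, so the description of $(\check{\mathcal E}^{(i)}, \check{\mathcal F}^{(i)})$ recalled just before the corollary gives $u_n^{(i)} \in \check{\mathcal F}^{(i)}$ with $\check{\mathcal E}^{(i)}(u_n^{(i)}, u_n^{(i)}) = {\mathcal E}^{(i)}(u_n^{(i)}, u_n^{(i)})$. Consequently the index $n^{(i)}(k)$ defined in (\ref{nik}) coincides with the index obtained by replacing $({\mathcal E}^{(i)}, m^{(i)})$ by $(\check{\mathcal E}^{(i)}, g^{(i)}m^{(i)})$ in (\ref{ID}).

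Next, I would apply Proposition \ref{thm:criterion} with the triples $(E^{(i)}, m^{(i)}, {\mathcal E}^{(i)}, {\mathcal F}^{(i)})$ replaced by $(E^{(i)}, g^{(i)}m^{(i)}, \check{\mathcal E}^{(i)}, \check{\mathcal F}^{(i)})$. The pointwise bounds $0\le u_{n^{(i)}(k)}^{(i)}\le 1$, the $g^{(i)}m^{(i)}$-a.e.\ convergence to $1$, and condition (\ref{1kinner2}) are exactly the hypotheses required by Proposition \ref{thm:criterion} in the time-changed setting. The proposition therefore yields that the Dirichlet form associated with the direct product of $\{{\bf Y}^{(i)}\}$ is non-transient; equivalently, ${\bf Y}$ is non-transient.

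Finally, I would upgrade non-transience to recurrence. Each ${\bf Y}^{(i)}$ is irreducible because irreducibility is preserved by time change through a strictly positive continuous function (\cite[Theorem 6.2.3]{FOT}, \cite[Theorems 8.2 and 8.5]{Sil}); by \cite[Proposition 3.1]{FuO} (or \cite[Theorem 2.6]{Ok}) the product ${\bf Y}$ is then irreducible on $(E, g^{(1)}m^{(1)}\otimes\cdots\otimes g^{(N)}m^{(N)})$. Since an irreducible non-transient Dirichlet form is recurrent (\cite[Lemma 1.6.4]{FOT}), ${\bf Y}$ is recurrent. No real obstacle arises; the only mildly delicate point is the first step, namely checking that test functions drawn from the extended Dirichlet space ${\mathcal F}^{(i)}_e$ genuinely represent elements of $\check{\mathcal F}^{(i)}$ with the same form value, which is precisely why $u_n^{(i)}\in L^2(E^{(i)}, g^{(i)}m^{(i)})$ is imposed in the hypothesis.
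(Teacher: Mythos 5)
Your proposal is correct and follows exactly the route the paper intends: the paper gives no separate proof of Corollary \ref{maincor}, merely asserting it as a consequence of Proposition \ref{thm:criterion} after recording that time change by a strictly positive continuous $g^{(i)}$ preserves regularity, irreducibility and non-transience, and that $\check{\mathcal E}^{(i)}(u,u)={\mathcal E}^{(i)}(f,f)$ for representatives $f\in{\mathcal F}^{(i)}_e$. Your write-up supplies precisely the missing details (membership of the $u_n^{(i)}$ in $\check{\mathcal F}^{(i)}$, application of the proposition to the checked forms, and the irreducibility upgrade from non-transience to recurrence), so there is nothing to correct.
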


\subsection{Some lemmas on diffusion processes in non-random environments}\label{sec:NRE}

Let $w$ be a locally bounded and Borel measurable function on ${\mathbb R}^d$.
Consider the strongly local Dirichlet form $({\mathcal E}^{w}, {\mathcal F}^w)$ defined by (\ref{eq:DFw}).
Denote  $C_0^\infty ({\mathbb R}^d)$ by the set of all smooth functions with compact support in $\mathbb{R}^d$.
Note that the local boundedness of $w$ implies that $C_0^\infty ({\mathbb R}^d)$ is dense in ${\mathcal F}^w$, in particular $({\mathcal E}^{w}, {\mathcal F}^w)$ is regular. 
Let ${\bf X}^{w}= (X_{w}(t), P_{x}^{w})$ be the diffusion process associated with $({\mathcal E}^{w}, {\mathcal F}^{w})$. The $d$-dimensional Brownian motion is associated to $({\mathcal E}^{0}, {\mathcal F}^{0})$, the Dirichlet form $({\mathcal E}^{w}, {\mathcal F}^{w})$ with $w\equiv 0$. 

For $r\in (1,\infty )$, let $\varphi \in C_0^\infty ({\mathbb R}^d)$ such that $0\leq \varphi(x) \leq 1$ on $\mathbb{R}^d$, $\varphi (x) =1$ on $|x|\leq 1$, and $\varphi (x) =0$ on $|x| \geq r$.
For fixed $r$ and $\varphi$, define the sequence $\{ u_n\} \subset {\mathcal F}^{w}$ by
\begin{equation}\label{eq:defun}
u_n(x) := \varphi \left( r^{-n} x \right) ,\quad x\in {\mathbb R}^d, \ n\in {\mathbb N} \cup \{ 0\}.
\end{equation}
It is clear that $\lim _{n\rightarrow \infty} u_n(x) =1$ for $x\in {\mathbb R}^d$. 
We let 
$$
\overline{w}(a,b):=\sup_{a \le |x| \le b}w(x), \quad \underline{w}(a,b):=\inf_{a \le |x| \le b}w(x)
$$
for $a, b \ge 0$ as in Theorem \ref{thm:RE1}.
Then, it is easy to see by the definition of $u_n$ and the assumption on $\varphi$ that for $n\in {\mathbb N}$,
\begin{equation}\label{lem:w1-2} 
C_1 r^{dn} \exp \left(-\overline{w}(0,r^{n+1})\right)
\leq \int _{{\mathbb R}^d} |u_n(x)|^2 e^{-w(x)}{\rm d}x \leq C_1 r^{dn} \exp \left(-\underline{w}(0,r^{n+1})\right),
\end{equation}
where $C_1:=C_1(d, r,\varphi) = \int _{|x|\le r} \varphi \left( x\right) ^2 {\rm d}x$. 
Moreover, since 
$$
\int _{{\mathbb R}^d} |u_n(x)|^2 e^{-w(x)}{\rm d}x \geq r^{dn} \int _{|x|\leq r^{-\ell}} e^{-w(r^n x)}{\rm d}x
$$
for any $\ell \in \mathbb{N}$, it also follows that for $n \in \mathbb{N}$
\begin{equation}\label{lem:w1-3}
\int _{{\mathbb R}^d} |u_n(x)|^2 e^{-w(x)}{\rm d}x \geq V_d\, r^{d(n-\ell)} \exp \left(-\overline{w}(0,r^{n-\ell})\right).
\end{equation}
In particular
\begin{equation}\label{lem:w1-4}
\int _{{\mathbb R}^d} |u_n(x)|^2 e^{-w(x)}{\rm d}x \ge V_d \exp \left(-\overline{w}(0, 1)  \right),
\end{equation}
where $V_d$ denotes the volume of the unit ball in ${\mathbb R}^d$. On the other hand, the relation 
$$
\int _{{\mathbb R}^d} |\nabla u_n(x)|^2 e^{-w(x)}{\rm d}x = r^{(d-2)n} \int _{1\le |x| \le r} |\nabla \varphi  \left( x\right) | ^2 e^{-w(r^n x)}{\rm d}x
$$
implies that for $n\in {\mathbb N}$,
\begin{equation}\label{lem:w2}
C_2 r^{(d-2)n}\exp \left( - \overline{w}(r^n, r^{n+1}) \right)
\leq \int _{{\mathbb R}^d} |\nabla u_n(x)|^2 e^{-w(x)}{\rm d}x \leq C_2 r^{(d-2)n} \exp \left( - \underline{w}(r^n, r^{n+1})\right),
\end{equation}
where $C_2:=C_2(d, r,\varphi) = \int _{1\le |x| \le r} | \nabla \varphi  \left( x \right) |^2 {\rm d}x$.
\vskip 0.2cm

Define a number $n(k)\in {\mathbb N} \cup \{ \infty\}$ by
$$
n(k) := \inf \left\{ n\in {\mathbb N} : {\mathcal E}^w(u_n, u_n) \leq \frac{1}{k}\int_{\mathbb{R}^d}|u_n(x)|^2e^{-w(x)}{\rm d}x \right\} , \quad k\in {\mathbb N}.
$$

\begin{lem}\label{lem:w3}
Let $k\in {\mathbb N}$ such that $n(k)<\infty$ and ${\mathcal E}^w(u_{n(k)}, u_{n(k)})\neq 0$. Then we have
\begin{align*}
&\int _{{\mathbb R}^d} |u_{n(k)}(x)|^2 e^{-w(x)}{\rm d}x < \frac{C_2}{2} k r^{(d-2)(n(k)-1)} \exp \left(-\underline{w}(r^{n(k)-1}, r^{n(k)})  \right)  \\
&\quad \hspace{4cm} \times \left( 1+ \frac{(r^d-1) r^{dn(k)}\exp \left(-\underline{w}(r^{n(k)-1}, r^{n(k)+1}) \right)}{\exp \left(-\overline{w}(0, 1)  \right)} \right).
\end{align*}
\end{lem}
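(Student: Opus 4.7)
The plan is, setting $n := n(k)$ for brevity, first to extract an upper bound on $\int |u_{n-1}|^2 e^{-w}\,{\rm d}x$ from the minimality of $n(k)$, then to bound the difference $\int |u_n|^2 e^{-w}\,{\rm d}x - \int |u_{n-1}|^2 e^{-w}\,{\rm d}x$ via a careful scaling argument on two sub-annuli, and finally to combine these two ingredients. For the first step, the defining inequality of $n(k)$ must fail at $n-1$, so $\int |u_{n-1}|^2 e^{-w}\,{\rm d}x < k\,{\mathcal E}^w(u_{n-1}, u_{n-1})$, and inserting the right-hand upper bound of \eqref{lem:w2} will yield
$$
\int _{{\mathbb R}^d} |u_{n-1}(x)|^2 e^{-w(x)}\,{\rm d}x < \frac{C_2}{2}\, k\, r^{(d-2)(n-1)} \exp\left(-\underline{w}(r^{n-1}, r^n)\right),
$$
which accounts for the leading ``$1$'' inside the parenthesis of the target bound.

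For the second step, since $u_n \equiv 1$ on $\{|x|\le r^n\}$ and $u_{n-1}\equiv 0$ on $\{|x|\ge r^n\}$, one can write $\int (|u_n|^2 - |u_{n-1}|^2) e^{-w}\,{\rm d}x = I_1 + I_2$, where
$$
I_1 := \int_{r^{n-1} < |x|\le r^n} (1 - |u_{n-1}|^2) e^{-w}\,{\rm d}x, \qquad I_2 := \int_{r^n < |x|\le r^{n+1}} |u_n|^2 e^{-w}\,{\rm d}x.
$$
The idea is to apply the separate dilations $x = r^{n-1} y$ in $I_1$ and $x = r^n y$ in $I_2$; in both cases the integrand simplifies via $u_{n-1}(r^{n-1}y) = \varphi(y) = u_n(r^n y)$, and $e^{-w}$ is bounded by $\exp(-\underline{w}(r^{n-1},r^n))$, respectively $\exp(-\underline{w}(r^n,r^{n+1}))$, on the corresponding sub-annulus. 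After invoking $\underline{w}(r^{n-1},r^{n+1})\le \underline{w}(r^{n-1},r^n) \wedge \underline{w}(r^n,r^{n+1})$ and $r^{d(n-1)}\le r^{dn}$, the identity $(1-\varphi^2)+\varphi^2\equiv 1$ collapses the two resulting integrals of $\varphi$-expressions into $\int_{1<|y|\le r}{\rm d}y = V_d(r^d - 1)$, giving
$$
I_1 + I_2 \le V_d (r^d-1) r^{dn} \exp\left(-\underline{w}(r^{n-1},r^{n+1})\right).
$$

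Finally, \eqref{lem:w1-4} applied to $u_{n-1}$ yields $V_d \le \exp(\overline{w}(0,1)) \int |u_{n-1}|^2 e^{-w}\,{\rm d}x$; substituting this into the $I_1+I_2$ estimate produces
$$
\int |u_n|^2 e^{-w}\,{\rm d}x \le \int |u_{n-1}|^2 e^{-w}\,{\rm d}x \cdot \left( 1 + \frac{(r^d-1)\,r^{dn}\,\exp(-\underline{w}(r^{n-1},r^{n+1}))}{\exp(-\overline{w}(0,1))}\right),
$$
and the claim follows by inserting the first-step upper bound on $\int |u_{n-1}|^2 e^{-w}\,{\rm d}x$. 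The main obstacle is the second step: naïvely bounding the difference uniformly over the whole annulus $\{r^{n-1}<|x|\le r^{n+1}\}$ yields only the coarser constant $V_d(r^{2d}-1)\,r^{d(n-1)}$, which exceeds the required $V_d(r^d-1)\,r^{dn}$ by the factor $1+r^{-d}>1$. The separate scaling on the two sub-annuli, together with the partition-of-unity-type identity $(1-\varphi^2)+\varphi^2\equiv 1$, is precisely what extracts the sharp constant needed for the stated estimate.
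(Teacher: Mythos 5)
Your proof is correct and follows the same skeleton as the paper's: you use the failure of the defining inequality of $n(k)$ at $n(k)-1$ together with \eqref{lem:w2} to get $\int|u_{n(k)-1}|^2e^{-w}\,{\rm d}x<\tfrac{C_2}{2}kr^{(d-2)(n(k)-1)}\exp(-\underline{w}(r^{n(k)-1},r^{n(k)}))$, and you control the ratio of the $n(k)$-th to the $(n(k)-1)$-th $L^2$ mass by $1$ plus the difference integral divided by the lower bound \eqref{lem:w1-4}, exactly as in the paper. The one point where you genuinely depart is the estimate of $\int(|u_{n(k)}|^2-|u_{n(k)-1}|^2)e^{-w}\,{\rm d}x$: the paper bounds the integrand crudely by $e^{-w}$ on the full annulus $r^{n(k)-1}\le|x|\le r^{n(k)+1}$ and then asserts the bound $V_d(r^d-1)r^{dn(k)}\exp(-\underline{w}(r^{n(k)-1},r^{n(k)+1}))$, which taken literally overshoots the actual volume bound $V_d(r^{2d}-1)r^{d(n(k)-1)}$ downward by the factor $1+r^{-d}$ --- i.e.\ the paper's intermediate step does not quite deliver the constant it claims. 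Your decomposition into the two sub-annuli, the separate dilations, and the identity $(1-\varphi^2)+\varphi^2\equiv1$ recover exactly the constant $(r^d-1)r^{dn(k)}$ appearing in the statement, so your argument in fact justifies the displayed bound more carefully than the paper's own chain of inequalities. One shared, harmless caveat: both you and the paper use that the defining inequality fails at index $n(k)-1$, which presupposes $n(k)\ge2$ since the infimum in the definition of $n(k)$ ranges over $n\in{\mathbb N}$; this edge case is immaterial for the later applications, where the relevant indices tend to infinity.
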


\begin{proof}
We note that the choice of $k \in {\mathbb N}$ and the definition of $n(k)$ imply 
\[
\int _{{\mathbb R}^d} |u_{n(k)-1}(x)|^2 e^{-w(x)}{\rm d}x < \frac{k}{2}\int _{{\mathbb R}^d} |\nabla u_{n(k)-1}(x)|^2 e^{-w(x)}{\rm d}x.
\]
In view of (\ref{lem:w1-2}), (\ref{lem:w1-4}) and (\ref{lem:w2}), we then have
\begin{align*}
&\int _{{\mathbb R}^d} |u_{n(k)}(x)|^2 e^{-w(x)}{\rm d}x 
= \frac{\int _{{\mathbb R}^d} |u_{n(k)}(x)|^2 e^{-w(x)} {\rm d}x}{\int _{{\mathbb R}^d} |u_{n(k)-1}(x)|^2 e^{-w(x)} {\rm d}x} \int _{{\mathbb R}^d} |u_{n(k)-1}(x)|^2 e^{-w(x)} {\rm d}x\\
& \quad < \frac 12 \left( 1+ \frac{\int _{{\mathbb R}^d} \left( |u_{n(k)}|^2 - |u_{n(k)-1}|^2\right) e^{-w(x)} {\rm d}x}{\int _{{\mathbb R}^d} |u_{n(k)-1}|^2 e^{-w(x)} {\rm d}x} \right) k  \int _{{\mathbb R}^d} |\nabla u_{n(k)-1}(x)|^2 e^{-w(x)}{\rm d}x\\
&\quad \leq \frac{C_2}{2} \left( 1+ \frac{\int _{r^{n(k)-1} \le |x| \le r^{n(k)+1}} e^{-w(x)} {\rm d}x}{V_d \exp \left(-\overline{w}(0, 1)  \right)} \right) k r^{(d-2)(n(k)-1)} \exp \left(-\underline{w}(r^{n(k)-1}, r^{n(k)}) \right) \\
&\quad \leq \frac{C_2}{2} \left( 1+ \frac{(r^d-1) r^{dn(k)}\exp \left(-\underline{w}(r^{n(k)-1}, r^{n(k)+1}) \right)}{\exp \left(-\overline{w}(0, 1)  \right)} \right) \\
&\quad \hspace{6cm} \times k r^{(d-2)(n(k)-1)} \exp \left(-\underline{w}(r^{n(k)-1}, r^{n(k)}) \right).
\end{align*}
\end{proof}

\begin{lem}\label{lem:w1}
Let $k\in {\mathbb N}$ such that $n(k)<\infty$ and ${\mathcal E}^w(u_{n(k)}, u_{n(k)})\neq 0$. Then we have
\begin{align*}
&\int _{{\mathbb R}^d} |u_{n(k)}(x)|^2 e^{-w(x)}{\rm d}x\\
&\qquad < \widetilde C k^{d/2}\exp \left(-\underline{w}(0, r^{n(k)+1}) - \frac{d}{2}\, \underline{w}(r^{n(k)-1}, r^{n(k)}) + \frac{d}{2}\, \overline{w}(0, r^{n(k)-2})\right),
\end{align*}
where $\widetilde C := (2V_d)^{-d/2}C_1C_2^{d/2}r^{d(d+2)/2}$.
\end{lem}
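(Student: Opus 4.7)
The plan is to sharpen Lemma \ref{lem:w3} by trading a factor of $r^{2n(k)}$ for an extra factor of $k^{(d-2)/2}$, via the defining inequality for $n(k)-1$ combined with the lower bound (\ref{lem:w1-3}). The key observation is that Lemma \ref{lem:w3} only exploited the definition of $n(k)$ together with the upper estimates; here I would additionally invoke (\ref{lem:w1-3}) with a suitable choice of $\ell$ in order to extract a polynomial bound on $r^{n(k)}$ in terms of $k$, and then raise it to the power $d/2$ before feeding it back into (\ref{lem:w1-2}) at level $n(k)$.

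\textbf{Step 1.} Since $n(k)-1 < n(k)$, minimality of $n(k)$ gives
\[
\int _{\mathbb R^d} |u_{n(k)-1}|^2 e^{-w}\,dx < \frac{k}{2}\int _{\mathbb R^d} |\nabla u_{n(k)-1}|^2 e^{-w}\,dx.
\]
Apply (\ref{lem:w1-3}) with $n = n(k)-1$, $\ell = 1$ on the left and the upper half of (\ref{lem:w2}) with $n=n(k)-1$ on the right to obtain
\[
V_d\, r^{d(n(k)-2)} \exp\!\left(-\overline{w}(0,r^{n(k)-2})\right) < \frac{C_2 k}{2}\, r^{(d-2)(n(k)-1)} \exp\!\left(-\underline{w}(r^{n(k)-1}, r^{n(k)})\right).
\]

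\textbf{Step 2.} The exponent of $r$ on the left minus that on the right is $d(n(k)-2) - (d-2)(n(k)-1) = 2n(k) - d - 2$, so the inequality rearranges to
\[
r^{2n(k)} < \frac{C_2\, r^{d+2}}{2V_d}\, k\, \exp\!\left(\overline{w}(0,r^{n(k)-2}) - \underline{w}(r^{n(k)-1}, r^{n(k)})\right).
\]
Raising to the power $d/2$ yields
\[
r^{d n(k)} < \left(\frac{C_2}{2V_d}\right)^{\!d/2}\! r^{d(d+2)/2}\, k^{d/2} \exp\!\left(\tfrac{d}{2}\overline{w}(0,r^{n(k)-2}) - \tfrac{d}{2}\underline{w}(r^{n(k)-1}, r^{n(k)})\right).
\]

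\textbf{Step 3.} Insert this into the upper bound in (\ref{lem:w1-2}) with $n = n(k)$, namely
\[
\int _{\mathbb R^d} |u_{n(k)}|^2 e^{-w}\,dx \leq C_1\, r^{dn(k)} \exp\!\left(-\underline{w}(0, r^{n(k)+1})\right),
\]
and collect the constants as $\widetilde C = C_1 \bigl(C_2/(2V_d)\bigr)^{d/2} r^{d(d+2)/2} = (2V_d)^{-d/2} C_1 C_2^{d/2} r^{d(d+2)/2}$. This produces exactly the stated bound.

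There is no real obstacle: the only delicate point is the bookkeeping of the $r$-exponents (ensuring the algebra in Step 2 produces precisely $2n(k) - d - 2$, which is what yields the constant $r^{d(d+2)/2}$ after taking the $d/2$-power and recombining with the upper bound in (\ref{lem:w1-2})). Edge-case values of $n(k)$ (say $n(k)=1$) can be handled by replacing (\ref{lem:w1-3}) with the universal lower bound (\ref{lem:w1-4}) on the constant term, which only affects the exponential factor mildly and does not alter the final estimate once $\widetilde C$ is computed.
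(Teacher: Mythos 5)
Your proof is correct and follows essentially the same route as the paper's: both use the minimality of $n(k)$ at level $n(k)-1$ together with the lower bound (\ref{lem:w1-3}) (with $\ell=1$) and the upper half of (\ref{lem:w2}) to extract $r^{2n(k)} < \frac{C_2 r^{d+2}}{2V_d}\,k\,\exp\bigl(\overline{w}(0,r^{n(k)-2})-\underline{w}(r^{n(k)-1},r^{n(k)})\bigr)$, raise it to the power $d/2$, and substitute into the upper estimate of (\ref{lem:w1-2}). The exponent bookkeeping and the resulting constant $\widetilde{C}$ agree exactly with the paper.
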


\begin{proof}
In view of (\ref{lem:w1-3}) for $\ell=1$ and (\ref{lem:w2}), we see that for $n\in {\mathbb N}$
$$
\frac{\int _{{\mathbb R}^d} |\nabla u_n(x)|^2 e^{-w(x)}{\rm d}x}{\int _{{\mathbb R}^d} |u_n(x)|^2 e^{-w(x)}{\rm d}x}  \leq \frac{C_2}{V_d}r^{-2n+d} \exp \left(- \underline{w}(r^n, r^{n+1}) + \overline{w}(0, r^{n-1})\right).
$$
From this inequality and the definition of $n(k)$, we have
$$
\frac 1k < \frac{C_2}{2V_d}r^{-2n(k)+d+2}\exp \left(-\underline{w}(r^{n(k)-1}, r^{n(k)}) + \overline{w}(0, r^{n(k)-2})\right)
$$
for $k\in {\mathbb N}$ such that $n(k)<\infty$ and ${\mathcal E}^w(u_{n(k)}, u_{n(k)})\neq 0$.
Hence
$$
r^{dn(k)}=\left(r^{2n(k)}\right)^{d/2} < \left(\frac{r^{d+2}C_2}{2V_d}\right)^{d/2}k^{d/2}\exp \left(-\frac{d}{2}\,\underline{w}(r^{n(k)-1}, r^{n(k)}) + \frac{d}{2}\,\overline{w}(0, r^{n(k)-2})\right).
$$
Applying this inequality to the upper estimate in (\ref{lem:w1-2}), we can obtain the assertion.
\end{proof}

The condition $n(k) < \infty$ for any $k \in \mathbb{N}$ is guaranteed in the case of $d$-dimensional Brownian motion (or the Dirichlet form $({\mathcal E}^0, {\mathcal F}^0)$). Therefore, by virtue of Lemma \ref{lem:w1}, we have the following fact.

\begin{cor}\label{ex:Bm} For any $k \in \mathbb{N}$, it holds that
$$
k^{-d/2}\int _{{\mathbb R}^d} |u_{n(k)}(x)|^2 {\rm d}x < \widetilde C,
$$
where $\widetilde{C}$ is the constant which appeared in Lemma \ref{lem:w1}. 
\end{cor}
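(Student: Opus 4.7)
The plan is to apply Lemma \ref{lem:w1} directly with $w \equiv 0$, so the bulk of the work is already done; what remains is simply to verify the two hypotheses $n(k)<\infty$ and ${\mathcal E}^0(u_{n(k)}, u_{n(k)})\neq 0$ for every $k\in {\mathbb N}$, and then to specialize the bound.

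First, I would establish finiteness of $n(k)$. With $w\equiv 0$, the upper bound in \eqref{lem:w2} gives
$$
\int _{{\mathbb R}^d} |\nabla u_n|^2 \,{\rm d}x \le C_2 r^{(d-2)n},
$$
while the lower bound in \eqref{lem:w1-2} gives $\int_{{\mathbb R}^d} |u_n|^2 \,{\rm d}x \ge C_1 r^{dn}$. Consequently
$$
\frac{{\mathcal E}^0(u_n,u_n)}{\int_{{\mathbb R}^d}|u_n|^2 \,{\rm d}x} \le \frac{C_2}{2C_1}\, r^{-2n} \longrightarrow 0 \quad (n\to \infty),
$$
so for every $k\in {\mathbb N}$ there exists some $n\in {\mathbb N}$ with $\tfrac{C_2}{2C_1}r^{-2n}\le \tfrac{1}{k}$, showing $n(k)<\infty$. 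Moreover, since $\varphi \not\equiv$ constant on $\{1\le |x|\le r\}$, one has $\nabla \varphi \not\equiv 0$, so $\int |\nabla u_n|^2\,{\rm d}x > 0$ for every $n$, hence ${\mathcal E}^0(u_{n(k)}, u_{n(k)})\neq 0$.

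The hypotheses of Lemma \ref{lem:w1} being verified, I substitute $w\equiv 0$ into the conclusion. Since $\overline{w}(a,b) = \underline{w}(a,b) = 0$ for every $a,b\ge 0$, every exponential in the bound of Lemma \ref{lem:w1} equals $1$, and therefore
$$
\int _{{\mathbb R}^d} |u_{n(k)}(x)|^2\,{\rm d}x < \widetilde C \,k^{d/2},
$$
which is equivalent to the stated inequality after dividing by $k^{d/2}$. No substantive obstacle is expected: the whole argument is a routine specialization of Lemma \ref{lem:w1} to the Brownian case, with the only check being the a priori verification that $n(k)$ is finite, which is immediate from the explicit polynomial decay $r^{-2n}$ of the Rayleigh quotient for $u_n$.
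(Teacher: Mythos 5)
Your proposal is correct and follows essentially the same route as the paper: the paper's proof simply notes that $n(k)<\infty$ is guaranteed for the Brownian case and then invokes Lemma \ref{lem:w1} with $w\equiv 0$, so that all the exponential factors equal $1$. Your explicit verification of $n(k)<\infty$ via the Rayleigh quotient bound $\tfrac{C_2}{2C_1}r^{-2n}$ and of ${\mathcal E}^0(u_{n(k)},u_{n(k)})\neq 0$ just fills in details the paper leaves implicit.
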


\section{Recurrence of diffusion processes in random environments}\label{sec:RE}

Let ${\mathcal W}$ be the space of locally bounded and Borel measurable functions on ${\mathbb R}^{d}$ with the topology generated by the uniform convergence on compact sets.
Let ${\mathscr B}({\mathcal W})$ be the Borel $\sigma$-field of ${\mathcal W}$ and ${\mathcal Q}$ be a probability measure on $({\mathcal W}, {\mathscr B}({\mathcal W}))$. We call an element $w \in {\mathcal W}$ an environment and assume that ${\mathcal Q}(w(0)=0) =1$.
For given $w\in {\mathcal W}$, we define the Dirichlet form $({\mathcal E}^{w}, {\mathcal F}^{w})$ by (\ref{eq:DFw}) and let ${\bf X}^{w}= (X_{w}(t), P_{x}^{w})$ be the associated diffusion process of $({\mathcal E}^{w}, {\mathcal F}^{w})$.

For $r >1$ and $\alpha >0$, let $T$ be a mapping from ${\mathcal W}$ to ${\mathcal W}$ defined by $Tw(x) = r^{-\alpha} w(rx)$ for $x\in {\mathbb R}^d$. We assume the $\alpha$-semi-selfsimilarity of $({\mathcal W}, {\mathscr B}({\mathcal W}), {\mathcal Q})$ by (\ref{eq:selfsimilar}).
We say that a mapping $T$ is weakly mixing if 
$$
\lim _{n\rightarrow \infty} \frac{1}{n} \sum _{k=0}^{n-1} \left| {\mathcal Q}\left(T^k A_1 \cap A_2\right) - {\mathcal Q}(A_1){\mathcal Q}(A_2)\right| = 0, \quad A_1,A_2\in {\mathscr B}({\mathcal W}).
$$
\vskip 0.2cm 
As in the proof of Theorem 2.2 in \cite{KTT1}, we can prove the following lemma.

\begin{lem}\label{prop:mixing} Assume that $T$ is weakly mixing. If $A\in {\mathscr B}({\mathcal W})$ satisfies ${\mathcal Q}(A)>0$, then, for ${\mathcal Q}$-almost every $w\in {\mathcal W}$, $\{ n\in {\mathbb N} : T^n w \in A\}$ is an infinite set.
\end{lem}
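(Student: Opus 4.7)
The plan is to deduce the lemma from the ergodicity of $T$, which itself is an easy consequence of the weak mixing hypothesis. First I would verify that weak mixing implies ergodicity: if $A_0 \in \mathscr{B}(\mathcal{W})$ is $T$-invariant, then $\mathcal{Q}(T^k A_0 \cap A_0) = \mathcal{Q}(A_0)$ for every $k \geq 0$, and substituting $A_1 = A_2 = A_0$ into the definition of weak mixing forces $|\mathcal{Q}(A_0) - \mathcal{Q}(A_0)^2| = 0$, so $\mathcal{Q}(A_0) \in \{0,1\}$. Here I implicitly use that (\ref{eq:selfsimilar}), combined with the fact that $T$ is a measurable bijection on $\mathcal{W}$ (its inverse being $T^{-1}w(x) = r^{\alpha} w(x/r)$), ensures that $T$ is measure preserving in the customary sense $\mathcal{Q}(T^{-1}B) = \mathcal{Q}(B)$.

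Next I would write the exceptional set of $w$ that visits $A$ only finitely often as
$$
F \,:=\, \bigcup_{N \geq 0} F_N, \qquad F_N := \bigcap_{n \geq N} T^{-n}(A^c),
$$
observe the identity $F_N = T^{-N} F_0$ (obtained by distributing $T^{-N}$ through the intersection), and conclude $\mathcal{Q}(F_N) = \mathcal{Q}(F_0)$ for every $N$ by measure preservation of $T^N$. Since $F_N \nearrow F$, this gives $\mathcal{Q}(F) = \mathcal{Q}(F_0)$. The relation $T^{-1} F_0 = F_1 \supset F_0$ with equal $\mathcal{Q}$-measure shows that $F_0$ is $T$-invariant modulo null sets, so by the ergodicity established above, $\mathcal{Q}(F_0) \in \{0,1\}$. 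But the containment $F_0 \subset A^c$ (take $n=0$ in the defining intersection) forces $\mathcal{Q}(F_0) \leq 1 - \mathcal{Q}(A) < 1$, hence $\mathcal{Q}(F_0) = 0$, and the lemma follows.

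The proof is entirely standard and presents no real obstacle; the argument is essentially Poincaré recurrence together with ergodicity. It is worth noting that only the ergodicity of $T$ is actually required for this particular lemma, the full strength of weak mixing presumably being reserved for quantitative applications later in the paper. The one micro-verification is the identity $T^{-1}F_0 = F_1$, which is immediate from unwinding the definitions, and the equality of measures $\mathcal{Q}(F_1) = \mathcal{Q}(F_0)$ then upgrades the obvious containment $F_1 \supset F_0$ to an almost-sure equality.
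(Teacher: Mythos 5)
Your proof is correct and is precisely the standard ergodic-theoretic argument that the paper itself does not spell out but delegates to the proof of Theorem 2.2 in \cite{KTT1}: weak mixing gives ergodicity, and the exceptional set is exhibited as an increasing union of preimages of a single set, of measure $0$ or $1$, contained in $A^c$. The only micro-point worth tightening is that you establish ergodicity for strictly invariant sets but apply it to $F_0$, which is only invariant modulo a null set; this is covered by a standard equivalence (e.g.\ in \cite{Walters}), or can be avoided entirely by observing that $F=\bigcup_{N}F_N$ itself satisfies $T^{-1}F=F$ exactly, since $T^{-1}F=\bigcup_N F_{N+1}=F$.
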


Let $\varphi$ and $\{ u_n \}$ be the functions defined as in Section \ref{sec:prod}.
For given ${\mathbb N}$-valued increasing sequence $\{n_{\ell}\}_{\ell \ge 1}$, define $\ell(k) \in {\mathbb N} \cup \{ \infty\}$ by
$$
\ell(k):=\ell (k,w) = \inf \left\{ \ell \in {\mathbb N} : {\mathcal E}^w\!\left(u_{n_\ell}, u_{n_\ell}\right) \leq \frac{1}{k}\int_{\mathbb{R}^d}\left|u_{n_{\ell}}(x)\right|^2e^{-w(x)}{\rm d}x \right\} , \quad k\in {\mathbb N}.
$$
\vskip 0.2cm
\noindent
By using Lemma \ref{prop:mixing} above, we have the estimate as follows.

\begin{prop}\label{prop:RE}
Assume that $T$ is a weakly mixing and
\begin{equation}\label{mainassump}
{\mathcal Q} \left(\underline{w}(1, r^2) >a,~ \overline{w}(1, r^2) - \underline{w}(0,1) <b \right) >0
\end{equation}
for~ $0<a<b<\infty$. Then, for ${\mathcal Q}$-almost every $w\in {\mathcal W}$, there exists an ${\mathbb N}$-valued increasing sequence $\{ n_\ell ^w\}_{\ell \ge 1}$ such that
$$
\frac{1}{k^{\gamma}} \int _{{\mathbb R}^d} \left|u_{n_{\ell (k)}^w}(x)\right|^2 e^{-w(x)}{\rm d}x=o\left(\exp \left((b(1-\gamma) -a+\varepsilon)r^{\alpha (n_{\ell (k)}^w-1)}\right)\right) \quad (k \to \infty)
$$
for any $\varepsilon >0$ and $\gamma \in [0,1]$.
\end{prop}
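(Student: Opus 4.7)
\medskip
\noindent
\textbf{Proof plan.}
The plan is to choose
$$
A := \{w\in\mathcal{W} : \underline{w}(1, r^2) > a,\ \overline{w}(1, r^2) - \underline{w}(0,1) < b\},
$$
which by assumption \eqref{mainassump} satisfies $\mathcal{Q}(A)>0$, and to take $\{n_\ell^w\}_{\ell\ge 1}$ to be the increasing enumeration of $\{n\in\mathbb{N} : T^n w \in A\}$. Lemma~\ref{prop:mixing} guarantees this set is infinite for $\mathcal{Q}$-almost every $w$. The $\alpha$-semi-selfsimilar relation $Tw(x)=r^{-\alpha}w(rx)$ translates $T^{n_\ell}w\in A$ into the rescaled bounds
$$
\underline{w}(r^{n_\ell}, r^{n_\ell+2}) > a\,r^{\alpha n_\ell},
\qquad
\overline{w}(r^{n_\ell}, r^{n_\ell+2}) - \underline{w}(0, r^{n_\ell}) < b\,r^{\alpha n_\ell},
$$
so $\underline{w}(0,r^{n_\ell}) > (a-b)r^{\alpha n_\ell}$, and since $\underline{w}(r^{n_\ell}, r^{n_\ell+1})>0$ dominates for large $n_\ell$, also $\underline{w}(0,r^{n_\ell+1}) = \underline{w}(0,r^{n_\ell})$. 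This yields the crucial one-sided bound $-\underline{w}(0,r^{n_\ell+1}) < (b-a)r^{\alpha n_\ell}$ at every scale $n_\ell^w$.

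\medskip
Next I would verify that $\ell(k)\to\infty$, and hence $n_{\ell(k)}^w\to\infty$, as $k\to\infty$. This follows because $\mathcal{E}^w(u_{n_\ell},u_{n_\ell})>0$ for each fixed $\ell$ (since $u_{n_\ell}$ is nonconstant), so the threshold condition defining $\ell(k)$ eventually forces larger and larger $\ell$. Moreover, combining the definition of $\ell(k)$ at index $\ell(k)-1$ with the lower bound $\|u_{n}\|^2 \ge V_d e^{-\overline{w}(0,1)}$ from \eqref{lem:w1-4} and the scale-informed upper bound $\mathcal{E}^w(u_{n_{\ell(k)-1}^w},u_{n_{\ell(k)-1}^w})\le \tfrac{C_2}{2}r^{(d-2)n_{\ell(k)-1}^w}e^{-a r^{\alpha n_{\ell(k)-1}^w}}$, I obtain the asymptotic lower bound
$$
\log k \,\gtrsim\, a\,r^{\alpha n_{\ell(k)-1}^w} - O(n_{\ell(k)-1}^w),
$$
which is the key device allowing $k^{\gamma}$ to absorb exponential factors in the target bound.

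\medskip
The central estimate would then adapt the argument of Lemma~\ref{lem:w3} to the subsequence. Writing $n = n_{\ell(k)}^w$ and $n' = n_{\ell(k)-1}^w$, I would start from
$\|u_{n'}\|^2 < k\,\mathcal{E}^w(u_{n'},u_{n'})$ (since $\ell(k)-1$ fails the threshold), use the scale bound at $n'$ to estimate the right-hand side, decompose
$\|u_n\|^2 = \|u_{n'}\|^2 + \int(|u_n|^2 - |u_{n'}|^2)e^{-w}\,dx$, and bound the annulus integral over $[r^{n'},r^{n+1}]$ using
$\underline{w}(r^{n'}, r^{n+1}) \ge \min\!\bigl(a r^{\alpha n'},\, (a-b)r^{\alpha n},\, a r^{\alpha n}\bigr) = (a-b)r^{\alpha n}$,
the first coming from the scale bound at $n'$, the middle from $\underline{w}(0,r^n)>(a-b)r^{\alpha n}$ at $n$, and the last from the scale bound at $n$. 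Combining these with the upper bound $\|u_n\|^2 \le C_1 r^{dn} e^{-\underline{w}(0,r^{n+1})} \le C_1 r^{dn} e^{(b-a)r^{\alpha n}}$ from \eqref{lem:w1-2} furnishes the estimate for $\|u_{n_{\ell(k)}^w}\|^2$.

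\medskip
The hard part will be the algebraic balancing of the exponents to verify the bound uniformly for arbitrary $\varepsilon>0$ and $\gamma\in[0,1]$. Writing $r^{\alpha n'} = r^{-\alpha(n-n'-1)}\cdot r^{\alpha(n-1)}$ and using the relations $\log k \in [a r^{\alpha n'}, b r^{\alpha n}]$ forced by both sides of the threshold, the quotient
$\|u_n\|^2 / \bigl(k^{\gamma} \exp((b(1-\gamma)-a+\varepsilon) r^{\alpha(n-1)})\bigr)$
must be shown to tend to zero. The potentially large gap $n - n'$ must be controlled so that the contribution from the definition-forced factor $k^{1-\gamma}$ combined with $e^{-\gamma a r^{\alpha n'}}$ exactly compensates the growth $e^{(b-a)r^{\alpha n}}$ coming from the direct bound, to within the target exponent $(b(1-\gamma)-a+\varepsilon)r^{\alpha(n-1)}$; this is where keeping track of the polynomial corrections $r^{dn}$ and the $O(n')$ error terms is delicate.
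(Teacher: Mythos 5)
Your overall skeleton does match the paper's: choose an event $A$ of positive ${\mathcal Q}$-measure, invoke Lemma~\ref{prop:mixing} to get infinitely many good scales $\{n_\ell^w\}$, translate $T^{n_\ell}w\in A$ into quantitative bounds on $w$ at those scales, adapt the mechanism of Lemma~\ref{lem:w3} at the first-passage index $\ell(k)$, and balance exponents. But there are two defects, one fixable and one fatal. The fixable one is normalization: you take $A$ to be the unscaled event, so $T^{n}w\in A$ gives $\underline{w}(r^{n},r^{n+2})>ar^{\alpha n}$ and $\overline{w}(r^{n},r^{n+2})-\underline{w}(0,r^{n})<br^{\alpha n}$, with rate $r^{\alpha n}=r^{\alpha}\cdot r^{\alpha(n-1)}$, while the target exponent is $(b(1-\gamma)-a+\varepsilon)r^{\alpha(n-1)}$. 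Since $r^{\alpha}>1$, every bound you produce overshoots by the factor $r^{\alpha}$ in the exponent: already at $\gamma=0$ your direct bound $C_1r^{dn}e^{(b-a)r^{\alpha n}}$ is \emph{not} $o\bigl(e^{(b-a+\varepsilon)r^{\alpha(n-1)}}\bigr)$ once $\varepsilon<(b-a)(r^{\alpha}-1)$. The paper pre-scales by one step, setting $A=\{\underline{w}(r^{-1},r)>ar^{-\alpha},\ \overline{w}(1,r)-\underline{w}(0,1)<br^{-\alpha}\}$ (positive measure via \eqref{eq:selfsimilar} and the identity $\underline{w}(0,r)=\underline{w}(0,1)$, valid on the event since $w(0)=0$), so that $T^{n_\ell}w\in A$ yields exactly \eqref{eq:propRE01} with the correct rates $r^{\alpha(n_\ell-1)}$.

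The genuine gap is the step you yourself defer as ``the hard part'': as you have set it up, it cannot close. Write $N_w(n):=\int_{{\mathbb R}^d}|u_n(x)|^2e^{-w(x)}{\rm d}x$, $n=n_{\ell(k)}^w$, $n'=n_{\ell(k)-1}^w$. Your only inputs are the lower bound $\log k\gtrsim ar^{\alpha n'}$ (failure at $n'$) and the bound $N_w(n)\le C_1r^{dn}e^{(b-a)r^{\alpha n}}$, the same $(b-a)$-exponent appearing in your long-annulus correction over $[r^{n'},r^{n+1}]$ (where $(a-b)r^{\alpha n}$ is indeed all that is known about $\underline{w}$, since nothing constrains $w$ on intermediate scales). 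To beat the target you would need $\log k\gtrsim(b-\varepsilon/\gamma)r^{\alpha(n-1)}$ up to polynomial corrections; but the set $\{n:T^nw\in A\}$ is infinite with \emph{no control on the gaps} $n-n'$, so $ar^{\alpha n'}$ can be negligible against $r^{\alpha(n-1)}$ — and even in the ideal adjacent case $n'=n-1$ you only get $ar^{\alpha(n-2)}$, short of $br^{\alpha(n-1)}$ since $b>a>ar^{-\alpha}$; passing to a sparser subsequence only widens the gaps. The paper never uses a lower bound on $k$. It keeps the factor $k$ explicit: the Lemma~\ref{lem:w3} bound is invoked at the adjacent index, giving \eqref{eq:propRE02}, $N_w(n)\le Ck\,r^{(d-2)n}e^{-ar^{\alpha(n-1)}+\overline{w}(0,1)}$, whose correction factor $1+(r^d-1)r^{dn}e^{-ar^{\alpha(n-1)}+\overline{w}(0,1)}$ is $1+o(1)$ precisely because the correction annulus $[r^{n-1},r^{n+1}]$ is one where \eqref{eq:propRE01} makes $\underline{w}$ \emph{positive} — not your long annulus carrying the exponentially large $e^{(b-a)r^{\alpha n}}$. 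It then splits $k^{-\gamma}\cdot k=k^{1-\gamma}$ and bounds $k$ \emph{from above} by $(2C_1/C_2)r^{2n}e^{br^{\alpha(n-1)}}$, from the success inequality \eqref{eq:r2} together with \eqref{eq:propRE01} (the $\underline{w}(0,r^n)$ terms cancel), obtaining $k^{-\gamma}N_w(n)\le C'r^{(d-2\gamma)n}e^{(b(1-\gamma)-a)r^{\alpha(n-1)}+\overline{w}(0,1)}$, with $\varepsilon$ absorbing only the polynomial. This two-sided use of the first-passage structure at the single scale $n_{\ell(k)}$ ($N_w\lesssim k\,e^{-ar^{\alpha(n-1)}}$ from failure, $k\lesssim e^{br^{\alpha(n-1)}}$ from success) is the missing idea; your scheme (lower bound on $k$ plus a crude absolute bound on $N_w$) is structurally unable to reach the claimed rate. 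In fairness, your honest bookkeeping with $n'$ does expose that the paper's transfer of Lemma~\ref{lem:w3} to the subsequence index — which tacitly uses threshold failure at the integer predecessor $n_{\ell(k)}-1$ rather than at $n_{\ell(k)-1}$ — is asserted without comment; but it is the paper's mechanism, not yours, that produces the stated exponent.
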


\begin{rem}
The assumption (\ref{mainassump}) implies that the probability that $w$ looks like in the following figure is positive.

\vspace{3mm}
\begin{center}
{\unitlength 0.1in%
\begin{picture}(39.3000,31.3000)(3.4000,-34.5000)%
\put(7.7000,-22.9000){\makebox(0,0)[rt]{O}}%
\put(7.4000,-3.2000){\makebox(0,0)[rt]{$y$}}%
\put(42.7000,-23.2000){\makebox(0,0)[rt]{$x$}}%
%
\special{pn 8}%
\special{pa 780 3450}%
\special{pa 780 320}%
\special{fp}%
\special{sh 1}%
\special{pa 780 320}%
\special{pa 760 387}%
\special{pa 780 373}%
\special{pa 800 387}%
\special{pa 780 320}%
\special{fp}%
%
\special{pn 8}%
\special{pa 340 2280}%
\special{pa 4270 2280}%
\special{fp}%
\special{sh 1}%
\special{pa 4270 2280}%
\special{pa 4203 2260}%
\special{pa 4217 2280}%
\special{pa 4203 2300}%
\special{pa 4270 2280}%
\special{fp}%
\special{pn 8}%
\special{pn 8}%
\special{pa 340 2357}%
\special{pa 344 2364}%
\special{ip}%
\special{pa 355 2399}%
\special{pa 357 2407}%
\special{ip}%
\special{pa 366 2443}%
\special{pa 368 2451}%
\special{ip}%
\special{pa 379 2487}%
\special{pa 380 2490}%
\special{pa 382 2494}%
\special{ip}%
\special{pa 403 2479}%
\special{pa 405 2472}%
\special{pa 405 2471}%
\special{ip}%
\special{pa 414 2435}%
\special{pa 415 2428}%
\special{pa 415 2427}%
\special{ip}%
\special{pa 422 2391}%
\special{pa 424 2383}%
\special{ip}%
\special{pa 431 2346}%
\special{pa 432 2338}%
\special{ip}%
\special{pa 442 2302}%
\special{pa 444 2295}%
\special{ip}%
\special{pa 466 2301}%
\special{pa 469 2309}%
\special{ip}%
\special{pa 478 2345}%
\special{pa 480 2352}%
\special{ip}%
\special{pa 488 2388}%
\special{pa 490 2395}%
\special{pa 490 2396}%
\special{ip}%
\special{pa 503 2431}%
\special{pa 505 2434}%
\special{pa 509 2436}%
\special{ip}%
\special{pa 526 2406}%
\special{pa 528 2398}%
\special{ip}%
\special{pa 535 2362}%
\special{pa 537 2354}%
\special{ip}%
\special{pa 544 2317}%
\special{pa 545 2313}%
\special{pa 546 2309}%
\special{ip}%
\special{pa 553 2273}%
\special{pa 555 2266}%
\special{pa 555 2265}%
\special{ip}%
\special{pa 569 2231}%
\special{pa 570 2230}%
\special{pa 575 2229}%
\special{pa 576 2230}%
\special{ip}%
\special{pa 590 2264}%
\special{pa 592 2272}%
\special{ip}%
\special{pa 601 2308}%
\special{pa 602 2316}%
\special{ip}%
\special{pa 611 2352}%
\special{pa 613 2360}%
\special{ip}%
\special{pa 630 2392}%
\special{pa 630 2392}%
\special{pa 635 2389}%
\special{pa 636 2387}%
\special{ip}%
\special{pa 648 2352}%
\special{pa 650 2344}%
\special{ip}%
\special{pa 658 2308}%
\special{pa 659 2300}%
\special{ip}%
\special{pa 667 2264}%
\special{pa 668 2256}%
\special{ip}%
\special{pa 678 2220}%
\special{pa 680 2212}%
\special{ip}%
\special{pa 701 2209}%
\special{pa 704 2216}%
\special{ip}%
\special{pa 713 2252}%
\special{pa 715 2259}%
\special{pa 715 2260}%
\special{ip}%
\special{pa 723 2296}%
\special{pa 725 2304}%
\special{ip}%
\special{pa 733 2340}%
\special{pa 735 2346}%
\special{pa 736 2348}%
\special{ip}%
\special{pa 756 2367}%
\special{pa 760 2360}%
\special{pa 760 2360}%
\special{ip}%
\special{pa 770 2324}%
\special{pa 772 2316}%
\special{ip}%
\special{pa 780 2280}%
\special{pa 785 2256}%
\special{pa 790 2234}%
\special{pa 795 2215}%
\special{pa 800 2201}%
\special{pa 805 2192}%
\special{pa 810 2190}%
\special{pa 815 2193}%
\special{pa 820 2203}%
\special{pa 825 2218}%
\special{pa 830 2238}%
\special{pa 835 2261}%
\special{pa 845 2309}%
\special{pa 850 2332}%
\special{pa 855 2351}%
\special{pa 860 2366}%
\special{pa 865 2375}%
\special{pa 870 2379}%
\special{pa 875 2376}%
\special{pa 880 2367}%
\special{pa 885 2353}%
\special{pa 890 2335}%
\special{pa 900 2291}%
\special{pa 905 2268}%
\special{pa 910 2247}%
\special{pa 915 2230}%
\special{pa 920 2218}%
\special{pa 925 2211}%
\special{pa 930 2210}%
\special{pa 935 2215}%
\special{pa 940 2227}%
\special{pa 945 2244}%
\special{pa 950 2265}%
\special{pa 955 2289}%
\special{pa 960 2315}%
\special{pa 965 2340}%
\special{pa 970 2363}%
\special{pa 975 2383}%
\special{pa 980 2398}%
\special{pa 985 2408}%
\special{pa 990 2412}%
\special{pa 995 2409}%
\special{pa 1000 2401}%
\special{pa 1005 2387}%
\special{pa 1010 2369}%
\special{pa 1020 2327}%
\special{pa 1025 2305}%
\special{pa 1030 2286}%
\special{pa 1035 2270}%
\special{pa 1040 2259}%
\special{pa 1045 2254}%
\special{pa 1050 2255}%
\special{pa 1055 2262}%
\special{pa 1060 2275}%
\special{pa 1065 2294}%
\special{pa 1070 2316}%
\special{pa 1075 2341}%
\special{pa 1080 2368}%
\special{pa 1085 2394}%
\special{pa 1090 2418}%
\special{pa 1095 2438}%
\special{pa 1100 2453}%
\special{pa 1105 2463}%
\special{pa 1110 2467}%
\special{pa 1115 2464}%
\special{pa 1120 2456}%
\special{pa 1125 2443}%
\special{pa 1130 2425}%
\special{pa 1135 2405}%
\special{pa 1145 2363}%
\special{pa 1150 2345}%
\special{pa 1155 2331}%
\special{pa 1160 2321}%
\special{pa 1165 2317}%
\special{pa 1170 2320}%
\special{pa 1175 2328}%
\special{pa 1180 2343}%
\special{pa 1185 2362}%
\special{pa 1190 2386}%
\special{pa 1195 2412}%
\special{pa 1200 2439}%
\special{pa 1205 2465}%
\special{pa 1210 2489}%
\special{pa 1215 2510}%
\special{pa 1220 2525}%
\special{pa 1225 2535}%
\special{pa 1230 2538}%
\special{pa 1235 2535}%
\special{pa 1240 2527}%
\special{pa 1245 2513}%
\special{pa 1250 2496}%
\special{pa 1255 2476}%
\special{pa 1260 2455}%
\special{pa 1265 2435}%
\special{pa 1270 2418}%
\special{pa 1275 2404}%
\special{pa 1280 2396}%
\special{pa 1285 2393}%
\special{pa 1290 2397}%
\special{pa 1295 2406}%
\special{pa 1300 2422}%
\special{pa 1305 2442}%
\special{pa 1310 2466}%
\special{pa 1325 2547}%
\special{pa 1330 2571}%
\special{pa 1335 2590}%
\special{pa 1340 2605}%
\special{pa 1345 2614}%
\special{pa 1350 2617}%
\special{pa 1355 2614}%
\special{pa 1360 2605}%
\special{pa 1365 2590}%
\special{pa 1370 2573}%
\special{pa 1375 2553}%
\special{pa 1380 2532}%
\special{pa 1385 2512}%
\special{pa 1390 2495}%
\special{pa 1395 2482}%
\special{pa 1400 2474}%
\special{pa 1405 2472}%
\special{pa 1410 2477}%
\special{pa 1415 2487}%
\special{pa 1420 2503}%
\special{pa 1425 2524}%
\special{pa 1430 2548}%
\special{pa 1440 2602}%
\special{pa 1445 2628}%
\special{pa 1450 2651}%
\special{pa 1455 2670}%
\special{pa 1460 2684}%
\special{pa 1465 2692}%
\special{pa 1470 2694}%
\special{pa 1475 2689}%
\special{pa 1480 2679}%
\special{pa 1485 2664}%
\special{pa 1490 2646}%
\special{pa 1500 2604}%
\special{pa 1505 2584}%
\special{pa 1510 2567}%
\special{pa 1515 2554}%
\special{pa 1520 2547}%
\special{pa 1525 2545}%
\special{pa 1530 2549}%
\special{pa 1535 2560}%
\special{pa 1540 2576}%
\special{pa 1545 2597}%
\special{pa 1550 2621}%
\special{pa 1555 2647}%
\special{pa 1560 2674}%
\special{pa 1565 2699}%
\special{pa 1570 2721}%
\special{pa 1575 2739}%
\special{pa 1580 2751}%
\special{pa 1585 2758}%
\special{pa 1590 2758}%
\special{pa 1595 2752}%
\special{pa 1600 2740}%
\special{pa 1605 2724}%
\special{pa 1610 2705}%
\special{pa 1620 2661}%
\special{pa 1625 2641}%
\special{pa 1630 2623}%
\special{pa 1635 2610}%
\special{pa 1640 2602}%
\special{pa 1645 2600}%
\special{pa 1650 2605}%
\special{pa 1655 2615}%
\special{pa 1660 2631}%
\special{pa 1665 2651}%
\special{pa 1670 2675}%
\special{pa 1680 2725}%
\special{pa 1685 2749}%
\special{pa 1690 2770}%
\special{pa 1695 2786}%
\special{pa 1700 2797}%
\special{pa 1705 2801}%
\special{pa 1710 2799}%
\special{pa 1715 2792}%
\special{pa 1720 2778}%
\special{pa 1725 2760}%
\special{pa 1730 2739}%
\special{pa 1735 2716}%
\special{pa 1745 2672}%
\special{pa 1750 2654}%
\special{pa 1755 2640}%
\special{pa 1760 2632}%
\special{pa 1765 2629}%
\special{pa 1770 2633}%
\special{pa 1775 2643}%
\special{pa 1780 2658}%
\special{pa 1785 2677}%
\special{pa 1790 2700}%
\special{pa 1800 2748}%
\special{pa 1805 2770}%
\special{pa 1810 2788}%
\special{pa 1815 2803}%
\special{pa 1820 2811}%
\special{pa 1825 2814}%
\special{pa 1830 2810}%
\special{pa 1835 2800}%
\special{pa 1840 2784}%
\special{pa 1845 2764}%
\special{pa 1850 2742}%
\special{pa 1855 2717}%
\special{pa 1860 2693}%
\special{pa 1865 2670}%
\special{pa 1870 2651}%
\special{pa 1875 2636}%
\special{pa 1880 2627}%
\special{pa 1885 2624}%
\special{pa 1890 2627}%
\special{pa 1895 2636}%
\special{pa 1900 2650}%
\special{pa 1905 2668}%
\special{pa 1910 2689}%
\special{pa 1915 2712}%
\special{pa 1920 2734}%
\special{pa 1925 2754}%
\special{pa 1930 2771}%
\special{pa 1935 2782}%
\special{pa 1940 2789}%
\special{pa 1945 2789}%
\special{pa 1950 2782}%
\special{pa 1955 2770}%
\special{pa 1960 2752}%
\special{pa 1965 2730}%
\special{pa 1970 2705}%
\special{pa 1980 2653}%
\special{pa 1985 2629}%
\special{pa 1990 2609}%
\special{pa 1995 2593}%
\special{pa 2000 2583}%
\special{pa 2005 2579}%
\special{pa 2010 2581}%
\special{pa 2015 2589}%
\special{pa 2020 2602}%
\special{pa 2025 2619}%
\special{pa 2030 2638}%
\special{pa 2035 2659}%
\special{pa 2040 2679}%
\special{pa 2045 2697}%
\special{pa 2050 2711}%
\special{pa 2055 2721}%
\special{pa 2060 2725}%
\special{pa 2065 2722}%
\special{pa 2070 2713}%
\special{pa 2075 2698}%
\special{pa 2080 2678}%
\special{pa 2085 2654}%
\special{pa 2090 2627}%
\special{pa 2095 2599}%
\special{pa 2100 2572}%
\special{pa 2105 2547}%
\special{pa 2110 2525}%
\special{pa 2115 2508}%
\special{pa 2120 2497}%
\special{pa 2125 2492}%
\special{pa 2130 2493}%
\special{pa 2135 2499}%
\special{pa 2140 2511}%
\special{pa 2145 2527}%
\special{pa 2150 2545}%
\special{pa 2155 2564}%
\special{pa 2160 2582}%
\special{pa 2165 2598}%
\special{pa 2170 2610}%
\special{pa 2175 2617}%
\special{pa 2180 2619}%
\special{pa 2185 2614}%
\special{pa 2190 2602}%
\special{pa 2195 2585}%
\special{pa 2200 2563}%
\special{pa 2205 2537}%
\special{pa 2210 2508}%
\special{pa 2215 2478}%
\special{pa 2220 2449}%
\special{pa 2225 2423}%
\special{pa 2230 2400}%
\special{pa 2235 2382}%
\special{pa 2240 2370}%
\special{pa 2245 2364}%
\special{pa 2250 2364}%
\special{pa 2255 2370}%
\special{pa 2260 2381}%
\special{pa 2265 2395}%
\special{pa 2280 2446}%
\special{pa 2285 2460}%
\special{pa 2290 2470}%
\special{pa 2295 2475}%
\special{pa 2300 2474}%
\special{pa 2305 2466}%
\special{pa 2310 2453}%
\special{pa 2315 2433}%
\special{pa 2320 2409}%
\special{pa 2325 2381}%
\special{pa 2330 2351}%
\special{pa 2340 2289}%
\special{pa 2345 2262}%
\special{pa 2350 2238}%
\special{pa 2355 2220}%
\special{pa 2360 2207}%
\special{pa 2365 2200}%
\special{pa 2370 2200}%
\special{pa 2375 2205}%
\special{pa 2380 2215}%
\special{pa 2385 2229}%
\special{pa 2390 2244}%
\special{pa 2400 2276}%
\special{pa 2405 2288}%
\special{pa 2410 2296}%
\special{pa 2415 2299}%
\special{pa 2420 2296}%
\special{pa 2425 2287}%
\special{pa 2430 2271}%
\special{pa 2435 2250}%
\special{pa 2440 2224}%
\special{pa 2445 2195}%
\special{pa 2455 2131}%
\special{pa 2460 2100}%
\special{pa 2465 2072}%
\special{pa 2470 2048}%
\special{pa 2475 2029}%
\special{pa 2480 2016}%
\special{pa 2485 2009}%
\special{pa 2490 2008}%
\special{pa 2495 2013}%
\special{pa 2500 2023}%
\special{pa 2505 2036}%
\special{pa 2510 2051}%
\special{pa 2515 2067}%
\special{pa 2520 2080}%
\special{pa 2525 2092}%
\special{pa 2530 2098}%
\special{pa 2535 2100}%
\special{pa 2540 2096}%
\special{pa 2545 2085}%
\special{pa 2550 2068}%
\special{pa 2555 2045}%
\special{pa 2560 2018}%
\special{pa 2565 1988}%
\special{pa 2570 1956}%
\special{pa 2575 1923}%
\special{pa 2580 1892}%
\special{pa 2585 1863}%
\special{pa 2590 1839}%
\special{pa 2595 1821}%
\special{pa 2600 1808}%
\special{pa 2605 1802}%
\special{pa 2610 1802}%
\special{pa 2615 1807}%
\special{pa 2620 1817}%
\special{pa 2625 1830}%
\special{pa 2635 1860}%
\special{pa 2640 1873}%
\special{pa 2645 1883}%
\special{pa 2650 1890}%
\special{pa 2655 1890}%
\special{pa 2660 1885}%
\special{pa 2665 1873}%
\special{pa 2670 1855}%
\special{pa 2675 1832}%
\special{pa 2680 1805}%
\special{pa 2685 1774}%
\special{pa 2690 1741}%
\special{pa 2695 1709}%
\special{pa 2700 1678}%
\special{pa 2705 1650}%
\special{pa 2710 1627}%
\special{pa 2715 1609}%
\special{pa 2720 1598}%
\special{pa 2725 1592}%
\special{pa 2730 1593}%
\special{pa 2735 1599}%
\special{pa 2740 1610}%
\special{pa 2745 1623}%
\special{pa 2750 1639}%
\special{pa 2755 1654}%
\special{pa 2760 1667}%
\special{pa 2765 1678}%
\special{pa 2770 1684}%
\special{pa 2775 1684}%
\special{pa 2780 1678}%
\special{pa 2785 1666}%
\special{pa 2790 1649}%
\special{pa 2795 1625}%
\special{pa 2800 1598}%
\special{pa 2805 1567}%
\special{pa 2810 1535}%
\special{pa 2815 1504}%
\special{pa 2820 1474}%
\special{pa 2825 1447}%
\special{pa 2830 1426}%
\special{pa 2835 1409}%
\special{pa 2840 1399}%
\special{pa 2845 1395}%
\special{pa 2850 1398}%
\special{pa 2855 1405}%
\special{pa 2860 1417}%
\special{pa 2865 1432}%
\special{pa 2870 1449}%
\special{pa 2875 1465}%
\special{pa 2880 1479}%
\special{pa 2885 1490}%
\special{pa 2890 1496}%
\special{pa 2895 1497}%
\special{pa 2900 1492}%
\special{pa 2905 1480}%
\special{pa 2910 1463}%
\special{pa 2915 1440}%
\special{pa 2920 1414}%
\special{pa 2925 1384}%
\special{pa 2930 1353}%
\special{pa 2935 1323}%
\special{pa 2940 1295}%
\special{pa 2945 1270}%
\special{pa 2950 1251}%
\special{pa 2955 1237}%
\special{pa 2960 1229}%
\special{pa 2965 1227}%
\special{pa 2970 1232}%
\special{pa 2975 1241}%
\special{pa 2980 1255}%
\special{pa 2985 1272}%
\special{pa 2995 1308}%
\special{pa 3000 1323}%
\special{pa 3005 1336}%
\special{pa 3010 1343}%
\special{pa 3015 1345}%
\special{pa 3020 1340}%
\special{pa 3025 1330}%
\special{pa 3030 1314}%
\special{pa 3035 1292}%
\special{pa 3040 1267}%
\special{pa 3050 1211}%
\special{pa 3055 1182}%
\special{pa 3060 1157}%
\special{pa 3065 1135}%
\special{pa 3070 1117}%
\special{pa 3075 1106}%
\special{pa 3080 1101}%
\special{pa 3085 1102}%
\special{pa 3090 1110}%
\special{pa 3095 1122}%
\special{pa 3100 1139}%
\special{pa 3105 1158}%
\special{pa 3115 1198}%
\special{pa 3120 1215}%
\special{pa 3125 1229}%
\special{pa 3130 1238}%
\special{pa 3135 1242}%
\special{pa 3140 1239}%
\special{pa 3145 1230}%
\special{pa 3150 1215}%
\special{pa 3155 1196}%
\special{pa 3160 1173}%
\special{pa 3175 1095}%
\special{pa 3180 1072}%
\special{pa 3185 1053}%
\special{pa 3190 1039}%
\special{pa 3195 1031}%
\special{pa 3200 1030}%
\special{pa 3205 1034}%
\special{pa 3210 1045}%
\special{pa 3215 1061}%
\special{pa 3220 1080}%
\special{pa 3225 1102}%
\special{pa 3230 1125}%
\special{pa 3235 1147}%
\special{pa 3240 1167}%
\special{pa 3245 1183}%
\special{pa 3250 1194}%
\special{pa 3255 1200}%
\special{pa 3260 1199}%
\special{pa 3265 1192}%
\special{pa 3270 1180}%
\special{pa 3275 1162}%
\special{pa 3280 1142}%
\special{pa 3285 1119}%
\special{pa 3290 1095}%
\special{pa 3295 1073}%
\special{pa 3300 1053}%
\special{pa 3305 1038}%
\special{pa 3310 1027}%
\special{pa 3315 1023}%
\special{pa 3320 1025}%
\special{pa 3325 1033}%
\special{pa 3330 1047}%
\special{pa 3335 1067}%
\special{pa 3340 1090}%
\special{pa 3345 1115}%
\special{pa 3350 1141}%
\special{pa 3355 1166}%
\special{pa 3360 1188}%
\special{pa 3365 1207}%
\special{pa 3370 1220}%
\special{pa 3375 1228}%
\special{pa 3380 1229}%
\special{pa 3385 1225}%
\special{pa 3390 1215}%
\special{pa 3395 1200}%
\special{pa 3400 1182}%
\special{pa 3405 1162}%
\special{pa 3410 1141}%
\special{pa 3415 1122}%
\special{pa 3420 1106}%
\special{pa 3425 1094}%
\special{pa 3430 1088}%
\special{pa 3435 1087}%
\special{pa 3440 1093}%
\special{pa 3445 1105}%
\special{pa 3450 1123}%
\special{pa 3455 1146}%
\special{pa 3460 1173}%
\special{pa 3465 1201}%
\special{pa 3470 1230}%
\special{pa 3475 1258}%
\special{pa 3480 1283}%
\special{pa 3485 1304}%
\special{pa 3490 1320}%
\special{pa 3495 1330}%
\special{pa 3500 1334}%
\special{pa 3505 1332}%
\special{pa 3510 1324}%
\special{pa 3515 1312}%
\special{pa 3520 1297}%
\special{pa 3530 1263}%
\special{pa 3535 1247}%
\special{pa 3540 1234}%
\special{pa 3545 1226}%
\special{pa 3550 1224}%
\special{pa 3555 1227}%
\special{pa 3560 1237}%
\special{pa 3565 1253}%
\special{pa 3570 1275}%
\special{pa 3575 1301}%
\special{pa 3580 1331}%
\special{pa 3590 1395}%
\special{pa 3595 1425}%
\special{pa 3600 1453}%
\special{pa 3605 1476}%
\special{pa 3610 1495}%
\special{pa 3615 1507}%
\special{pa 3620 1513}%
\special{pa 3625 1513}%
\special{pa 3630 1508}%
\special{pa 3635 1498}%
\special{pa 3640 1486}%
\special{pa 3645 1472}%
\special{pa 3650 1457}%
\special{pa 3655 1445}%
\special{pa 3660 1436}%
\special{pa 3665 1431}%
\special{pa 3670 1432}%
\special{pa 3675 1439}%
\special{pa 3680 1453}%
\special{pa 3685 1472}%
\special{pa 3690 1498}%
\special{pa 3695 1527}%
\special{pa 3700 1560}%
\special{pa 3710 1630}%
\special{pa 3715 1663}%
\special{pa 3720 1693}%
\special{pa 3725 1718}%
\special{pa 3730 1738}%
\special{pa 3735 1753}%
\special{pa 3740 1761}%
\special{pa 3745 1763}%
\special{pa 3750 1760}%
\special{pa 3755 1752}%
\special{pa 3760 1742}%
\special{pa 3765 1730}%
\special{pa 3770 1719}%
\special{pa 3775 1709}%
\special{pa 3780 1703}%
\special{pa 3785 1702}%
\special{pa 3790 1706}%
\special{pa 3795 1716}%
\special{pa 3800 1733}%
\special{pa 3805 1756}%
\special{pa 3810 1784}%
\special{pa 3815 1816}%
\special{pa 3820 1852}%
\special{pa 3830 1926}%
\special{pa 3835 1961}%
\special{pa 3840 1993}%
\special{pa 3845 2020}%
\special{pa 3850 2042}%
\special{pa 3855 2057}%
\special{pa 3860 2067}%
\special{pa 3865 2070}%
\special{pa 3870 2069}%
\special{pa 3875 2063}%
\special{pa 3885 2045}%
\special{pa 3890 2035}%
\special{pa 3895 2028}%
\special{pa 3900 2024}%
\special{pa 3905 2025}%
\special{pa 3910 2032}%
\special{pa 3915 2045}%
\special{pa 3920 2064}%
\special{pa 3925 2090}%
\special{pa 3930 2120}%
\special{pa 3935 2155}%
\special{pa 3945 2231}%
\special{pa 3950 2270}%
\special{pa 3955 2306}%
\special{pa 3960 2339}%
\special{pa 3965 2367}%
\special{pa 3970 2389}%
\special{pa 3975 2406}%
\special{pa 3980 2416}%
\special{pa 3985 2420}%
\special{pa 3990 2419}%
\special{pa 3995 2414}%
\special{pa 4000 2407}%
\special{pa 4010 2391}%
\special{pa 4015 2385}%
\special{pa 4020 2383}%
\special{pa 4025 2386}%
\special{pa 4030 2394}%
\special{pa 4035 2409}%
\special{pa 4040 2430}%
\special{fp}%
\special{pn 8}%
\special{pa 4041 2438}%
\special{pa 4045 2457}%
\special{pa 4048 2475}%
\special{ip}%
\special{pa 4049 2483}%
\special{pa 4050 2489}%
\special{pa 4054 2520}%
\special{ip}%
\special{pa 4055 2529}%
\special{pa 4060 2564}%
\special{pa 4060 2566}%
\special{ip}%
\special{pa 4061 2574}%
\special{pa 4065 2604}%
\special{pa 4066 2611}%
\special{ip}%
\special{pa 4067 2619}%
\special{pa 4070 2643}%
\special{pa 4072 2657}%
\special{ip}%
\special{pa 4073 2665}%
\special{pa 4075 2680}%
\special{pa 4078 2702}%
\special{ip}%
\special{pa 4080 2710}%
\special{pa 4080 2713}%
\special{pa 4085 2741}%
\special{pa 4086 2747}%
\special{ip}%
\special{pa 4088 2755}%
\special{pa 4090 2763}%
\special{pa 4095 2779}%
\special{pa 4100 2789}%
\special{pa 4101 2790}%
\special{ip}%
\special{pa 4108 2792}%
\special{pa 4110 2792}%
\special{pa 4115 2788}%
\special{pa 4125 2772}%
\special{pa 4130 2765}%
\special{pa 4131 2764}%
\special{ip}%
\special{pa 4138 2759}%
\special{pa 4140 2759}%
\special{pa 4145 2762}%
\special{pa 4150 2772}%
\special{pa 4155 2788}%
\special{pa 4155 2790}%
\special{ip}%
\special{pa 4157 2798}%
\special{pa 4160 2810}%
\special{pa 4165 2835}%
\special{ip}%
\special{pa 4166 2843}%
\special{pa 4170 2870}%
\special{pa 4171 2880}%
\special{ip}%
\special{pa 4172 2888}%
\special{pa 4175 2907}%
\special{pa 4177 2926}%
\special{ip}%
\special{pa 4178 2934}%
\special{pa 4183 2971}%
\special{ip}%
\special{pa 4184 2979}%
\special{pa 4189 3016}%
\special{ip}%
\special{pa 4190 3024}%
\special{pa 4195 3060}%
\special{pa 4195 3062}%
\special{ip}%
\special{pa 4197 3070}%
\special{pa 4200 3092}%
\special{pa 4203 3107}%
\special{ip}%
\special{pa 4204 3115}%
\special{pa 4205 3120}%
\special{pa 4210 3141}%
\special{pa 4213 3151}%
\special{ip}%
\special{pa 4217 3159}%
\special{pa 4220 3165}%
\special{pa 4225 3168}%
\special{pa 4230 3166}%
\special{pa 4235 3160}%
\special{pa 4241 3150}%
\special{ip}%
\special{pa 4245 3143}%
\special{pa 4250 3136}%
\special{pa 4255 3131}%
\special{pa 4260 3129}%
\special{pa 4265 3133}%
\special{pa 4270 3142}%
\special{ip}%
\special{pn 8}%
\special{pn 8}%
\special{pa 340 2864}%
\special{pa 349 2864}%
\special{fp}%
\special{pa 389 2864}%
\special{pa 398 2864}%
\special{fp}%
\special{pa 438 2864}%
\special{pa 446 2864}%
\special{fp}%
\special{pa 487 2864}%
\special{pa 495 2864}%
\special{fp}%
\special{pa 536 2864}%
\special{pa 544 2864}%
\special{fp}%
\special{pa 584 2864}%
\special{pa 593 2864}%
\special{fp}%
\special{pa 633 2864}%
\special{pa 642 2864}%
\special{fp}%
\special{pa 682 2864}%
\special{pa 691 2864}%
\special{fp}%
\special{pa 731 2864}%
\special{pa 740 2864}%
\special{fp}%
\special{pa 780 2864}%
\special{pa 780 2864}%
\special{fp}%
\special{pa 780 2864}%
\special{pa 2830 2864}%
\special{fp}%
\special{pn 8}%
\special{pa 2838 2864}%
\special{pa 2875 2864}%
\special{fp}%
\special{pa 2883 2864}%
\special{pa 2920 2864}%
\special{fp}%
\special{pa 2928 2864}%
\special{pa 2965 2864}%
\special{fp}%
\special{pa 2973 2864}%
\special{pa 3010 2864}%
\special{fp}%
\special{pa 3018 2864}%
\special{pa 3055 2864}%
\special{fp}%
\special{pa 3063 2864}%
\special{pa 3100 2864}%
\special{fp}%
\special{pa 3108 2864}%
\special{pa 3145 2864}%
\special{fp}%
\special{pa 3153 2864}%
\special{pa 3190 2864}%
\special{fp}%
\special{pa 3198 2864}%
\special{pa 3235 2864}%
\special{fp}%
\special{pa 3243 2864}%
\special{pa 3280 2864}%
\special{fp}%
\special{pa 3288 2864}%
\special{pa 3325 2864}%
\special{fp}%
\special{pa 3333 2864}%
\special{pa 3370 2864}%
\special{fp}%
\special{pa 3378 2864}%
\special{pa 3415 2864}%
\special{fp}%
\special{pa 3423 2864}%
\special{pa 3460 2864}%
\special{fp}%
\special{pa 3468 2864}%
\special{pa 3505 2864}%
\special{fp}%
\special{pa 3513 2864}%
\special{pa 3550 2864}%
\special{fp}%
\special{pa 3558 2864}%
\special{pa 3595 2864}%
\special{fp}%
\special{pa 3603 2864}%
\special{pa 3640 2864}%
\special{fp}%
\special{pa 3648 2864}%
\special{pa 3685 2864}%
\special{fp}%
\special{pa 3693 2864}%
\special{pa 3730 2864}%
\special{fp}%
\special{pa 3738 2864}%
\special{pa 3775 2864}%
\special{fp}%
\special{pa 3783 2864}%
\special{pa 3820 2864}%
\special{fp}%
\special{pa 3828 2864}%
\special{pa 3865 2864}%
\special{fp}%
\special{pa 3873 2864}%
\special{pa 3910 2864}%
\special{fp}%
\special{pa 3918 2864}%
\special{pa 3955 2864}%
\special{fp}%
\special{pa 3963 2864}%
\special{pa 4000 2864}%
\special{fp}%
\special{pa 4008 2864}%
\special{pa 4045 2864}%
\special{fp}%
\special{pa 4053 2864}%
\special{pa 4090 2864}%
\special{fp}%
\special{pa 4098 2864}%
\special{pa 4135 2864}%
\special{fp}%
\special{pa 4143 2864}%
\special{pa 4180 2864}%
\special{fp}%
\special{pa 4188 2864}%
\special{pa 4225 2864}%
\special{fp}%
\special{pa 4233 2864}%
\special{pa 4270 2864}%
\special{fp}%
\special{pn 8}%
\special{pn 8}%
\special{pa 340 1550}%
\special{pa 348 1550}%
\special{fp}%
\special{pa 385 1550}%
\special{pa 393 1550}%
\special{fp}%
\special{pa 431 1550}%
\special{pa 439 1550}%
\special{fp}%
\special{pa 476 1550}%
\special{pa 484 1550}%
\special{fp}%
\special{pa 521 1550}%
\special{pa 529 1550}%
\special{fp}%
\special{pa 566 1550}%
\special{pa 574 1550}%
\special{fp}%
\special{pa 612 1550}%
\special{pa 620 1550}%
\special{fp}%
\special{pa 657 1550}%
\special{pa 665 1550}%
\special{fp}%
\special{pa 702 1550}%
\special{pa 710 1550}%
\special{fp}%
\special{pa 747 1550}%
\special{pa 756 1550}%
\special{fp}%
\special{pa 793 1550}%
\special{pa 801 1550}%
\special{fp}%
\special{pa 838 1550}%
\special{pa 846 1550}%
\special{fp}%
\special{pa 883 1550}%
\special{pa 891 1550}%
\special{fp}%
\special{pa 929 1550}%
\special{pa 937 1550}%
\special{fp}%
\special{pa 974 1550}%
\special{pa 982 1550}%
\special{fp}%
\special{pa 1019 1550}%
\special{pa 1027 1550}%
\special{fp}%
\special{pa 1064 1550}%
\special{pa 1072 1550}%
\special{fp}%
\special{pa 1110 1550}%
\special{pa 1118 1550}%
\special{fp}%
\special{pa 1155 1550}%
\special{pa 1163 1550}%
\special{fp}%
\special{pa 1200 1550}%
\special{pa 1208 1550}%
\special{fp}%
\special{pa 1245 1550}%
\special{pa 1254 1550}%
\special{fp}%
\special{pa 1291 1550}%
\special{pa 1299 1550}%
\special{fp}%
\special{pa 1336 1550}%
\special{pa 1344 1550}%
\special{fp}%
\special{pa 1381 1550}%
\special{pa 1389 1550}%
\special{fp}%
\special{pa 1427 1550}%
\special{pa 1435 1550}%
\special{fp}%
\special{pa 1472 1550}%
\special{pa 1480 1550}%
\special{fp}%
\special{pa 1517 1550}%
\special{pa 1525 1550}%
\special{fp}%
\special{pa 1562 1550}%
\special{pa 1570 1550}%
\special{fp}%
\special{pa 1608 1550}%
\special{pa 1616 1550}%
\special{fp}%
\special{pa 1653 1550}%
\special{pa 1661 1550}%
\special{fp}%
\special{pa 1698 1550}%
\special{pa 1706 1550}%
\special{fp}%
\special{pa 1743 1550}%
\special{pa 1752 1550}%
\special{fp}%
\special{pa 1789 1550}%
\special{pa 1797 1550}%
\special{fp}%
\special{pa 1834 1550}%
\special{pa 1842 1550}%
\special{fp}%
\special{pa 1879 1550}%
\special{pa 1887 1550}%
\special{fp}%
\special{pa 1925 1550}%
\special{pa 1933 1550}%
\special{fp}%
\special{pa 1970 1550}%
\special{pa 1978 1550}%
\special{fp}%
\special{pa 2015 1550}%
\special{pa 2023 1550}%
\special{fp}%
\special{pa 2060 1550}%
\special{pa 2068 1550}%
\special{fp}%
\special{pa 2106 1550}%
\special{pa 2114 1550}%
\special{fp}%
\special{pa 2151 1550}%
\special{pa 2159 1550}%
\special{fp}%
\special{pa 2196 1550}%
\special{pa 2204 1550}%
\special{fp}%
\special{pa 2241 1550}%
\special{pa 2250 1550}%
\special{fp}%
\special{pa 2287 1550}%
\special{pa 2295 1550}%
\special{fp}%
\special{pa 2332 1550}%
\special{pa 2340 1550}%
\special{fp}%
\special{pa 2377 1550}%
\special{pa 2385 1550}%
\special{fp}%
\special{pa 2423 1550}%
\special{pa 2431 1550}%
\special{fp}%
\special{pa 2468 1550}%
\special{pa 2476 1550}%
\special{fp}%
\special{pa 2513 1550}%
\special{pa 2521 1550}%
\special{fp}%
\special{pa 2558 1550}%
\special{pa 2566 1550}%
\special{fp}%
\special{pa 2604 1550}%
\special{pa 2612 1550}%
\special{fp}%
\special{pa 2649 1550}%
\special{pa 2657 1550}%
\special{fp}%
\special{pa 2694 1550}%
\special{pa 2702 1550}%
\special{fp}%
\special{pa 2739 1550}%
\special{pa 2748 1550}%
\special{fp}%
\special{pa 2785 1550}%
\special{pa 2793 1550}%
\special{fp}%
\special{pa 2830 1550}%
\special{pa 2830 1550}%
\special{fp}%
\special{pa 2830 1550}%
\special{pa 3590 1550}%
\special{fp}%
\special{pn 8}%
\special{pa 3598 1550}%
\special{pa 3635 1550}%
\special{fp}%
\special{pa 3643 1550}%
\special{pa 3681 1550}%
\special{fp}%
\special{pa 3689 1550}%
\special{pa 3726 1550}%
\special{fp}%
\special{pa 3734 1550}%
\special{pa 3771 1550}%
\special{fp}%
\special{pa 3779 1550}%
\special{pa 3817 1550}%
\special{fp}%
\special{pa 3825 1550}%
\special{pa 3862 1550}%
\special{fp}%
\special{pa 3870 1550}%
\special{pa 3907 1550}%
\special{fp}%
\special{pa 3915 1550}%
\special{pa 3953 1550}%
\special{fp}%
\special{pa 3961 1550}%
\special{pa 3998 1550}%
\special{fp}%
\special{pa 4006 1550}%
\special{pa 4043 1550}%
\special{fp}%
\special{pa 4051 1550}%
\special{pa 4089 1550}%
\special{fp}%
\special{pa 4097 1550}%
\special{pa 4134 1550}%
\special{fp}%
\special{pa 4142 1550}%
\special{pa 4179 1550}%
\special{fp}%
\special{pa 4187 1550}%
\special{pa 4225 1550}%
\special{fp}%
\special{pa 4233 1550}%
\special{pa 4270 1550}%
\special{fp}%
\special{pn 8}%
\special{pn 8}%
\special{pa 340 1002}%
\special{pa 348 1002}%
\special{fp}%
\special{pa 385 1002}%
\special{pa 393 1002}%
\special{fp}%
\special{pa 431 1002}%
\special{pa 439 1002}%
\special{fp}%
\special{pa 476 1002}%
\special{pa 484 1002}%
\special{fp}%
\special{pa 521 1002}%
\special{pa 529 1002}%
\special{fp}%
\special{pa 566 1002}%
\special{pa 574 1002}%
\special{fp}%
\special{pa 612 1002}%
\special{pa 620 1002}%
\special{fp}%
\special{pa 657 1002}%
\special{pa 665 1002}%
\special{fp}%
\special{pa 702 1002}%
\special{pa 710 1002}%
\special{fp}%
\special{pa 747 1002}%
\special{pa 756 1002}%
\special{fp}%
\special{pa 793 1002}%
\special{pa 801 1002}%
\special{fp}%
\special{pa 838 1002}%
\special{pa 846 1002}%
\special{fp}%
\special{pa 883 1002}%
\special{pa 891 1002}%
\special{fp}%
\special{pa 929 1002}%
\special{pa 937 1002}%
\special{fp}%
\special{pa 974 1002}%
\special{pa 982 1002}%
\special{fp}%
\special{pa 1019 1002}%
\special{pa 1027 1002}%
\special{fp}%
\special{pa 1064 1002}%
\special{pa 1072 1002}%
\special{fp}%
\special{pa 1110 1002}%
\special{pa 1118 1002}%
\special{fp}%
\special{pa 1155 1002}%
\special{pa 1163 1002}%
\special{fp}%
\special{pa 1200 1002}%
\special{pa 1208 1002}%
\special{fp}%
\special{pa 1245 1002}%
\special{pa 1254 1002}%
\special{fp}%
\special{pa 1291 1002}%
\special{pa 1299 1002}%
\special{fp}%
\special{pa 1336 1002}%
\special{pa 1344 1002}%
\special{fp}%
\special{pa 1381 1002}%
\special{pa 1389 1002}%
\special{fp}%
\special{pa 1427 1002}%
\special{pa 1435 1002}%
\special{fp}%
\special{pa 1472 1002}%
\special{pa 1480 1002}%
\special{fp}%
\special{pa 1517 1002}%
\special{pa 1525 1002}%
\special{fp}%
\special{pa 1562 1002}%
\special{pa 1570 1002}%
\special{fp}%
\special{pa 1608 1002}%
\special{pa 1616 1002}%
\special{fp}%
\special{pa 1653 1002}%
\special{pa 1661 1002}%
\special{fp}%
\special{pa 1698 1002}%
\special{pa 1706 1002}%
\special{fp}%
\special{pa 1743 1002}%
\special{pa 1752 1002}%
\special{fp}%
\special{pa 1789 1002}%
\special{pa 1797 1002}%
\special{fp}%
\special{pa 1834 1002}%
\special{pa 1842 1002}%
\special{fp}%
\special{pa 1879 1002}%
\special{pa 1887 1002}%
\special{fp}%
\special{pa 1925 1002}%
\special{pa 1933 1002}%
\special{fp}%
\special{pa 1970 1002}%
\special{pa 1978 1002}%
\special{fp}%
\special{pa 2015 1002}%
\special{pa 2023 1002}%
\special{fp}%
\special{pa 2060 1002}%
\special{pa 2068 1002}%
\special{fp}%
\special{pa 2106 1002}%
\special{pa 2114 1002}%
\special{fp}%
\special{pa 2151 1002}%
\special{pa 2159 1002}%
\special{fp}%
\special{pa 2196 1002}%
\special{pa 2204 1002}%
\special{fp}%
\special{pa 2241 1002}%
\special{pa 2250 1002}%
\special{fp}%
\special{pa 2287 1002}%
\special{pa 2295 1002}%
\special{fp}%
\special{pa 2332 1002}%
\special{pa 2340 1002}%
\special{fp}%
\special{pa 2377 1002}%
\special{pa 2385 1002}%
\special{fp}%
\special{pa 2423 1002}%
\special{pa 2431 1002}%
\special{fp}%
\special{pa 2468 1002}%
\special{pa 2476 1002}%
\special{fp}%
\special{pa 2513 1002}%
\special{pa 2521 1002}%
\special{fp}%
\special{pa 2558 1002}%
\special{pa 2566 1002}%
\special{fp}%
\special{pa 2604 1002}%
\special{pa 2612 1002}%
\special{fp}%
\special{pa 2649 1002}%
\special{pa 2657 1002}%
\special{fp}%
\special{pa 2694 1002}%
\special{pa 2702 1002}%
\special{fp}%
\special{pa 2739 1002}%
\special{pa 2748 1002}%
\special{fp}%
\special{pa 2785 1002}%
\special{pa 2793 1002}%
\special{fp}%
\special{pa 2830 1002}%
\special{pa 2830 1002}%
\special{fp}%
\special{pa 2830 1002}%
\special{pa 3590 1002}%
\special{fp}%
\special{pn 8}%
\special{pa 3598 1002}%
\special{pa 3635 1002}%
\special{fp}%
\special{pa 3643 1002}%
\special{pa 3681 1002}%
\special{fp}%
\special{pa 3689 1002}%
\special{pa 3726 1002}%
\special{fp}%
\special{pa 3734 1002}%
\special{pa 3771 1002}%
\special{fp}%
\special{pa 3779 1002}%
\special{pa 3817 1002}%
\special{fp}%
\special{pa 3825 1002}%
\special{pa 3862 1002}%
\special{fp}%
\special{pa 3870 1002}%
\special{pa 3907 1002}%
\special{fp}%
\special{pa 3915 1002}%
\special{pa 3953 1002}%
\special{fp}%
\special{pa 3961 1002}%
\special{pa 3998 1002}%
\special{fp}%
\special{pa 4006 1002}%
\special{pa 4043 1002}%
\special{fp}%
\special{pa 4051 1002}%
\special{pa 4089 1002}%
\special{fp}%
\special{pa 4097 1002}%
\special{pa 4134 1002}%
\special{fp}%
\special{pa 4142 1002}%
\special{pa 4179 1002}%
\special{fp}%
\special{pa 4187 1002}%
\special{pa 4225 1002}%
\special{fp}%
\special{pa 4233 1002}%
\special{pa 4270 1002}%
\special{fp}%
\special{pn 8}%
\special{pn 8}%
\special{pa 2832 320}%
\special{pa 2832 328}%
\special{fp}%
\special{pa 2832 366}%
\special{pa 2832 374}%
\special{fp}%
\special{pa 2832 411}%
\special{pa 2832 419}%
\special{fp}%
\special{pa 2832 457}%
\special{pa 2832 465}%
\special{fp}%
\special{pa 2832 502}%
\special{pa 2832 510}%
\special{fp}%
\special{pa 2832 548}%
\special{pa 2832 556}%
\special{fp}%
\special{pa 2832 593}%
\special{pa 2832 602}%
\special{fp}%
\special{pa 2832 639}%
\special{pa 2832 647}%
\special{fp}%
\special{pa 2832 685}%
\special{pa 2832 693}%
\special{fp}%
\special{pa 2832 730}%
\special{pa 2832 738}%
\special{fp}%
\special{pa 2832 776}%
\special{pa 2832 784}%
\special{fp}%
\special{pa 2832 821}%
\special{pa 2832 829}%
\special{fp}%
\special{pa 2832 867}%
\special{pa 2832 875}%
\special{fp}%
\special{pa 2832 913}%
\special{pa 2832 921}%
\special{fp}%
\special{pa 2832 958}%
\special{pa 2832 966}%
\special{fp}%
\special{pa 2832 1004}%
\special{pa 2832 1012}%
\special{fp}%
\special{pa 2832 1049}%
\special{pa 2832 1057}%
\special{fp}%
\special{pa 2832 1095}%
\special{pa 2832 1103}%
\special{fp}%
\special{pa 2832 1140}%
\special{pa 2832 1149}%
\special{fp}%
\special{pa 2832 1186}%
\special{pa 2832 1194}%
\special{fp}%
\special{pa 2832 1232}%
\special{pa 2832 1240}%
\special{fp}%
\special{pa 2832 1277}%
\special{pa 2832 1285}%
\special{fp}%
\special{pa 2832 1323}%
\special{pa 2832 1331}%
\special{fp}%
\special{pa 2832 1368}%
\special{pa 2832 1376}%
\special{fp}%
\special{pa 2832 1414}%
\special{pa 2832 1422}%
\special{fp}%
\special{pa 2832 1460}%
\special{pa 2832 1468}%
\special{fp}%
\special{pa 2832 1505}%
\special{pa 2832 1513}%
\special{fp}%
\special{pa 2832 1551}%
\special{pa 2832 1559}%
\special{fp}%
\special{pa 2832 1596}%
\special{pa 2832 1604}%
\special{fp}%
\special{pa 2832 1642}%
\special{pa 2832 1650}%
\special{fp}%
\special{pa 2832 1687}%
\special{pa 2832 1696}%
\special{fp}%
\special{pa 2832 1733}%
\special{pa 2832 1741}%
\special{fp}%
\special{pa 2832 1779}%
\special{pa 2832 1787}%
\special{fp}%
\special{pa 2832 1824}%
\special{pa 2832 1832}%
\special{fp}%
\special{pa 2832 1870}%
\special{pa 2832 1878}%
\special{fp}%
\special{pa 2832 1915}%
\special{pa 2832 1923}%
\special{fp}%
\special{pa 2832 1961}%
\special{pa 2832 1969}%
\special{fp}%
\special{pa 2832 2007}%
\special{pa 2832 2015}%
\special{fp}%
\special{pa 2832 2052}%
\special{pa 2832 2060}%
\special{fp}%
\special{pa 2832 2098}%
\special{pa 2832 2106}%
\special{fp}%
\special{pa 2832 2143}%
\special{pa 2832 2151}%
\special{fp}%
\special{pa 2832 2189}%
\special{pa 2832 2197}%
\special{fp}%
\special{pa 2832 2234}%
\special{pa 2832 2243}%
\special{fp}%
\special{pa 2832 2280}%
\special{pa 2832 2280}%
\special{fp}%
\special{pn 8}%
\special{pa 2832 2288}%
\special{pa 2832 2327}%
\special{fp}%
\special{pa 2832 2335}%
\special{pa 2832 2374}%
\special{fp}%
\special{pa 2832 2382}%
\special{pa 2832 2420}%
\special{fp}%
\special{pa 2832 2429}%
\special{pa 2832 2467}%
\special{fp}%
\special{pa 2832 2476}%
\special{pa 2832 2514}%
\special{fp}%
\special{pa 2832 2522}%
\special{pa 2832 2561}%
\special{fp}%
\special{pa 2832 2569}%
\special{pa 2832 2608}%
\special{fp}%
\special{pa 2832 2616}%
\special{pa 2832 2654}%
\special{fp}%
\special{pa 2832 2663}%
\special{pa 2832 2701}%
\special{fp}%
\special{pa 2832 2710}%
\special{pa 2832 2748}%
\special{fp}%
\special{pa 2832 2756}%
\special{pa 2832 2795}%
\special{fp}%
\special{pa 2832 2803}%
\special{pa 2832 2842}%
\special{fp}%
\special{pa 2832 2850}%
\special{pa 2832 2888}%
\special{fp}%
\special{pa 2832 2897}%
\special{pa 2832 2935}%
\special{fp}%
\special{pa 2832 2944}%
\special{pa 2832 2982}%
\special{fp}%
\special{pa 2832 2990}%
\special{pa 2832 3029}%
\special{fp}%
\special{pa 2832 3037}%
\special{pa 2832 3076}%
\special{fp}%
\special{pa 2832 3084}%
\special{pa 2832 3122}%
\special{fp}%
\special{pa 2832 3131}%
\special{pa 2832 3169}%
\special{fp}%
\special{pa 2832 3178}%
\special{pa 2832 3216}%
\special{fp}%
\special{pa 2832 3224}%
\special{pa 2832 3263}%
\special{fp}%
\special{pa 2832 3271}%
\special{pa 2832 3310}%
\special{fp}%
\special{pa 2832 3318}%
\special{pa 2832 3356}%
\special{fp}%
\special{pa 2832 3365}%
\special{pa 2832 3403}%
\special{fp}%
\special{pa 2832 3412}%
\special{pa 2832 3450}%
\special{fp}%
\special{pn 8}%
\special{pn 8}%
\special{pa 3590 320}%
\special{pa 3590 328}%
\special{fp}%
\special{pa 3590 366}%
\special{pa 3590 374}%
\special{fp}%
\special{pa 3590 411}%
\special{pa 3590 419}%
\special{fp}%
\special{pa 3590 457}%
\special{pa 3590 465}%
\special{fp}%
\special{pa 3590 502}%
\special{pa 3590 510}%
\special{fp}%
\special{pa 3590 548}%
\special{pa 3590 556}%
\special{fp}%
\special{pa 3590 593}%
\special{pa 3590 602}%
\special{fp}%
\special{pa 3590 639}%
\special{pa 3590 647}%
\special{fp}%
\special{pa 3590 685}%
\special{pa 3590 693}%
\special{fp}%
\special{pa 3590 730}%
\special{pa 3590 738}%
\special{fp}%
\special{pa 3590 776}%
\special{pa 3590 784}%
\special{fp}%
\special{pa 3590 821}%
\special{pa 3590 829}%
\special{fp}%
\special{pa 3590 867}%
\special{pa 3590 875}%
\special{fp}%
\special{pa 3590 913}%
\special{pa 3590 921}%
\special{fp}%
\special{pa 3590 958}%
\special{pa 3590 966}%
\special{fp}%
\special{pa 3590 1004}%
\special{pa 3590 1012}%
\special{fp}%
\special{pa 3590 1049}%
\special{pa 3590 1057}%
\special{fp}%
\special{pa 3590 1095}%
\special{pa 3590 1103}%
\special{fp}%
\special{pa 3590 1140}%
\special{pa 3590 1149}%
\special{fp}%
\special{pa 3590 1186}%
\special{pa 3590 1194}%
\special{fp}%
\special{pa 3590 1232}%
\special{pa 3590 1240}%
\special{fp}%
\special{pa 3590 1277}%
\special{pa 3590 1285}%
\special{fp}%
\special{pa 3590 1323}%
\special{pa 3590 1331}%
\special{fp}%
\special{pa 3590 1368}%
\special{pa 3590 1376}%
\special{fp}%
\special{pa 3590 1414}%
\special{pa 3590 1422}%
\special{fp}%
\special{pa 3590 1460}%
\special{pa 3590 1468}%
\special{fp}%
\special{pa 3590 1505}%
\special{pa 3590 1513}%
\special{fp}%
\special{pa 3590 1551}%
\special{pa 3590 1559}%
\special{fp}%
\special{pa 3590 1596}%
\special{pa 3590 1604}%
\special{fp}%
\special{pa 3590 1642}%
\special{pa 3590 1650}%
\special{fp}%
\special{pa 3590 1687}%
\special{pa 3590 1696}%
\special{fp}%
\special{pa 3590 1733}%
\special{pa 3590 1741}%
\special{fp}%
\special{pa 3590 1779}%
\special{pa 3590 1787}%
\special{fp}%
\special{pa 3590 1824}%
\special{pa 3590 1832}%
\special{fp}%
\special{pa 3590 1870}%
\special{pa 3590 1878}%
\special{fp}%
\special{pa 3590 1915}%
\special{pa 3590 1923}%
\special{fp}%
\special{pa 3590 1961}%
\special{pa 3590 1969}%
\special{fp}%
\special{pa 3590 2007}%
\special{pa 3590 2015}%
\special{fp}%
\special{pa 3590 2052}%
\special{pa 3590 2060}%
\special{fp}%
\special{pa 3590 2098}%
\special{pa 3590 2106}%
\special{fp}%
\special{pa 3590 2143}%
\special{pa 3590 2151}%
\special{fp}%
\special{pa 3590 2189}%
\special{pa 3590 2197}%
\special{fp}%
\special{pa 3590 2234}%
\special{pa 3590 2243}%
\special{fp}%
\special{pa 3590 2280}%
\special{pa 3590 2280}%
\special{fp}%
\special{pn 8}%
\special{pa 3590 2288}%
\special{pa 3590 2327}%
\special{fp}%
\special{pa 3590 2335}%
\special{pa 3590 2374}%
\special{fp}%
\special{pa 3590 2382}%
\special{pa 3590 2420}%
\special{fp}%
\special{pa 3590 2429}%
\special{pa 3590 2467}%
\special{fp}%
\special{pa 3590 2476}%
\special{pa 3590 2514}%
\special{fp}%
\special{pa 3590 2522}%
\special{pa 3590 2561}%
\special{fp}%
\special{pa 3590 2569}%
\special{pa 3590 2608}%
\special{fp}%
\special{pa 3590 2616}%
\special{pa 3590 2654}%
\special{fp}%
\special{pa 3590 2663}%
\special{pa 3590 2701}%
\special{fp}%
\special{pa 3590 2710}%
\special{pa 3590 2748}%
\special{fp}%
\special{pa 3590 2756}%
\special{pa 3590 2795}%
\special{fp}%
\special{pa 3590 2803}%
\special{pa 3590 2842}%
\special{fp}%
\special{pa 3590 2850}%
\special{pa 3590 2888}%
\special{fp}%
\special{pa 3590 2897}%
\special{pa 3590 2935}%
\special{fp}%
\special{pa 3590 2944}%
\special{pa 3590 2982}%
\special{fp}%
\special{pa 3590 2990}%
\special{pa 3590 3029}%
\special{fp}%
\special{pa 3590 3037}%
\special{pa 3590 3076}%
\special{fp}%
\special{pa 3590 3084}%
\special{pa 3590 3122}%
\special{fp}%
\special{pa 3590 3131}%
\special{pa 3590 3169}%
\special{fp}%
\special{pa 3590 3178}%
\special{pa 3590 3216}%
\special{fp}%
\special{pa 3590 3224}%
\special{pa 3590 3263}%
\special{fp}%
\special{pa 3590 3271}%
\special{pa 3590 3310}%
\special{fp}%
\special{pa 3590 3318}%
\special{pa 3590 3356}%
\special{fp}%
\special{pa 3590 3365}%
\special{pa 3590 3403}%
\special{fp}%
\special{pa 3590 3412}%
\special{pa 3590 3450}%
\special{fp}%

%
\special{pn 8}%
\special{ar 2190 1925 737 737 5.7587431 0.5085612}%
%
\special{pn 8}%
\special{ar 4855 1915 2240 2240 2.7095063 3.5612159}%
\put(29.5500,-19.8000){\makebox(0,0)[lb]{$a$}}%
\put(25.6500,-13.6000){\makebox(0,0)[lb]{$b$}}%
\put(36.3500,-13.3000){\makebox(0,0)[lb]{$w$}}%
\put(28.9000,-24.1000){\makebox(0,0)[lb]{$1$}}%
\put(36.2000,-24.1000){\makebox(0,0)[lb]{$r^2$}}%
\end{picture}}%

\end{center}
\vspace{3mm}
\end{rem}

\begin{proof}
Set
$$
A:=\left\{ w\in {\mathcal W} : \underline{w}(r^{-1}, r) >ar^{-\alpha },~ \overline{w}(1, r) - \underline{w}(0,1) <br^{-\alpha} \right\}.
$$
Then we see
\begin{align*}
{\cal Q}(A)
&= {\cal Q}\left(\underline{w}(1, r^2) >a,~ \overline{w}(r, r^2) - \underline{w}(0,r) <b\right) \\
&\geq {\cal Q}\left(\underline{w}(1, r^2) >a,~ \overline{w}(1, r^2) - \underline{w}(0,1) <b\right) > 0
\end{align*}
in view of (\ref{eq:selfsimilar}). 
In view of Lemma \ref{prop:mixing}, there exists ${\mathcal N} \in {\mathscr B}({\mathcal W})$ such that ${\mathcal Q}({\mathcal N})=0$, and for $w\in {\mathcal W}\setminus {\mathcal N}$, $w(0)=0$ and $\{n\in {\mathbb N} : T^n w \in A\}$ is an infinite set.
For $w\in {\mathcal W}\setminus {\mathcal N}$, let $\{ n_{\ell}^w : \ell \in {\mathbb N}\}$ be a strictly increasing sequence in $\{ n\in {\mathbb N} : T^n w \in A\}$.
Then we have
\begin{equation}\label{eq:propRE01}\left\{ \begin{array}{l}
\underline{w}\left(r^{n_{\ell}^w-1}, r^{n_{\ell}^w}\right)\ge \underline{w}\left(r^{n_{\ell}^w-1}, r^{n_{\ell}^w+1}\right) >ar^{\alpha (n_\ell^w -1)}\\
\overline{w}\left(r^{n_\ell^w}, r^{n_{\ell}^w+1}\right) -\underline{w}\left(0, r^{n_{\ell}^w}\right) <br^{\alpha (n_\ell^w -1)} \\
\underline{w}(0,r^{n_{\ell}}) = \underline{w}(0, r^{n_{\ell}+1}).
\end{array}
\right.\end{equation}
For the last equality above, we used the fact that $w(0)=0$ implies
\begin{align*}
\underline{w}(0, r^{n_{\ell}}) &=  \min \left\{\underline{w}(0, r^{n_{\ell}}) ,ar^{\alpha (n_{\ell} -1)} \right\} 
\leq \min \left\{\underline{w}(0, r^{n_{\ell}}), \underline{w}(r^{n_{\ell}-1}, r^{n_{\ell}+1}) \right\} 
= \underline{w}(0, r^{n_{\ell}+1}).
\end{align*}
On the other hand, by applying the first inequality in (\ref{eq:propRE01}) to Lemma \ref{lem:w3}, it holds that  
\begin{align*}
&\int _{{\mathbb R}^d} \left|u_{n_{\ell (k)}^w}(x)\right|^2 e^{-w(x)}{\rm d}x \leq C_2\,k r^{(d-2)(n_{\ell (k)}^w-1)}\exp \left( -ar^{\alpha (n_{\ell (k)}^w-1)}\right) \\
&\quad \hspace{5cm} \times \left( 1+ \frac{(r^d-1) r^{dn_{\ell (k)}^w}\exp \left( - ar^{\alpha (n_{\ell (k)}^w-1)} \right)}{\exp \left(-\overline{w}(0,1)\right)} \right)
\end{align*}
for $k \in {\mathbb N}$. From this, one can get for sufficiently large $k \in \mathbb{N}$ that
\begin{equation}\label{eq:propRE02}
\int _{{\mathbb R}^d} \left|u_{n_{\ell (k)}^w}(x)\right|^2 e^{-w(x)}{\rm d}x  \leq Ck r^{(d-2)n_{\ell (k)}^w}\exp \left( -ar^{\alpha (n_{\ell (k)}^w-1)}+\overline{w}(0,1)\right),
\end{equation}
where $C$ is a constant depending on $d$, $r$, $\alpha$, $\varphi$ and $a$. Moreover, since
$$
{\mathcal E}^w\left(u_{n_{\ell (k)}^w}, u_{n_{\ell (k)}^w}\right) \leq \frac{1}{k}\int_{\mathbb{R}^d}\left|u_{n_{\ell (k)}^w}(x)\right|^2e^{-w(x)}{\rm d}x, \quad k\in {\mathbb N},
$$
we have by (\ref{lem:w1-2}) and (\ref{lem:w2}) that for $k\in {\mathbb N}$
\begin{align}\label{eq:r2}
C_2r^{-2n_{\ell (k)}^w} \exp \left(-\overline{w}\left(r^{n^w_{\ell (k)}}, r^{n^w_{n_{\ell (x)}}\!\!+1}\right)\right) \leq \frac{2}{k} C_1 \exp \left(-\underline{w}\left(0, r^{n^w_{n_{\ell (x)}}\!\!+1}\right) \right).
\end{align}
Then, by virtue of the second and third relations in \eqref{eq:propRE01}, and \eqref{eq:r2}
\begin{align*}
&r^{-2n_{\ell (k)}^w} \exp \left(- br^{\alpha (n_{\ell (k)}^w-1)} \right) \le r^{-2n_{\ell (k)}^w} \exp \left(\underline{w}\left(0, r^{n^w_{n_{\ell (x)}}}\right) - \overline{w}\left(r^{n^w_{\ell (k)}}, r^{n^w_{n_{\ell (x)}}\!\!+1}\right) \right) \\
&= r^{-2n_{\ell (k)}^w} \exp \left(\underline{w}\left(0, r^{n^w_{n_{\ell (x)}}\!\!+1}\right) - \overline{w}\left(r^{n^w_{\ell (k)}}, r^{n^w_{n_{\ell (x)}}\!\!+1}\right)\right) \le \frac{2C_1}{kC_2},
\end{align*}
hence, for $\gamma \in [0,1]$ and $k\in {\mathbb N}$
$$
r^{-2(1-\gamma)n_{\ell (k)}^w} \exp \left(- b(1-\gamma)r^{\alpha (n_{\ell (k)}^w-1)} \right) \le \left(\frac{2C_1}{kC_2}\right)^{1-\gamma}.
$$
From this inequality, we see that the right-hand side of (\ref{eq:propRE02}) is dominated by
\begin{align*}
C\left(\frac{2C_1}{C_2}\right)^{1-\gamma} k^{\gamma}r^{(d-2\gamma)n_{\ell (k)}^w}\exp \left((b(1-\gamma) -a)r^{\alpha (n_{\ell (k)}^w-1)}+\overline{w}(0,1)\right).
\end{align*}
The proof is completed.
\end{proof}

Now we prove Theorem \ref{thm:RE1} by applying Propositions \ref{thm:criterion} and \ref{prop:RE}.

\begin{proof}[Proof of Theorem \ref{thm:RE1}]
We note that
\begin{align*}
&{\mathcal Q} \left(\overline{w}(1,r^2) -\underline{w}(0, 1) <2\underline{w}(1,r^2)\right)\\
&\qquad = {\mathcal Q} \left(\bigcup _{\widetilde{a}\in (0,\infty ) \cap {\mathbb Q}} \left\{\underline{w}(1,r^2) >\widetilde{a},\ \overline{w}(1,r^2) -\underline{w}(0, 1) <2\widetilde{a} \right\} \right) \\
&\qquad \leq \sum _{\widetilde{a} \in (0,\infty ) \cap {\mathbb Q}} {\mathcal Q} \left(\underline{w}(1,r^2) >\widetilde{a},\ \overline{w}(1,r^2) -\underline{w}(0, 1) <2\widetilde{a}\right).
\end{align*}
In view of this fact, the assumption \eqref{assump;RE1} implies that there exists $a >0$ such that
\begin{equation}\label{2a-varepsilon}
{\mathcal Q} \left(\underline{w}(1,r^2) >a,\ \overline{w}(1,r^2) -\underline{w}(0, 1) <2a- 4\varepsilon \right) >0
\end{equation}
for a sufficiently small $\varepsilon > 0$.
Thus, by applying Proposition \ref{prop:RE} with $\gamma = 1/2$ and $b=2a-\varepsilon$, we see that 
\begin{equation}\label{L2diff}
k^{-1/2} \int _{{\mathbb R}^d} \left|u_{n_{\ell (k)}^w}(x)\right|^2 e^{-w(x)}{\rm d}x \longrightarrow 0 \quad {\rm as}~~k \to \infty.
\end{equation}
Then, by Corollary \ref{ex:Bm} and (\ref{L2diff}), 
\begin{align*}
\frac{2}{k}\left(\int _{{\mathbb R}^d} \left|u_{n_{\ell (k)}^w}(x)\right|^2 e^{-w(x)}{\rm d}x\right)\left(\int _{{\mathbb R}^1} \left|u_{n(k)}(x)\right|^2{\rm d}x\right) \le \frac{2\widetilde{C}}{k^{1/2}}\int _{{\mathbb R}^d} \left|u_{n_{\ell (k)}^w}(x)\right|^2 e^{-w(x)}{\rm d}x \longrightarrow 0 
\end{align*}
as $k \to \infty$. Moreover, (\ref{2a-varepsilon}) implies ${\mathcal Q}(\underline{w}(1,r)>a)>0$ and therefore, $X_w(t)$ is recurrent for almost all environments in view of \cite[Theorem 2.2]{KTT1}. Hence,  by virtue of Proposition \ref{thm:criterion}, we can conclude that $(X_{w}(t), B(t))$ is recurrent for almost all environments.
\end{proof}

\section{Applications to explicit random environments}\label{sec:App}
In this section, as applications of a random environment appeared in Section \ref{sec:RE}, we consider the recurrence of the product of diffusion processes in semi-selfsimilar Gaussian and L\'evy random environments, and show Theorems \ref{thm:Gauss} and \ref{thm:Levy}, and Corollary \ref{cor:BroxBrown}.

\subsection{Gaussian random environments}\label{subsec:Gauss}

Let ${\mathcal W}$ be the space of locally bounded Borel measurable functions $w$ on ${\mathbb R}^d$, with the topology generated by the uniform convergence on compact sets. 
We define a probability measure ${\mathcal Q}$ on $({\mathcal W}, {\mathscr B}({\mathcal W}))$ by a Gaussian measure, that is, $(w(x_1), w(x_2),\dots w(x_n))$ has an $n$-dimensional Gaussian distribution under ${\mathcal Q}$, where $x_1,x_2,\dots ,x_n \in {\mathbb R}^d$ for $n\in {\mathbb N}$. 
We assume that ${\mathcal Q}(w(0)=0)=1$ and $E^{\cal Q}[w(x)]=0$ for $x\in {\mathbb R}^d$. Here $E^{\cal Q}$ stands for the expectation with respect to ${\cal Q}$. 
Let $K$ be the covariance kernel of ${\mathcal Q}$, that is, $K(x,y) = E^{\cal Q}[w(x)w(y)]$ for $x,y\in {\mathbb R}^d$. Since ${\mathcal Q}$ is a probability measure on $({\mathcal W}, {\mathscr B}({\mathcal W}))$, $K$ is a measurable function on ${\mathbb R}^d\times {\mathbb R}^d$. 

It is well-known that the law of a Gaussian measure is determined by the mean and the covariance kernel. First, we are going to consider a sufficient condition for \eqref{assump;RE1} in Theorem \ref{thm:RE1}.

\begin{lem}\label{lem:Gauss1}
Assume that for $r >1$
\[
\sup _{1 \le |x| \le r^2} \int _{|y|\leq r^2} K(x,y) {\rm d}y -\inf _{|x|\le 1} \int _{|y|\leq r^2} K(x,y) {\rm d}y <2\inf _{1\le |x|\le r^2} \int _{|y|\leq r^2} K(x,y) {\rm d}y.
\]
Then the assumption \eqref{assump;RE1} holds.
\end{lem}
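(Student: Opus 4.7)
The plan is to exploit the Gaussian structure by decomposing $w$ with respect to the averaged random variable
\[
Z := \int_{|y|\le r^2} w(y)\,{\rm d}y,
\]
which is centered Gaussian with variance $\sigma^2 := \int_{|x|\le r^2} f(x)\,{\rm d}x$, where $f(x) := \int_{|y|\le r^2} K(x,y)\,{\rm d}y = E^{\mathcal Q}[w(x) Z]$. The hypothesis forces $\sigma^2 > 0$: if $\sigma^2 = 0$ then $Z = 0$ ${\mathcal Q}$-a.s., hence $f \equiv 0$, and the assumed inequality would reduce to $0 < 0$. Setting $g(x) := f(x)/\sigma^2$, I would use the standard orthogonal Gaussian decomposition
\[
w(x) = g(x)\, Z + v(x),
\]
where the Gaussian field $v$ is independent of $Z$.

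Dividing the hypothesis by $\sigma^2$ rewrites it as
\[
\sup_{1\le|x|\le r^2} g(x) - \inf_{|x|\le 1} g(x) - 2\inf_{1\le|x|\le r^2} g(x) = -\delta
\]
for some $\delta > 0$. Writing $M := \sup_{|x|\le r^2}|v(x)|$, elementary monotonicity of $\sup$ and $\inf$ yields, on the event $\{Z = t\}$ with $t > 0$,
\[
\overline w(1, r^2) - \underline w(0,1) - 2\underline w(1, r^2) \le -t\,\delta + 4M.
\]
Consequently, the event in \eqref{assump;RE1} contains $\{Z > t_0\} \cap \{M < t_0 \delta/4\}$ for every $t_0 > 0$.

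By independence of $Z$ and $v$,
\[
{\mathcal Q}\bigl(\overline w(1,r^2) - \underline w(0,1) < 2\underline w(1,r^2)\bigr) \ge {\mathcal Q}(Z > t_0)\cdot{\mathcal Q}(M < t_0\delta/4),
\]
and I would conclude by choosing $t_0$ large. The first factor is strictly positive for every $t_0$, since $Z$ is a non-degenerate centered Gaussian; the second tends to $1$ as $t_0 \to \infty$ provided $M < \infty$ ${\mathcal Q}$-a.s., which follows from the ${\mathcal Q}$-a.s.\ local boundedness of $w$, the boundedness of $g$ on $\{|x|\le r^2\}$ (implicit in the hypothesis through the finiteness of the sup and inf), and the a.s.\ finiteness of $Z$. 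The main obstacle is verifying these two technical facts, $\sigma^2 > 0$ and $M < \infty$ a.s.; once they are in hand, the rest is a short deterministic sup/inf estimate combined with one application of independence.
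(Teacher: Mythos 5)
Your argument is correct in substance, but it takes a genuinely different route from the paper. The paper's proof (following Lemma 3.1 of [KTT1]) observes that $h(\cdot):=\int_{|y|\le r^2}K(\cdot,y)\,{\rm d}y$ lies in the Cameron--Martin space $\mathcal H$ of $\mathcal Q$, invokes the support theorem to get $\mathcal Q\left(\sup_x|w(x)-h(x)|<\varepsilon\right)>0$ for every $\varepsilon>0$, and then runs exactly the same deterministic $\sup/\inf$ arithmetic with $\varepsilon<\delta/4$. You instead condition on the single Gaussian functional $Z=\int_{|y|\le r^2}w(y)\,{\rm d}y$, write $w=gZ+v$ with $v\perp Z$, and send $Z$ to a large positive value; since $g=h/\sigma^2$, you are pushing $w$ in the same Cameron--Martin direction $h$, but you replace the support theorem by the elementary independence factorization $\mathcal Q(Z>t_0)\,\mathcal Q(M<t_0\delta/4)$. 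What your route buys is that it avoids quoting the density of $\mathcal H$ in the support of $\mathcal Q$; what it costs is the two technical facts you flag, of which $\sigma^2>0$ is handled cleanly by your contradiction argument, while $M<\infty$ a.s.\ deserves one more line: the hypothesis only forces finiteness of $\sup h$ on the annulus $1\le|x|\le r^2$ and of $\inf h$ on $|x|\le 1$, not of $\sup_{|x|\le 1}h$, so the boundedness of $g$ on the full ball is not ``implicit in the hypothesis'' as you claim. It does follow, e.g., from Fernique's theorem: a.s.\ local boundedness of the Gaussian field gives $\sup_{|x|\le r^2}K(x,x)<\infty$, hence $|h(x)|\le\sigma\sqrt{K(x,x)}$ is bounded there. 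With that supplement (a regularity point the paper itself glosses over in asserting $h\in\mathcal H$), your proof is complete and yields the same conclusion.
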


\begin{proof}
The proof is similar to that of \cite[Lemma 3.1]{KTT1}.
So, we omit the detail and see only the sketch of the proof.
By the general theory of the Gaussian system, for $f\in L^2({\mathbb R}^d, {\rm d}x)$ with compact support, $\int _{{\mathbb R}^d}K(\cdot ,y) f(y) {\rm d}y$ 
is in the Cameron-Martin space ${\mathcal H}$ associated to ${\mathcal Q}$ on $({\mathcal W}, {\mathscr B}({\mathcal W}))$.
In particular,
\begin{equation}\label{eq:CM1}
\int _{|y|\leq r^2} K(\cdot ,y)\, {\rm d}y \in {\mathcal H} .
\end{equation}
On the other hand, since ${\mathcal H}$ is dense in the support of ${\mathcal Q}$, for any $g\in {\mathcal H}$
\[
{\mathcal Q}\left(\sup _{x\in {\mathbb R}^d}|w(x)-g(x)| < \varepsilon \right) >0 \quad {\rm for~any}~~\varepsilon >0.
\]
This inequality and (\ref{eq:CM1}) imply
\begin{equation}\label{eq:CM2}
{\mathcal Q}\left(\sup _{x\in {\mathbb R}^d}\left| w(x)-\int _{|y|\leq r^2} K(x,y)\,{\rm d}y\right| < \varepsilon \right) >0.
\end{equation}
Let
\begin{align*}
\delta := 2\inf _{1\le |x|\le r^2} \int _{|y|\leq r^2} K(x,y) {\rm d}y - \left( \sup _{1\le |x| \le r^2} \int _{|y|\leq r^2} K(x,y) {\rm d}y -\inf _{|x|\le 1} \int _{|y|\leq r^2} K(x,y) {\rm d}y \right)
\end{align*}
and choose $\varepsilon \in (0, \delta /4)$.
Then, if
\[
\sup _{x\in {\mathbb R}^d}\left| w(x)-\int _{|y|\leq r^2} K(x,y) {\rm d}y\right| < \varepsilon ,
\]
we have
\[
\sup _{1\le |x| \le r^2} w(x) -\inf _{|x|\le 1}w(x) < 2\inf _{1\le |x|\le r^2} w(x) + 4\varepsilon -\delta < 2\inf _{1\le |x|\le r^2} w(x) .
\]
Therefore, by (\ref{eq:CM2}) we have the assertion. 
\end{proof}

Next we consider a sufficient condition for the mixing condition.
In the sequel, let $r>1$, $\alpha >0$ and  $T$ be a mapping from ${\mathcal W}$ to ${\mathcal W}$ defined by $Tw(x) = r^{-\alpha} w(rx)$ for $x\in {\mathbb R}^d$ satisfying (\ref{eq:selfsimilar}).
We say that $T$ is strongly mixing if
$$
\lim _{n\rightarrow \infty} {\mathcal Q}(T^n A_1\cap A_2)= {\mathcal Q}(A_1) {\mathcal Q}(A_2), \quad A_1,A_2\in {\mathscr B}({\mathcal W}).
$$
It is well known that every strongly mixing transformation is weakly mixing, hence is ergodic (see \cite{Walters}). Set $D_1:=\{ x\in {\mathbb R}^d; 1<|x|<r\}$. Then, we see the following.

\begin{lem}[Lemma 3.3 in \cite{KTT1}]\label{lem:Gauss2}
If
\[
\lim _{n\rightarrow \infty} r^{-\alpha n} \sup _{x,y\in D_1} K(r^n x,y) =0,
\]
then $T$ is strongly mixing.
\end{lem}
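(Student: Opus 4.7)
The plan hinges on the classical principle in Gaussian ergodic theory that for a measure-preserving transformation $T$ of a centered Gaussian measure ${\mathcal Q}$, strong mixing is equivalent to the decay of first-chaos covariances $\mathrm{Cov}(w(x), T^n w(y)) \to 0$ for every $x,y \in {\mathbb R}^d$. The cleanest way to see this is to test against the Wick exponentials $f_\xi := \exp(w(\xi) - \tfrac12 K(\xi,\xi))$, which are $L^2({\mathcal Q})$-total; a direct Gaussian computation gives
\[
E^{\mathcal Q}\bigl[f_\xi \cdot (f_\eta \circ T^n)\bigr] = \exp\!\bigl(r^{-\alpha n} K(\xi, r^n \eta)\bigr),
\]
which tends to $1 = E^{\mathcal Q}[f_\xi]\, E^{\mathcal Q}[f_\eta]$ precisely when the covariance decays. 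Thus it suffices to prove $r^{-\alpha n}K(x, r^n y) \to 0$ for every fixed $x,y \in {\mathbb R}^d$.

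Next I would extract the scaling law of $K$: because $Tw$ has the same law as $w$ under ${\mathcal Q}$, equating covariances yields $K(x,y) = r^{-2\alpha} K(rx,ry)$, and by iteration $K(r^k \xi, r^k \eta) = r^{2k\alpha} K(\xi,\eta)$ for all $k \in {\mathbb Z}$.

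For the main estimate, the case $x=0$ or $y=0$ is trivial since the assumption $w(0)=0$ a.s.\ forces $K(0,\cdot)\equiv 0$. For $x,y \neq 0$, choose integers $m_1, m_2$ and $x', y' \in D_1$ with $x = r^{m_1}x'$ and $y = r^{m_2}y'$, and set $N := n + m_2 - m_1$. Applying the scaling law and the symmetry of $K$,
\[
r^{-\alpha n} K(x, r^n y) = r^{-\alpha n + 2 m_1 \alpha} K(x', r^N y') = r^{\alpha(m_1+m_2)} \cdot r^{-\alpha N} K(r^N y', x').
\]
For $n$ sufficiently large we have $N \ge 1$, and the standing hypothesis $r^{-\alpha N}\sup_{u,v\in D_1} K(r^N u, v) \to 0$ applied with $u = y'$, $v = x'$ forces the right-hand side to vanish; the prefactor $r^{\alpha(m_1+m_2)}$ depends only on the fixed pair $x,y$.

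The main subtlety is the opening reduction: identifying strong mixing of $T$ on $({\mathcal W}, {\mathscr B}({\mathcal W}), {\mathcal Q})$ with pointwise first-chaos covariance decay. This relies on the totality of Wick exponentials in $L^2({\mathcal Q})$ and is routine in the Gaussian-measure framework (Maruyama's theorem), but is the only non-algebraic ingredient; once it is granted, the rest of the argument is an elementary manipulation powered by the semi-selfsimilarity ${\mathcal Q} \circ T^{-1} = {\mathcal Q}$.
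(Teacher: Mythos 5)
The paper itself does not prove this lemma; it is imported verbatim as Lemma~3.3 of \cite{KTT1}, so there is no in-paper argument to compare against. Your strategy --- reduce strong mixing of the Gaussian dynamical system to decay of the first-chaos covariances $r^{-\alpha n}K(x,r^n y)$ by testing against Wick exponentials, then use the scaling identity $K(rx,ry)=r^{2\alpha}K(x,y)$ (a correct consequence of \eqref{eq:selfsimilar}) to pull an arbitrary pair into the fundamental annulus $D_1$ --- is the standard Maruyama-type argument and is the right one; both the covariance computation $E^{\mathcal Q}[f_\xi\,(f_\eta\circ T^n)]=\exp\bigl(r^{-\alpha n}K(\xi,r^n\eta)\bigr)$ and the bookkeeping $r^{-\alpha n}K(x,r^ny)=r^{\alpha(m_1+m_2)}\,r^{-\alpha N}K(r^Ny',x')$ check out.

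Three points need patching before this is a proof. First, the single-point Wick exponentials $\{f_\xi:\xi\in{\mathbb R}^d\}$ are \emph{not} total in $L^2({\mathcal Q})$ (already for two independent Gaussian coordinates $g_1,g_2$, the two functions $e^{g_1},e^{g_2}$ span almost nothing); totality holds for $f_h=\exp\bigl(h-\tfrac 12 E^{\mathcal Q}[h^2]\bigr)$ with $h$ ranging over finite linear combinations $\sum_i c_i w(x_i)$. The repair is harmless --- the same computation yields $\exp\bigl(\sum_{i,j}c_i c_j'\, r^{-\alpha n}K(x_i,r^n y_j)\bigr)$, and the uniform bound $\lvert E^{\mathcal Q}[f(g\circ T^n)]\rvert\le\|f\|_2\|g\|_2$ lets you pass from the total family to all of $L^2$ --- but as written the reduction is not justified. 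Second, $D_1$ is the \emph{open} annulus $\{1<|x|<r\}$, so a point $x\neq 0$ with $\log_r|x|\in{\mathbb Z}$ admits no representation $x=r^{m}x'$ with $x'\in D_1$; you must either extend the hypothesis to the closed annulus (automatic if $K$ is continuous, as in the Brownian application, but $K$ is only assumed measurable here) or verify that the exponentials built from the remaining points are still total, which again needs some continuity of $K$. Third, the hypothesis controls only $\sup K$, not $\sup|K|$: your closing step ``the sup tends to $0$, hence each term vanishes'' gives only $\limsup\le 0$, whereas convergence of the Wick exponentials requires two-sided decay of $r^{-\alpha N}K(r^N y',x')$. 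This last issue is arguably a defect of the statement itself (it is vacuous when $K\ge 0$, which covers every application in the paper), but your argument should make explicit where nonnegativity of $K$, or an absolute value in the hypothesis, is being used.
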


Now we prove Theorem \ref{thm:Gauss} and Corollary \ref{cor:BroxBrown}.

\begin{proof}[Proof of Theorem \ref{thm:Gauss}]
For $w \in {\mathcal W}$, let ${\bf X}^w=(X_w(t), P_x^w)$ be the $d$-dimensional diffusion process associated to $({\mathcal E}^w, {\mathcal F}^w)$ defined by (\ref{eq:DFw}). Note that the condition \eqref{assump;RE1} implies ${\mathcal Q}(\underline{w}(1,r) >a_0) >0$ for some $a_0>0$. Thus, if $T$ on ${\mathcal W}$ is weakly mixing, ${\bf X}^w$ is recurrent for almost all environment (\cite[Theorem 2.2]{KTT1}). 
From this fact with Theorem \ref{thm:RE1}, Lemmas \ref{lem:Gauss1} and \ref{lem:Gauss2}, we have Theorem \ref{thm:Gauss}.
\end{proof}

%

\begin{proof}[Proof of Corollary \ref{cor:BroxBrown}]
Let $w$ be the two-sided Brownian motion on ${\mathbb R}$ under ${\mathcal Q}$. In this case, $w$ and $r^{-1/2} w(r\,\cdot )$ have the same law.
Furthermore, the covariance kernel $K(x,y)$ is given by
\[
K(x,y) = (|x|\wedge |y|) {\bf 1}_{(0,\infty)} (xy), \quad x,y\in {\mathbb R}.
\]
Choose $r$ satisfying $1 < r < \sqrt{2+\sqrt{2}}$. Then, since
\begin{align*}
&\sup _{1 \le |x| \le r^2} \int _{|y|\leq r^2} K(x,y) {\rm d}y = \frac 12 r^4,\quad \inf _{|x|\le 1} \int _{|y|\leq r^2} K(x,y) {\rm d}y =0, \\ 
&\inf _{1\le |x|\le r^2} \int _{|y|\leq r^2} K(x,y) {\rm d}y =r^2-\frac{1}{2},
\end{align*}
(\ref{eq:thmGauss1}) is satisfied. Moreover, it is easy to see that (\ref{eq:thmGauss2}) is satisfied. On the other hand, we note that the Brox's diffusion process $Y_w(t)$ is a time changed process of the one-dimensional diffusion process $X_w(t)$ by the positive continuous additive functional $\int_0^{\cdot}e^{-w(X_w(s))}{\rm d}s$. Then the Dirichlet form $(\check{\mathcal E}^w, \check{\mathcal F}^w)$ corresponding to $Y_w(t)$ is given by (\ref{eq:DFw}) replacing the underlying measure $e^{-w(x)}{\rm d}x$ with $e^{-2w(x)}{\rm d}x$. It is well-known that $X_w(t)$ is recurrent (see \cite{Tanaka93}).  
Hence, by Corollary \ref{maincor} and Theorem \ref{thm:Gauss}, we obtain the assertion.
\end{proof}

\subsection{Products of environments generated by L\'evy processes}\label{subsec:Levy}

Let ${\mathcal W}$ be the space of functions $w$ on ${\mathbb R}$ satisfying the following: $w(0)=0$, $w$ is right (resp. left) continuous with left (resp. right) limits on $[0,\infty)$ (resp. $(-\infty, 0)$).
For $i=1,2,\dots , d$ we set a probability measure ${\mathcal Q}_i$ on $({\mathcal W}, {\mathscr B}({\mathcal W}))$ such that $(w(x), x \in [0,\infty), {\mathcal Q}_i)$ and $(w(-x), x \in  [0,\infty ), {\mathcal Q}_i)$ are independent semi-selfsimilar L\'evy processes with an exponent $\alpha _i \in (0,2]$ (see Definitions 13.4 and 13.12 in \cite{Sa}). 
Define the probability measure ${\mathcal Q}$ on $({\mathcal W}^d, {\mathscr B}({\mathcal W})^{\otimes d})$ by
\[
{\mathcal Q} := \bigotimes _{i=1}^d {\mathcal Q_i} .
\]
Denote the $i$th component of $w \in {\mathcal W}^d$ by $w^i$ and denote $w_+^i (t) := w^i(t)$ and $w_-^i (t) := w^i(-t)$ for $t\in [0,\infty )$.

By a similar argument to the proof of Proposition 2.1 in \cite{KTT2}, we have the following.

\begin{lem}\label{prop:Levy}
Let $i=1,2,\dots ,d$.
If $\alpha _i =2$ or both $w_+^i$ and $w_-^i$ have positive jumps with positive probabilities, then there exists a positive constant $M$ such that for any $a >0$
$$
{\mathcal Q}\left(\left\{\underline{w}^i(0,1) > - M \right\} \cap \left\{\left(w^i(-1) \wedge w^i(1)\right)> a\right\} \right) >0 .
$$
\end{lem}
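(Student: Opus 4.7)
The proof splits according to the dichotomy in the hypothesis. Exploiting the independence of $w_+^i$ and $w_-^i$ under $\mathcal{Q}_i$, and noting that $w^i(1)=w_+^i(1)$, $w^i(-1)=w_-^i(1)$, and $\underline{w}^i(0,1)=\min\bigl(\inf_{[0,1]}w_+^i,\inf_{[0,1]}w_-^i\bigr)$, the event of interest factors. Hence it suffices to produce, for each of the two generic one-sided processes $w \in \{w_+^i, w_-^i\}$, a constant $M_\pm>0$ such that
\[
\mathcal{Q}_i\Bigl(\inf_{t\in[0,1]}w(t) > -M_\pm,\ w(1) > a\Bigr) > 0 \qquad \text{for every } a > 0,
\]
and then to take $M=\max(M_+,M_-)$.

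If $\alpha_i=2$, Sato's classification of semi-stable distributions \cite{Sa} forces the one-sided L\'evy process $w$ to be a non-degenerate Brownian motion with drift, $w(t)=\sigma B(t)+\mu t$ with $\sigma>0$. The reflection principle then expresses $\mathcal{Q}_i(w(1)>a,\ \inf_{[0,1]}w>-M)$ as an explicit difference of two Gaussian tail probabilities, which is strictly positive for every $a>0$ and every $M>0$; equivalently, the topological support of $\mathcal{Q}_i$ on $C_0([0,1])$ is the whole space, so the open set $\{f:f(1)>a,\ \inf f>-M\}$ has positive measure. Any fixed $M>0$ works in this case.

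If $w$ has positive jumps with positive probability, there exists $\epsilon>0$ with $\lambda:=\nu_w([\epsilon,\infty))\in(0,\infty)$, where $\nu_w$ denotes the L\'evy measure of $w$. I will invoke the L\'evy--It\^o decomposition to write $w=S+V$, where $V$ is the compound Poisson process whose jumps are precisely the positive jumps of $w$ of size $\geq\epsilon$ (in particular $V$ is non-decreasing with $V(0)=0$) and $S$ is an independent L\'evy process gathering the Gaussian part, the drift, and the remaining jumps. Since $S$ is c\`adl\`ag, $\sup_{t\in[0,1]}|S(t)|$ is a.s.\ finite, so one may fix $M_0>0$ with $\mathcal{Q}_i(\sup_{[0,1]}|S|<M_0)>0$. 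For each $a>0$, the number of jumps of $V$ in $[0,1]$ is Poisson$(\lambda)$ and so exceeds $\lceil(a+M_0)/\epsilon\rceil$ with positive probability; this yields $\mathcal{Q}_i(V(1)>a+M_0)>0$. By independence of $S$ and $V$, the intersection $\{\sup|S|<M_0\}\cap\{V(1)>a+M_0\}$ has positive probability, and on this event the non-negativity of $V$ gives
\[
\inf_{[0,1]} w \;\geq\; \inf_{[0,1]} S \;\geq\; -M_0, \qquad w(1) = S(1)+V(1) > -M_0 + (a+M_0) = a.
\]
Thus $M_\pm:=M_0$ works, uniformly in $a$.

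The main obstacle is that $M$ must be chosen \emph{uniformly in} $a$ while $w(1)>a$ is demanded for arbitrarily large $a$. The L\'evy--It\^o splitting $w=S+V$ with $V\geq0$ is exactly what decouples these two requirements: downward deviations of $w$ are controlled solely by the $a$-independent process $S$, while the large upward movement needed to beat $a$ is supplied entirely by the non-negative $V$. The Gaussian case $\alpha_i=2$ is routine once one invokes the classification of $\alpha$-semi-stable distributions at the boundary exponent, which I would cite from \cite{Sa} rather than reprove.
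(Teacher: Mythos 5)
Your proof is correct and follows essentially the same route as the paper: both isolate the positive jumps of size at least $\epsilon$ into a non-negative compound Poisson part that supplies the arbitrarily large value of $w(1)$, control the remaining (c\`adl\`ag, hence a.s.\ bounded on $[0,1]$) part by an $a$-independent constant $M$, and conclude by independence of the pieces and of $w_+^i$, $w_-^i$. The only cosmetic difference is that the paper further splits your remainder $S$ into a small-jump part and a finite-activity negative-jump part (forcing the latter to vanish on $[0,1]$), which changes nothing essential.
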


\begin{proof}
First we prove that there exists $M >0$ such that for any $a >0$
\begin{equation}\label{eq:propLevy01}
{\mathcal Q}\left( \left\{ \inf _{t\in [0,1]} w_+^i(t) > - M \right\} \cap \left\{ w_+^i(1) > a\right\} \right) >0, \quad i=1,2,\dots, d. 
\end{equation}
If $\alpha _i =2$, then $w_+^i$ is a Brownian motion and hence (\ref{eq:propLevy01}) holds.
Assume that $\alpha _i \in (0,2)$.
In this case, we note that the L\'evy measure $\nu$ of $w_+^i$ is not trivial and its Gaussian part is to be $0$.
Since $w_+^i$ has positive jumps with a positive probability, we can choose $\varepsilon \in (0,1]$ such that $\nu ((\varepsilon , \infty)) >0$.
For $i=1,2,\dots, d$, let $v_1^i$, $v^i_2$ and $v^i_3$ be independent L\'evy processes associated to the triplets $(0, \nu ( \cdot \cap (\varepsilon,\infty )),0)$, $(0, \nu (\cdot \cap [-1,\varepsilon ]) ,0)$ and $(0, \nu (\cdot \cap (-\infty ,-1)) ,0)$, respectively.
Then, the equality in law
\begin{equation}\label{eq:propLevy11}
\left(w_+^i(t), t\in [0,\infty )\right) \stackrel{\rm law}{=} \left(v_1^i(t) + v_2^i(t) + v_3^i(t) + ct, t\in [0,\infty )\right)
\end{equation}
holds for a constant $c \in {\mathbb R}$.
Note that $v_2^i$ is right-continuous with left limits almost surely. So there exists $M >0$ such that
\begin{equation}\label{eq:propLevy12}
{\mathcal Q}\left( \sup _{t\in [0,1]} \left|v_2^i(t)\right| < M - |c| \right) >0.
\end{equation}
Also, since $\nu ((\varepsilon ,\infty ))>0$, $v_1^i \geq 0$ almost surely and we have
\begin{equation}\label{eq:propLevy14}
{\mathcal Q}\left(v_1^i(1)> a +M + |c| \right) >0
\end{equation}
for any $a >0$. On the other hand, the definition of the L\'evy measure implies $\nu ((-\infty ,-1)) < \infty$, and hence
\begin{equation}\label{eq:propLevy13}
{\mathcal Q}\left(v_3^i(t) =0 \ \mbox{for}\ t\in [0,1] \right) >0.
\end{equation}
From (\ref{eq:propLevy11}), (\ref{eq:propLevy12}), (\ref{eq:propLevy13}), (\ref{eq:propLevy14}) and the independence of $v_1^i$, $v_2^i$ and $v_3^i$, we then obtain
\begin{align*}
&{\mathcal Q}\left( \left\{\inf _{t\in [0,1]} w_+^i(t) > - M \right\} \cap \left\{ w_+^i(1) > a\right\} \right) \\
&\quad \geq {\mathcal Q}\left( \{ v_1^i(1)> a +M + |c| \} \cap \left\{ \inf _{t\in [0,1]} v_2^i(t) > -M + |c| \right\} \cap \left\{ v_3^i(t) =0 \ \mbox{for}\ t\in [0,1]\right\} \right) \\
&\quad = {\mathcal Q}\left( v_1^i(1)> a +M + |c| \right) {\mathcal Q}\left( \inf _{t\in [0,1]} v_2^i(t) > -M + |c| \right) {\mathcal Q}\left(  v_3^i(t) =0 \ \mbox{for}\ t\in [0,1] \right) > 0.
\end{align*}
Thus, we obtain (\ref{eq:propLevy01}). Similarly to above, we also have
\begin{equation}\label{eq:propLevy02}
{\mathcal Q}\left( \left\{ \inf _{t\in [0,1]} w_-^i(t) > - M \right\} \cap \left\{ w_-^i(1) > a\right\} \right) >0.
\end{equation}
Now, on account of (\ref{eq:propLevy01}) and (\ref{eq:propLevy02})
\begin{align*}
&{\mathcal Q}\left( \left\{ \underline{w}^i(0,1) > - M \right\} \cap \left\{ w^i(-1) \wedge w^i(1) > a\right\} \right) \\
&\quad = {\mathcal Q}\left( \left\{ \inf _{t\in [0,1]} w_+^i(x) > - M \right\} \cap \left\{ w_+^i(1) > a\right\} \right)  {\mathcal Q}\left( \left\{ \inf _{t\in [0,1]} w_-^i(t) > - M \right\} \cap \left\{ w_-^i(1) > a\right\} \right) >0
\end{align*}
for any $a>0$ and $i=1,2,\dots, d$.
\end{proof}

From this lemma, we have the following.

\begin{lem}\label{lem:Levy}
Let $i=1,2,\dots ,d$. If $\alpha _i =2$ or both $w_+^i$ and $w_-^i$ have positive jumps with positive probabilities, then there exists $\widehat{a} >0$ such that for any $\varepsilon >0$
\[
{\mathcal Q} \left( \underline{w}^i(1,r^2) >\widehat{a},\ \overline{w}^i(1,r^2) -\underline{w}^i(0,1) < \widehat{a}(1+\varepsilon ) \right) >0.
\]
\end{lem}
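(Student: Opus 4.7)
The plan is to construct, for each prescribed $\varepsilon>0$, an explicit event of positive probability on which both inequalities of the lemma hold. Since $w^i_+$ and $w^i_-$ are independent L\'evy processes, I first reduce to a one-sided problem. Writing $\xi:=w^i_+$, it suffices to build (and symmetrically for $w^i_-$) a positive-probability event $E_\pm$ on which
\[
\inf_{t\in[0,1]}\xi(t) > -\widehat a\varepsilon/4
\quad\text{and}\quad
\xi(t)\in\bigl(\widehat a,\,\widehat a(1+\varepsilon/4)\bigr)\ \ \text{for all } t\in[1,r^2].
\]
By independence, $E_+\cap E_-$ has positive probability, and on this intersection one has $\underline{w}^i(1,r^2)>\widehat a$ as well as $\overline{w}^i(1,r^2)-\underline{w}^i(0,1)\le \widehat a(1+\varepsilon/4)+\widehat a\varepsilon/4=\widehat a(1+\varepsilon/2)<\widehat a(1+\varepsilon)$, as required.

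For the Brownian case $\alpha_i=2$, $\xi$ is a Brownian motion with drift. The support theorem for Brownian motion gives $\mathcal Q\bigl(\sup_{[0,r^2]}|\xi-\phi|<\delta\bigr)>0$ for any continuous $\phi$ with $\phi(0)=0$ and any $\delta>0$. I take $\phi$ that climbs smoothly from $0$ to a value in $\bigl(\widehat a(1+\varepsilon/16),\widehat a(1+\varepsilon/8)\bigr)$ on $[0,1]$ and remains in that band on $[1,r^2]$, and choose $\delta<\widehat a\varepsilon/32$; the resulting tube event is $E_+$.

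For $\alpha_i\in(0,2)$ with $\xi$ having positive jumps, the semi-selfsimilarity $\xi(r\,\cdot)\stackrel{d}{=}r^{\alpha_i}\xi(\cdot)$ forces the L\'evy measure $\nu$ of $\xi$ to satisfy the scale invariance $\nu(r\,\cdot)=r^{-\alpha_i}\nu(\cdot)$. I fix $\widehat a>0$ so that $\nu\bigl((\widehat a(1+\varepsilon/16),\,\widehat a(1+\varepsilon/8))\bigr)>0$ for every $\varepsilon>0$, which one can arrange using the scaling of $\nu$ (placing $\widehat a$ in the positive support of $\nu$). Using the L\'evy--It\^o decomposition $\xi=v_1+v_2+v_3+ct$ from the proof of Lemma \ref{prop:Levy}, where $v_1$ collects the jumps of size $>\widehat a/2$, $v_3$ the jumps below $-1$, and $v_2$ the remaining small jumps, I intersect the three independent events: $v_1$ has exactly one jump in $[0,r^2]$, of time $\tau\in[1-\eta,1)$ for a small $\eta>0$ and size $J\in\bigl(\widehat a(1+\varepsilon/16),\widehat a(1+\varepsilon/8)\bigr)$; $v_3\equiv 0$ on $[0,r^2]$; and $\sup_{t\in[0,r^2]}|v_2(t)+ct|<\widehat a\varepsilon/16$. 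The first event has positive probability by the compound-Poisson structure of $v_1$ together with the choice of $\widehat a$, and the second by the finite rate of $v_3$. On their intersection $\xi(t)=v_2(t)+ct$ lies in $(-\widehat a\varepsilon/16,\widehat a\varepsilon/16)$ on $[0,\tau)$, while $\xi(t)=J+v_2(t)+ct$ lies in the band $(\widehat a,\widehat a(1+\varepsilon/4))$ on $[\tau,r^2]$, so $E_+$ holds.

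The main obstacle is the third event, controlling $\sup_{[0,r^2]}|v_2+ct|$ within a band of width $O(\varepsilon)$ on the long interval $[0,r^2]$. Because $v_2$ carries infinitely many small jumps in the semi-stable regime, this requires a quantitative maximal estimate: I would further truncate $v_2$ using the scaling $\nu(r\,\cdot)=r^{-\alpha_i}\nu(\cdot)$ to push the jump-size cutoff as low as needed so that the high-frequency tail has arbitrarily small $L^2$-norm, and then apply Doob's maximal inequality to control the remainder.
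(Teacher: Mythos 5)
Your reduction to the one-sided statement and your treatment of the Brownian case via the support theorem are fine, but the semi-stable case $\alpha_i\in(0,2)$ has two genuine gaps, both stemming from your decision to fix $\widehat a$ first and then confine the path to a band of width $O(\varepsilon\widehat a)$. First, you need a jump of $v_1$ with size in the thin window $\bigl(\widehat a(1+\varepsilon/16),\widehat a(1+\varepsilon/8)\bigr)$ for \emph{every} $\varepsilon>0$, i.e.\ you need $\nu$ to charge arbitrarily thin intervals accumulating at $\widehat a$ from above. The hypothesis only gives $\nu((0,\infty))>0$, and the discrete scale invariance of a semi-stable L\'evy measure (which holds only for the specific span, not for all dilations) is perfectly compatible with $\nu$ restricted to $(0,\infty)$ being purely atomic with discrete support; in that case every such window is $\nu$-null for small $\varepsilon$ and your first event has probability zero. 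Second, the event $\sup_{[0,r^2]}|v_2(t)+ct|<\widehat a\varepsilon/16$ is a small-deviation (support-theorem) statement, not a maximal-inequality statement: Doob's inequality bounds ${\mathcal Q}(\sup\ge\lambda)$ from above, and after you truncate the small jumps to make the martingale part tiny in $L^2$ you are left with a compound Poisson part plus an effective drift $c'$ that absorbs the compensator of the removed jumps; asking the compound Poisson part to have no jumps leaves the requirement $|c'|r^2<\widehat a\varepsilon/16$, which fails whenever $c'\neq 0$, and $c'$ in fact diverges as the cutoff decreases when $\int_{0<|z|\le1}|z|\,\nu({\rm d}z)=\infty$. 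For spectrally one-sided semi-stable processes the zero path need not be approachable by this route without invoking the precise structure of strictly semi-stable laws, none of which is in your write-up --- and you yourself flag this step as unresolved.

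The paper sidesteps both problems by reversing the roles of $\widehat a$ and $\varepsilon$: it takes $\widehat a\ge 4(M\vee\widetilde M)/\varepsilon$, so that the tolerance $\varepsilon\widehat a/4$ is a constant of size at least $M\vee\widetilde M$ rather than something small. The two events it then needs are $\sup_{t\in[1,r^2]}|w_+^i(t)-w_+^i(1)|<\varepsilon\widehat a/4$, which has positive probability merely because c\`adl\`ag paths are bounded on compacts (so \emph{some} $\widetilde M$ works, hence the larger $\varepsilon\widehat a/4$ works), and $\{\inf_{t\in[0,1]}w_+^i(t)>-M\}\cap\{w_+^i(1)\text{ in a window of width }\varepsilon\widehat a/4\ge M\}$, which follows from Lemma \ref{prop:Levy} and countable additivity. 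No small-deviation estimate and no control of the fine structure of $\nu$ is required. To salvage your construction you would have to either adopt this rescaling of $\widehat a$ or actually prove the support statements you are assuming, which would demand substantially more input from the theory of semi-stable L\'evy processes than the lemma's hypotheses provide.
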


\begin{proof}
It is sufficient to show that there exists $\widehat{a}>0$ such that for any $\varepsilon >0$
\begin{align}
{\mathcal Q} \left( \inf _{t \in [1,r^2]} w_+^i(t) >\widehat{a},\ \sup _{t \in [1,r^2]} w_+^i(x) -\inf _{t\in [0,1]} w_+^i(x) < \widehat{a}(1+\varepsilon )  \right) >0, \quad i=1,2,\dots, d , \label{eq:lemLevy01}
\end{align}
because the proof of (\ref{eq:lemLevy01}) for $(w^i_-(x), x \in (-\infty, 0))$ is almost same.
Since $w_+^i$ is a L\'evy process, $(w_+^i(t)- w_+^i(1), t\in [1,\infty ))$ and $(w_+^i(t), t\in [0,1])$ are independent.
Then, for any $\widehat{a} >0$ and $i=1,2,\dots, d$, we have
\begin{align*}
&{\mathcal Q} \left( \inf _{t \in [1,r^2]} w^i_+(t) >\widehat{a},\ \sup _{t \in [1,r^2]} w^i_+(t) -\inf _{t \in [0,1]} w^i_+(t) < \widehat{a}(1+\varepsilon ) \right)\\
&\quad = {\mathcal Q} \left( \widehat{a}- w^i_+(1)< w^i_+(t) - w^i_+(1) <\widehat{a}(1+\varepsilon ) - w^i_+(1) + \inf _{t\in [0,1]} w^i_+(t)\quad \mbox{for}\ t \in [1,r^2] \right) \\
&\quad \geq {\mathcal Q} \left( \left\{ -\frac{\varepsilon \widehat{a}}{4} < w^i_+(t) - w^i_+(1) <\frac{\varepsilon \widehat{a}}{4} \quad \mbox{for}\ t \in [1,r^2] \right\} \phantom{\left\{ \inf _{t\in [0,1]} w^i_+(t) \right\}} \right. \\
&\quad \hspace{5cm} \left. \cap \left\{ \inf _{t\in [0,1]} w^i_+(t) > - \frac{\varepsilon \widehat{a}}{4} \right\} \cap \left\{\frac{(4+\varepsilon)\widehat{a}}{4}< w^i_+(1)<\frac{(2+\varepsilon )\widehat{a}}{2}\right\} \right) \\
&\quad = {\mathcal Q} \left( -\frac{\varepsilon \widehat{a}}{4} < w^i_+(t) - w^i_+(1) <\frac{\varepsilon \widehat{a}}{4} \quad \mbox{for}\ t \in [1,r^2] \right) \\
&\quad \hspace{4cm} \times {\mathcal Q} \left( \left\{ \inf _{t\in [0,1]} w^i_+(t) > - \frac{\varepsilon \widehat{a}}{4} \right\} \cap \left\{\frac{(4+\varepsilon)\widehat{a}}{4}< w^i_+(1)<\frac{(2+\varepsilon )\widehat{a}}{2}\right\} \right) \\
&\quad \geq {\mathcal Q} \left( \sup _{t\in [1,r^2]} |w^i_+(t) - w^i_+(1)| < \frac{\varepsilon \widehat{a}}{4}\right) \\
&\quad \hspace{4cm} \times {\mathcal Q} \left( \left\{ \inf _{t\in [0,1]} w^i_+(t) > - \frac{\varepsilon \widehat{a}}{4} \right\} \cap \left\{ \frac{(4+\varepsilon)\widehat{a}}{4}< w^i_+(1)<\frac{(2+\varepsilon )\widehat{a}}{2}\right\} \right) .
\end{align*}
Let $M$ be the constant appeared in Lemma \ref{prop:Levy} and let $\widetilde{M} >0$ be a constant satisfying
\[
{\mathcal Q} \left( \sup _{t\in [1,r^2]} \left|w^i_+(t) - w^i_+(1)\right| <\widetilde{M} \right) >0 .
\]
Then, by taking $\widehat{a} \in [4(M \vee \widetilde{M}) /\varepsilon, \infty)$, we have
\begin{equation}\label{ahat1}
{\mathcal Q} \left( \sup _{t\in [1,r^2]}\left|w^i_+(t) - w^i_+(1)\right| < \frac{\varepsilon \widehat{a}}{4}\right) >0 .
\end{equation}
On the other hand, by noting the fact that \eqref{eq:propLevy01} holds for any $a>0$, we can take $\widehat{a} >0$ such that
\begin{align}\label{ahat2}
&{\mathcal Q} \left( \left\{ \inf _{t\in [0,1]} w^i_+(t) > - \frac{\varepsilon\widehat{a}}{4} \right\} \cap \left\{\frac{(4+\varepsilon)\widehat{a}}{4}< w^i_+(1)<\frac{(2+\varepsilon )\widehat{a}}{2}\right\} \right) \nonumber \\
& \qquad \ge {\mathcal Q} \left( \left\{ \inf _{t\in [0,1]} w^i_+(t) > - M \right\} \cap \left\{\frac{(4+\varepsilon)\widehat{a}}{4}< w^i_+(1)<\frac{(2+\varepsilon )\widehat{a}}{2}\right\} \right) >0 .
\end{align}
Therefore, we can conclude the assertion for $\widehat{a}>0$ satisfying both \eqref{ahat1} and \eqref{ahat2}.
\end{proof}

Now we prove Theorem \ref{thm:Levy} by applying Propositions \ref{thm:criterion} and \ref{prop:RE}, and Lemma \ref{lem:Levy}.

\begin{proof}[Proof of Theorem \ref{thm:Levy}]
Let define a random function $w$ by
\[
w(x):= \sum _{i=1}^d w^i \left( x^{(i)}\right) , \quad x=(x^{(1)}, x^{(2)},\dots , x^{(d)}) \in {\mathbb R}^d.
\]
For this $w$, let ${\bf X}^{w}= (X_{w}(t), P_{x}^{w})$ be the diffusion process associated to the Dirichlet form $({\mathcal E}^w, {\mathcal F}^w)$ given by (\ref{eq:DFw}). Then, it is the $d$-dimensional direct products of diffusion processes in products of random environments generated by one-dimensional semi-selfsimilar L\'evy processes $\{(w^i(x), x \in \mathbb{R}), i=1,2,\dots, d\}$, that is,
$$
X_w(t)=\left(X_{w^1}^1(t), X_{w^2}^2(t), \dots, X_{w^d}^d(t)\right).
$$
We remark that the components of $X^{(w)}_t$ are independent for each environment $w$, because
\[
e^{-w(x)} {\rm d}x = \prod _{i=1}^d e^{-w^i(x^{(i)})} {\rm d}x^{(i)}
\]
for $x= (x^{(1)}, x^{(2)}, \dots , x^{(d)}) \in {\mathbb R}^d$.
In view of Propositions \ref{thm:criterion} and \ref{prop:RE}, and Lemma \ref{lem:Levy} we obtain the assertion.
\end{proof}

\section*{Acknowledgment}
The authors would like to thank the referee for his/her valuable comments and suggestions.
This work was supported by JSPS KAKENHI Grant number 17K05304 and 17K14204.

\bibliographystyle{plain}
\bibliography{RandomMedia.bib}

\vskip 0.4cm
\noindent
Daehong~Kim \\
Faculty of Advanced Science and Technology, Kumamoto University, Japan, \\
{\tt daehong@gpo.kumamoto-u.ac.jp}. 
\vskip 0.3cm
\noindent
Seiichiro Kusuoka \\
Department of Mathematics, Graduate School of Science, Kyoto University, Japan, \\
{\tt kusuoka@math.kyoto-u.ac.jp}.

\end{document}